\newtheorem{example*}{Example}
\numberwithin{equation}{section}
\newtheorem{remark}{Remark}
\newcommand{\mb}{\boldsymbol}
\newcommand{\mc}{\mathcal}
\newcommand{\norm}[2]{\left\| #1 \right\|_{#2}}
\newcommand{\reals}{\mathbb{R}}
\newcommand{\<}{\langle}
\renewcommand{\>}{\rangle}
\newcommand{\innerprod}[2]{\left\< #1, #2 \right\>}
\newcommand{\set}[1]{\left\{ #1 \right\}}
\newcommand{\eps}{\varepsilon}
\renewcommand\t{{\ensuremath{\scriptscriptstyle{\top}}}}
\newcommand{\PQ}{\mc P_Q}
\newcommand{\PO}{\mathcal{P}_\Omega}
\newcommand{\Proj}{\mc P}
\renewcommand\t{{\ensuremath{\scriptscriptstyle{\top}}}} 
\newcommand\citep\cite
\newcommand\citet\cite
\numberwithin{equation}{section}
\title{\textbf{Scalable Robust Matrix Recovery:} \\
{Frank-Wolfe Meets Proximal Methods}}
\author{Cun Mu\footnotemark[1]
  \and Yuqian Zhang\footnotemark[2]
  \and John Wright\footnotemark[2]
  \and Donald Goldfarb\footnotemark[1]}
\begin{document}
\maketitle
\slugger{sisc}{xxxx}{xx}{x}{x--x}%slugger should be set to mms, siap, sicomp, sicon, sidma, sima, simax, sinum, siopt, sisc, or sirev

{\renewcommand{\thefootnote}{\fnsymbol{footnote}}%
\footnotetext[1]{Department of Industrial Engineering and Operations Research, Columbia University (\url{cm3052@columbia.edu}, \url{goldfarb@columbia.edu}). DG was funded by NSF Grants DMS-1016571 and CCF-1527809.}
\footnotetext[2]{Department of Electrical Engineering, Columbia University, (\url{yq2409@cs.columbia.edu}, \url{johnwright@ee.columbia.edu}). JW was funded by ONR-N00014-13-0492.}

\begin{abstract}
Recovering matrices from compressive and grossly corrupted observations is a fundamental problem in robust statistics, with rich applications in computer vision and machine learning. In theory, under certain conditions, this problem can be solved in polynomial time via a natural convex relaxation, known as Compressive Principal Component Pursuit (CPCP). However, many existing provably convergent algorithms for CPCP suffer from superlinear per-iteration cost, which severely limits their applicability to large-scale problems. In this paper, we propose provably convergent, scalable and efficient methods to solve CPCP with (essentially) linear per-iteration cost. Our method combines classical ideas from Frank-Wolfe and proximal methods. In each iteration, we mainly exploit Frank-Wolfe to update the low-rank component with rank-one SVD and exploit the proximal step for the sparse term. Convergence results and implementation details are discussed. We demonstrate the practicability and scalability of our approach with numerical experiments on visual data.
\end{abstract}

\begin{keywords}
{robust matrix recovery, compressive principal component pursuit, Frank-Wolfe, conditional gradient, proximal methods, scalability}
\end{keywords}

\begin{AMS}
90C06, 90C25, 90C52
\end{AMS}

\pagestyle{myheadings}
\thispagestyle{plain}
\markboth{C. MU, Y. ZHANG, J. WRIGHT AND D. GOLDFARB}{SCALABLE ROBUST MATRIX RECOVERY}

\section{Introduction}\label{sec:intro}
Suppose that a matrix $\mb M_0 \in \reals^{m\times n}$ is of the form $\mb M_0 = \mb L_0 + \mb S_0 + \mb N_0$, where $\mb L_0$ is a low-rank matrix, $\mb S_0$ is a sparse error matrix, and $\mb N_0$ is a dense noise matrix. Linear measurements
\begin{equation}
\bm b \;=\; \mc A[\mb M_0] \;= \; \big(\innerprod{\mb A_1}{\mb M_0}, \innerprod{\mb A_2}{\mb M_0}, \ldots, \innerprod{\mb A_p}{\mb M_0}   \big)^{\t} \in \reals^p
\end{equation}
are collected, where $\mc A: \; \reals^{m\times n} \to \reals^p$ is the sensing operator, $\mb A_k$ is the sensing matrix for the $k$-th measurement and $\innerprod{\mb A_k}{\mb M_0}  \doteq \mbox{Tr}(\mb M_0^{\t} \mb A_k)$.
%and $\innerprod{\mb A_k}{\mb M_0} = \sum_i\sum_j (\mb A_k)_{ij} (\mb M_0)_{ij}.$
{\em Can we, in a tractable way,  recover $\mb L_0$ and $\mb S_0$ from $\bm b$, given $\mc A$?}

One natural approach is to solve the optimization combining the fidelity term and the structural terms:
\begin{equation}
\min_{\mb L, \mb S} \;\; \frac{1}{2} \norm{\bm b - \mc A [\mb L+\mb S]}{2}^2 + \lambda_L \mbox{rank}(\mb L) + \lambda_S \norm{\mb S}{0}.
\label{eqn:nonconvex_prob}
\end{equation}
Here, $\lambda_L$ and $\lambda_S$ are regularization parameters, and $\norm{\mb S}{0}$ denotes the number of nonzero entries in $\mb S$.
%$\PO[\cdot]$ denotes the
%orthogonal projection onto the linear space of matrices supported on
%$\Omega$, i.e.,
%$
%\PO [\mb M_0] (i,j) = M_{ij}$ if $(i,j) \in \Omega$ and $\PO[\mb M_0](i,j) = 0$ %otherwise, $\lambda_L$ and $\lambda_S$ are regularization parameters, and $\norm{\mb S}{0}$ denotes the number of nonzero entries in $\mb S$.

Unfortunately, problem \eqref{eqn:nonconvex_prob} is nonconvex, and hence is not directly tractable. However, by replacing the $\ell_0$ norm $\norm{\mb S}{0}$ with the $\ell_1$ norm $\norm{\mb S}{1} \doteq \sum_{i=1}^m \sum_{j=1}^n |S_{ij}|$, and replacing the rank  $\mbox{rank}(\mb L)$ with the nuclear norm $\norm{\mb L}{*}$ (defined as the sum of the singular values of $\mb L$), we obtain a natural, tractable, convex relaxation of \eqref{eqn:nonconvex_prob},
\begin{equation}
\min_{\mb L, \mb S} \;\; \frac{1}{2} \norm{\bm b - \mc A [\mb L+\mb S]}{2}^2 + \lambda_L \norm{\mb L}{*} + \lambda_S \norm{\mb S}{1}.
\label{eqn:penalized_prob_first}
\end{equation}
This convex surrogate is sometimes referred to as {\em compressive principal component pursuit (CPCP)} \cite{wright2013compressive}. Equivalently, since $$\set{\;\mb M\in \reals^{m\times n} \;\;\vert\;\; \bm b = \mc A[\mb M]\;}\;\; = \;\; \set{\;\mb M\in \reals^{m\times n} \;\;\vert\;\; \PQ[\mb M] = \PQ[\mb M_0]\;},$$
where $\mc Q \subseteq \reals^{m \times n}$ is a linear subspace spanned by the set of sensing matrices $\set{\mb A_i}_{i=1}^p$, and $\PQ$ denotes the projection operator onto that subspace, we can rewrite problem \eqref{eqn:penalized_prob_first} in the (possibly) more compact form, \footnote{
To transform problem \eqref{eqn:penalized_prob_first} into problem \eqref{eqn:penalized_prob}, simple procedures like Gram-朣chmidt might be invoked.  Despite being equivalent, one formulation might be preferred over the other in practice, depending on the specifications of the sensing operator $\mc A[\cdot]$. In this paper, we will mainly focus on solving problem \eqref{eqn:penalized_prob} and its variants. Our methods, however, are not restrictive to \eqref{eqn:penalized_prob} and can be easily extended to problem \eqref{eqn:penalized_prob_first}.}
\begin{equation}
\min_{\mb L, \mb S} \;\; f(\mb L, \mb S) \doteq \frac{1}{2} \norm{\PQ[\mb L + \mb S - \mb M_0]}{F}^2 + \lambda_L \norm{\mb L}{*} + \lambda_S \norm{\mb S}{1}.
\label{eqn:penalized_prob}
\end{equation}

Recently, CPCP and its close variants have been studied for different sensing operators $\mc A$ (or equivalently different subspaces $\mc Q$). In specific, \cite{chandrasekaran2011rank,candes2011robust,zhou2010stable,hsu2011robust,agarwal2012noisy} consider the case where a subset $\Omega \subseteq \set{1,2,\ldots, m} \times \set{1,2,\ldots, n}$ of the entries of $\mb M_0$ is observed. Then CPCP can be reduced to
\begin{equation}
\min_{\mb L, \mb S} \quad \frac{1}{2} \norm{\PO[\mb L + \mb S -
\mb M_0]}{F}^2 + \lambda_L \norm{\mb L}{*} + \lambda_S \norm{\mb S}{1},
\label{eqn:SPCP}
\end{equation}
where $\PO[\cdot]$ denotes the
orthogonal projection onto the linear space of matrices supported on
$\Omega$, i.e.,
$
\PO [\mb M_0] (i,j) = (\mb M_0)_{ij}$ if $(i,j) \in \Omega$ and $\PO[\mb M_0](i,j) = 0$ otherwise. \cite{wright2013compressive} studies the case where each $\mc A_k$ is an i.i.d. $\mc N(0,1)$ matrix, which is equivalent (in distribution) to saying that we choose a linear subspace $\mc Q$ uniformly at random from the set of all $p$-dimensional subspaces of $\reals^{m \times n}$ and observe $\PQ[\mb M_0]$. Accordingly, all the above provide theoretical guarantees for CPCP, under fairly mild conditions, to produce accurate estimates of $\mb L_0$ and $\PO[\mb S_0]$ (or $\mb S_0$), even when the number of measurements $p$ is substantially less than $mn$.

Inspired by these theoretical results, researchers from different fields have leveraged CPCP to solve many practical problems, including video background modeling \cite{candes2011robust}, batch image alignment \cite{Peng2012-PAMI}, face verification \cite{Zhang2013-ICCV}, photometric stereo \cite{Wu2011-ACCV}, dynamic MRI \cite{Otazo2013-pp}, topic modeling \cite{Min2010-CIKM}, latent variable graphical model learning \cite{chandrasekaran2012latent} and outlier detection and robust Principal Component Analysis \cite{candes2011robust}, to name just a few.

Living in the era of {\em big data}, most of these applications involve large datasets and high dimensional data spaces. Therefore, to fully realize the benefit of the theory, we need {\em provably convergent}  and {\em scalable} algorithms for CPCP. This has motivated much research into the development of first-order methods for problem \eqref{eqn:penalized_prob} and its variants; e.g see \cite{lin2009fast,lin2010augmented,yuan2009sparse,aybat2011fast,tao2011recovering,aybat2012efficient}. These methods, in essence, all exploit a closed-form expression for the proximal operator of the nuclear norm, which involves the singular value decompsition (SVD). Hence, the dominant cost in each iteration is computing an SVD of the same size as the input data. This is substantially more scalable than off-the-shelf interior point solvers such as SDPT3 \cite{tutuncu2003solving}. Nevertheless, the superlinear cost of each iteration has limited the practical applicability of these first-order methods to problems involving several thousands of data points and several thousands of dimensions. The need to compute a sequence of full or partial SVDs is a serious bottleneck for truly large-scale applications.

As a remedy, in this paper, we design more scalable algorithms to solve CPCP that compute only a rank-one SVD in each iteration. Our approach leverages two classical and widely studied ideas -- Frank-Wolfe iterations to handle the nuclear norm, and proximal steps to handle the $\ell_1$ norm. This turns out to be an ideal combination of techniques to solve large-scale CPCP problems. In particular, it yields algorithms that are substantially {\em more scalable} than prox-based first-order methods such as ISTA and FISTA \cite{beck2009fast}, and converge {\em much faster} in practice than a straightforward application of Frank-Wolfe.

The remainder of this paper is organized as follows. Section \ref{sec:prelim} reviews the general properties of the Frank-Wolfe algorithm, and describes several basic building blocks that we will use in our algorithms. Section \ref{sec:FW-P} and Section \ref{sec:FW-T} respectively describe how to modify the Frank-Wolfe algorithm to solve CPCP's {\em norm constrained} version
\begin{equation}
\min_{\mb L, \mb S} \;\; l(\mb L, \mb S) \doteq \frac{1}{2} \norm{\PQ[\mb L + \mb S - \mb M_0]}{F}^2  \quad \mbox{s.t.}\; \norm{\mb L}{*}\le \tau_L, \; \norm{\mb S}{1}\le \tau_S,
\label{eqn:norm_cons_prob_first}
\end{equation}
and the penalized version, i.e. problem \eqref{eqn:penalized_prob}, by incorporating proximal regularization to more effectively handle the $\ell_1$ norm. Convergence results and our implementation details are also discussed. Section \ref{sec:num} presents numerical experiments on large datasets that demonstrate the scalability of our proposed algorithms. In Section \ref{sec:conclusion}, we summarize our contributions and discuss potential future works.

\section{Preliminaries} \label{sec:prelim}

\subsection{Frank-Wolfe method} \label{subsec:FW}
The Frank-Wolfe (FW) method \cite{frank1956algorithm}, also known as the conditional gradient method \cite{levitin1966constrained},  applies to
the general problem of minimizing a differentiable convex function $h$ over a compact, convex domain $\mc D \subseteq \reals^n$:
\begin{equation} \label{eqn:general_constr}
\mbox{minimize} \quad h(\bm x)   \qquad  \mbox{subject to } \quad \bm x \in \mc D \subseteq \reals^n.
\end{equation}
Here, $\nabla h$ is assumed to be $L$-Lipschitz:
\begin{equation}
\forall \, \bm x, \, \mb y \in \mc D, \qquad \norm{\nabla h(\bm x) - \nabla h(\mb y) }{} \le L \norm{\bm x - \mb y }{}.
\end{equation}
Throughout, we let $D = \max_{\bm x,\mb y \in \mc D} \norm{\bm x - \mb y}{}$ denote the diameter of the feasible set $\mc D$.

In its simplest form, the Frank-Wolfe algorithm proceeds as follows. At each iteration $k$, we linearize the objective function $h$ about the current point $\bm x^k$:
\begin{equation}
h(\bm v) \approx h(\bm x^k) + \innerprod{ \nabla h(\bm x^k) }{\bm v - \bm x^k}.
\end{equation}
 We minimize the linearization over the feasible set $\mc D$ to obtain
 \begin{flalign}
 \bm v^k \in \arg \min_{\bm v \in \mc D} \innerprod{\nabla h(\bm x^k)}{\bm v},
 \end{flalign}
 and then take a step in the feasible descent direction $\bm v^k - \bm x^k$:
\begin{equation} \label{eqn:fixed_step_size}
\bm x^{k+1} = \bm x^k + \frac{2}{k+2} ( \bm v^k - \bm x^k ).
\end{equation}
This yields a very simple procedure, which we summarize as Algorithm \ref{alg:Frank-Wolfe}. The particular step size, $\frac{2}{k+2}$, comes from the convergence analysis of the algorithm, which we discuss in more details below.

\begin{algorithm}[tb]
   \caption{Frank-Wolfe method for problem \eqref{eqn:general_constr}}
   \label{alg:Frank-Wolfe}
\begin{algorithmic}[1]
   \STATE {\bfseries Initialization:} $\bm x^0 \in \mc D$;
   \FOR{$k=0,\; 1,\; 2, \; \ldots$}
   \STATE $\bm v^k  \in \mbox{argmin}_{\bm v\in \mc D} \innerprod{\bm v}{\nabla h(\bm x^k)}$;
   \STATE $\gamma = \frac{2}{k+2}$;
   \STATE $\bm x^{k+1} =\bm x^{k}+ \gamma (\bm v^k- \bm x^{k});$ \label{eqn:updating}
   \ENDFOR
\end{algorithmic}
\end{algorithm}

First proposed in \cite{frank1956algorithm}, FW-type methods have been
frequently revisited in different fields.
Recently, they have experienced a resurgence in statistics, machine learning and signal processing, due to their ability to yield highly scalable algorithms for optimization with structure-encouraging norms such as the $\ell_1$ norm and nuclear norm. In particular, if $\bm x$ is a matrix and $\mc D = \set{ \bm x \mid \norm{\bm x}{*} \le \beta }$ is a nuclear norm ball, the subproblem
\begin{equation}
\min_{\bm v \in \mc D}\; \innerprod{ \bm v }{\nabla h(\bm x)}
\end{equation}
can be solved using only the singular vector pair corresponding to the single leading singular value of the matrix $\nabla h(\bm x)$. Thus, at each iteration, we only have to compute a rank-one partial SVD.  This is substantially cheaper than the full/partial SVD exploited in proximal methods \cite{jaggi2010simple,harchaoui2013conditional}. We recommend \cite{jaggi2013revisiting} as a comprehensive survey of the latest developments in FW-type methods.

\begin{algorithm}[hb]
   \caption{Frank-Wolfe method for problem \eqref{eqn:general_constr} with general updating scheme}
\begin{algorithmic}[1]
\label{alg:general_frank_wolfe}
   \STATE {\bfseries Initialization:} $\bm x^0 \in \mc D$;
   \FOR{$k=0,\; 1,\; 2, \; \ldots$}
   \STATE $\bm v^k \in \mbox{argmin}_{\bm v\in \mc D} \innerprod{\bm v}{\nabla h(\bm x^k)}$;
   \STATE $\gamma = \frac{2}{k+2}$ ;
   \STATE Update $\bm x^{k+1}$ to some point in $\mc D$ such that $h(\bm x^{k+1}) \le h(\bm x^{k}+\gamma (\bm v^k- \bm x^{k}));$
   %\STATE $\bm x^{k+1} =\bm x^{k}+\gamma (\bm v^k- \bm x^{k});$
   \ENDFOR
\end{algorithmic}
\end{algorithm}

In the past five decades, numerous variants of Algorithm \ref{alg:Frank-Wolfe} have been proposed and implemented. Many modify Algorithm \ref{alg:Frank-Wolfe} by replacing the simple updating rule \eqref{eqn:fixed_step_size}
with more sophisticated schemes, e.g.,
\begin{equation} \label{eqn:exact_line}
\bm x^{k+1} \; \in \;  \arg \min_{\bm x} \; h(\bm x) \quad   \mbox{s.t.} \;\; \bm x \in \mbox{conv}\{\bm x^k, \; \bm v^k\}
\end{equation}
or
\begin{equation} \label{eqn:exact_line_more}
\bm x^{k+1} \; \in \; \arg \min_{\bm x} \; h(\bm x) \quad   \mbox{s.t.} \;\; \bm x \in \mbox{conv}\{\bm x^k, \; \bm v^k, \; \bm v^{k-1}, \; \ldots, \; \bm v^{k-j} \}.
\end{equation}
The convergence of these schemes can be analyzed simultaneously, using the fact that they produce iterates $\mb x^{k+1}$ whose objective is no greater than that produced by the original Frank-Wolfe update scheme:
$$
h(\mb x^{k+1}) \le h(\mb x^k + \gamma(\mb v^k - \mb x^k)).
$$
 Algorithm \ref{alg:general_frank_wolfe} states a general version of Frank-Wolfe, whose update is only required to satisfy this relationship. It includes as special cases the updating rules \eqref{eqn:fixed_step_size}, \eqref{eqn:exact_line} and \eqref{eqn:exact_line_more}. This flexibility will be crucial for effectively handling the sparse structure in the CPCP problems \eqref{eqn:penalized_prob} and \eqref{eqn:norm_cons_prob_first}.

The convergence of Algorithm \ref{alg:general_frank_wolfe} can be proved using well-established techniques
%for analyzing the convergence of FW-type algorithms
 \cite{harchaoui2013conditional,jaggi2013revisiting,demʹi︠a︡nov1970approximate,Dunn1978432,Patriksson93,zhang2003sequential,Clarkson:2010:CSG:1824777.1824783,freund2014new}. Using these ideas, one can show that it converges at a rate of $O(1/k)$ in function value:
\begin{theorem}\label{thm:fw_primal_conv}
Let $\bm x^\star$ be an optimal solution to \eqref{eqn:general_constr}. For $\{\bm x^k\}$ generated by Algorithm $\ref{alg:general_frank_wolfe}$, we have for $k = 0,\; 1, \;2, \;\ldots,$
%$$
%h(\bm x^{k+1}) - h(\bm x^\star) \le \frac{k}{k+2}\left(h(\bm x^k) - h(\bm x^\star)\right) + \frac{LD^2}{2}\left(\frac{2}{k+2}\right)^2,
%$$
%which by Lemma \ref{lem:recurrence} implies that for $k = 1,\; 2, \;3, \;\ldots,$
\begin{equation}\label{eqn:primal_conv}
h(\bm x^{k})-h(\bm x^\star) \le \frac{2LD^2}{k+2}.
\end{equation}
\end{theorem}
\begin{proof}
For $k = 0,\; 1, \;2, \;\ldots,$ we have
\begin{eqnarray}
h(\bm x^{k+1}) & \le & h(\bm x^k + \gamma (\bm v^k - \bm x^k)) \nonumber\\
&\le& h(\bm x^k) + \gamma \innerprod{\nabla h(\bm x^k)}{\bm v^k-\bm x^k}+\frac{L\gamma^2}{2}\norm{\bm v^k - \bm x^k}{}^2  \nonumber\\
&\le& h(\bm x^k) + \gamma \innerprod{\nabla h(\bm x^k)}{\bm v^k-\bm x^k}+ \frac{\gamma^2 LD^2}{2} \label{eqn:fw_pf_prim_conv_first} \\
&\le&  h(\bm x^k) + \gamma \innerprod{\nabla h(\bm x^k)}{\bm x^\star-\bm x^k}+ \frac{\gamma^2 LD^2}{2}    \nonumber\\
&\le& h(\bm x^k)  + \gamma(h(\bm x^\star)-h(\bm x^k))+\frac{\gamma^2 LD^2}{2}, \label{eqn:fw_pf_prim_conv}
\end{eqnarray}
where the second inequality holds since $\nabla h(\cdot)$ is $L$-Lipschitz continuous; the third line follows because $D$ is the diameter for the feasible set $\mc D$; the fourth inequality follows from $\bm v^k \in \mbox{argmin}_{\bm v\in \mc D} \innerprod{\bm v}{\nabla h(\bm x^k)}$ and $\bm x^\star \in \mc D$; the last one holds since $h(\cdot)$ is convex.

Rearranging terms in \eqref{eqn:fw_pf_prim_conv}, one obtains that for $k = 0,\; 1, \;2, \;\ldots,$
\begin{equation} \label{eqn:fw_pf_prim_conv_2}
h(\bm x^{k+1}) - h(\bm x^\star) \le (1-\gamma)\left(h(\bm x^k) - h(\bm x^\star)\right) + \frac{\gamma^2 LD^2}{2}.
\end{equation}
Therefore, by mathematical induction, it can be verified that
$$
h(\bm x^{k})-h(\bm x^\star) \le \frac{2LD^2}{k+2}, \quad \mbox{for} \;\; k = 1,\; 2, \;3, \;\ldots.
$$
\end{proof}
\begin{remark}
Note that the constant in the rate of convergence depends on the Lipschitz constant $L$ of $h$ and the diameter $\mc D$.
\end{remark}
%This result was perhaps first derived by \cite{demʹi︠a︡nov1970approximate}. For completeness, we provide a proof of Theorem \ref{thm:fw_primal_conv} in the Appendix.

While Theorem \ref{thm:fw_primal_conv} guarantees that Algorithm \ref{alg:general_frank_wolfe} converges at a rate of $O(1/k)$, in practice it is useful to have a more precise bound on the suboptimality at iterate $k$. The surrogate duality gap
\begin{equation}\label{eqn:dual_gap}
d(\bm x^k) = \innerprod{\bm x^k - \bm v^k}{\nabla h(\bm x^k)},
\end{equation}
provides a useful upper bound on the suboptimality $h(\bm x^k) - h(\bm x^\star)$ :
\begin{flalign}
% \nonumber to remove numbering (before each equation)
   h(\bm x^k) - h(\bm x^\star)  &\le -\innerprod{\bm x^\star - \bm x^k}{\nabla h(\bm x^k)} \nonumber \\
                                     &\le -\min_{\bm v} \innerprod{\bm v - \bm x^k}{\nabla h(\bm x^k)}
                                      =  \innerprod{\bm x^k - \bm v^k}{\nabla h(\bm x^k)}
                                      = d(\bm x^k).
\end{flalign}
This was first proposed in \cite{frank1956algorithm} and later \cite{jaggi2013revisiting} showed that $d(\bm x^k) = O(1/k)$.
Next, we provide a refinement of this result, using ideas from \cite{jaggi2013revisiting,Clarkson:2010:CSG:1824777.1824783}:
%In the Appendix, we prove the following refinement of this result, using ideas from \cite{jaggi2013revisiting,Clarkson:2010:CSG:1824777.1824783}:
\begin{theorem} \label{thm:fw_dual}
Let $\{\bm x^k\}$ be the sequence generated by Algorithm $\ref{alg:general_frank_wolfe}$. Then for any $K\ge 1$, there exists $1\le \tilde k \le K$ such that
\begin{equation}
d(\bm x^{\tilde k}) \le \frac{6LD^2}{K+2}.
\end{equation}
\end{theorem}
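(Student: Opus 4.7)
The plan is to sum a standard Frank-Wolfe per-iteration descent inequality over a window consisting of the last roughly $K/2$ iterations, and then to let $\tilde k$ be the window index achieving the smallest $d(\bm x^{\tilde k})$. The windowing trick is essential: if one instead sums over all $1 \le k \le K$, the step sizes $\gamma_k = 2/(k+2)$ produce only a logarithmic lower bound on $\sum \gamma_k$, yielding an $O(1/\log K)$ bound; restricting to iterates with comparable step sizes recovers the desired $O(1/K)$ rate.

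Step 1 is to derive the familiar Frank-Wolfe descent inequality. Using $L$-smoothness of $h$ along the segment from $\bm x^k$ to $\bm x^k + \gamma_k(\bm v^k - \bm x^k)$, together with $\|\bm v^k - \bm x^k\| \le D$ and the guarantee in Algorithm \ref{alg:general_frank_wolfe} that $h(\bm x^{k+1}) \le h(\bm x^k + \gamma_k(\bm v^k - \bm x^k))$, I obtain
\begin{equation}
\gamma_k \, d(\bm x^k) \;\le\; \bigl[h(\bm x^k) - h(\bm x^{k+1})\bigr] + \frac{\gamma_k^2 L D^2}{2}.
\end{equation}

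Step 2 is to fix $K_0 = \lceil K/2 \rceil$, let $\tilde k \in \arg\min_{K_0 \le k \le K} d(\bm x^k)$, and sum the inequality over $k = K_0, \ldots, K$. The $h$-differences telescope, leaving
\begin{equation}
d(\bm x^{\tilde k}) \sum_{k=K_0}^{K} \gamma_k \;\le\; \bigl[h(\bm x^{K_0}) - h(\bm x^\star)\bigr] + \frac{L D^2}{2}\sum_{k=K_0}^{K} \gamma_k^2.
\end{equation}
Theorem \ref{thm:fw_primal_conv} bounds the first bracket by $2LD^2/(K_0+2)$. For the step-size sums I would use the pointwise estimate $\gamma_k^2 \le 4/[(k+1)(k+2)]$, which telescopes to at most $4/(K_0+1)$, together with the integral lower bound $\sum_{k=K_0}^{K} 2/(k+2) \ge 2\ln\bigl((K+3)/(K_0+2)\bigr)$. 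Substituting these into the displayed inequality produces a bound of the form $d(\bm x^{\tilde k}) \le C \cdot L D^2/(K+2)$.

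The main obstacle is driving the constant $C$ down to $6$. A direct application of the estimates above gives a constant close to $4/\ln 2 \approx 5.77$ asymptotically, so the finite-$K$ regime may require either a minor shift of the window endpoint $K_0$, a slightly sharper telescoping bound on $\sum \gamma_k^2$, or separate handling of a few small $K$ using the trivial bound $d(\bm x^k) \le D \|\nabla h(\bm x^k)\|$. This is precisely the style of careful accounting carried out in \cite{jaggi2013revisiting,Clarkson:2010:CSG:1824777.1824783}, of which the stated theorem is the promised refinement.
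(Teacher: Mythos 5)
Your overall strategy is sound and is, at its core, the same idea as the paper's: both arguments restrict attention to a window consisting of (roughly) the last half of the iterations, combine the per-step inequality $\gamma_k\, d(\bm x^k) \le h(\bm x^k) - h(\bm x^{k+1}) + \tfrac{1}{2}\gamma_k^2 L D^2$ with the primal bound of Theorem \ref{thm:fw_primal_conv} applied at the start of the window, and then control the two step-size sums. Where you diverge is in the bookkeeping. The paper argues by contradiction (assume $d(\bm x^k) > 3C/B$ with $C = 2LD^2$ and $B = K+2$ throughout the window $\{\lceil B/2\rceil-1,\ldots,K\}$), lower-bounds the harmonic sum by the crude term-count estimate $\sum_{j=a}^{B} 1/j \ge (B-a+1)/B$ rather than by a logarithm, and reduces everything to the sign of the quadratic $\phi(\mu) = 2 - 6\mu(1-\mu) - \mu$ at $\mu = \lceil B/2\rceil/B$; since $\phi$ vanishes at $1/2$ and $2/3$ and $\mu$ always lies in $[1/2,2/3]$, the hypothesis forces $h(\bm x^{K+1}) - h(\bm x^\star) < 0$, a contradiction. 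That algebraic arrangement is precisely what makes the constant $6$ come out uniformly in $K$ with no case analysis.

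As written, your proof has a genuine (though closable) gap, which you yourself flag: the constant $6$ is not actually established. Concretely, with your stated estimates $\sum_{k=K_0}^K \gamma_k^2 \le 4/(K_0+1)$ and $\sum_{k=K_0}^K \gamma_k \ge 2\ln\bigl((K+3)/(K_0+2)\bigr)$, the case $K=1$ (so $K_0=1$, a one-term window) yields $d(\bm x^1) \le \bigl(\tfrac{2}{3}+1\bigr)LD^2\big/\bigl(2\ln\tfrac{4}{3}\bigr) \approx 2.9\,LD^2$, which exceeds the required $6LD^2/3 = 2LD^2$; several other small values of $K$ (up to roughly $K=10$) fail similarly. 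The fix within your own framework is to retain the full telescoped form $\sum_{k=K_0}^K \gamma_k^2 \le 4\bigl(\tfrac{1}{K_0+1} - \tfrac{1}{K+2}\bigr)$: the negative term you discard is not negligible when the window is short, and with it kept the resulting bound stays below $6LD^2/(K+2)$ for every $K \ge 1$ (the worst case is near $K=1$, and the asymptotic constant is about $3/\ln 2 \approx 4.33$ rather than $6$). But that verification is exactly the ``careful accounting'' you defer, so the proposal as it stands proves $d(\bm x^{\tilde k}) = O(LD^2/K)$ rather than the stated inequality with the stated constant.
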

\begin{proof}
For notational convenience, we denote $h^k \doteq h(\bm x^k)$, $\Delta^k \doteq h(\bm x^k) - h(\bm x^\star)$, $d^k \doteq d(\bm x^k)$, $C \doteq 2LD^2$, $B \doteq K+2$, $\hat k \doteq \lceil \frac{1}{2}B\rceil-1$, $\mu \doteq \lceil \frac{1}{2}B\rceil/B$.

Suppose on the contrary that
\begin{equation}\label{eqn:fw_pf_dual_conv_2}
d^k > \frac{3C}{B},  \quad \mbox{for all} \;\;k \in \set{\lceil \frac{1}{2}B\rceil-1, \; \lceil \frac{1}{2}B\rceil, \; \ldots, \;K}.
\end{equation}
From \eqref{eqn:fw_pf_prim_conv_first}, we know that for any $k\ge 1$
\begin{equation}\label{eqn:fw_pf_dual_conv_1}
\Delta^{k+1} \le \Delta^k + \gamma\innerprod{\nabla h(\bm x^k)}{\bm v^k-\bm x^k}+\frac{\gamma^2 LD^2}{2} = \Delta^k - \frac{2d^k}{k+2}  + \frac{C}{(k+2)^2}.
\end{equation}
Therefore, by using \eqref{eqn:fw_pf_dual_conv_1} repeatedly, one has
\begin{eqnarray}
\Delta^{K+1} &\le& \Delta^{\hat k} - \sum_{k = \hat k}^K \frac{2d^k}{k+2}  + \sum_{k = \hat k}^K \frac{C}{(k+2)^2} \nonumber\\
&<& \Delta^{\hat k} - \frac{6C}{B} \sum_{k = \hat k}^K \frac{1}{k+2} + C \sum_{k = \hat k}^K \frac{1}{(k+2)^2} \nonumber\\
&=& \Delta^{\hat k} - \frac{6C}{B} \sum_{k = \hat k+2}^B \frac{1}{k}+ C \sum_{k = \hat k+2}^B \frac{1}{k^2} \nonumber\\
&\le& \nonumber \frac{C}{\mu B} - \frac{6C}{B}\cdot\frac{B-\hat k -1}{B} + C\cdot \frac{B-\hat k -1}{B(\hat k+1)}\\
&=& \frac{C}{\mu B} - \frac{6C}{B}(1-\mu)+\frac{C}{B}\frac{1-\mu}{\mu} \nonumber\\
&=& \frac{C}{\mu B}\left(2 - 6\mu(1-\mu) - \mu  \right) \label{eqn:fw_pf_dual_conv}
\end{eqnarray}
where the second line is due to our assumption \eqref{eqn:fw_pf_dual_conv_2};
the fourth line holds since $\Delta^{\hat k} \le \frac{C}{\hat k+2}$ by Theorem 1, and $\sum_{k=a}^b \frac{1}{k^2} \le \frac{b-a+1}{b(a-1)}$ for any $b \ge  a > 1$.

Now define $\phi(x) = 2 - 6x(1-x) - x$. Clearly $\phi(\cdot)$ is convex. Since $\phi(\frac{1}{2}) = \phi(\frac{2}{3}) = 0$, we have $\phi(x) \le 0$ for any $x \in [\frac{1}{2},\frac{2}{3}]$. As $\mu = \lceil \frac{1}{2}B\rceil/B \in [\frac{1}{2},\frac{2}{3}]$, from \eqref{eqn:fw_pf_dual_conv}, we have
$$
\Delta^{K+1} = h(\bm x^{K+1}) - h(\bm x^\star) < \frac{C}{\mu B} \phi(\mu) \le 0,
$$
which is a contradiction.
\end{proof}
\begin{remark}
The convergence rate for the duality gap matches the one for $h(\bm x^k) - h(\bm x^\star)$ (see \eqref{eqn:primal_conv}), which suggests that the upper bound $d(\bm x^k)$ can serve as a practical stopping criterion.
\end{remark}

For our problem, the main computational burden in Algorithms \ref{alg:Frank-Wolfe} and \ref{alg:general_frank_wolfe} will be solving the linear subproblem $\min_{\bm v\in \mc D} \innerprod{\bm v}{\nabla h(\bm x^k)}$, \footnote{In some situations, we can significantly reduce this cost by solving this problem inexactly \cite{Dunn1978432,jaggi2013revisiting}. Our algorithms and results can also tolerate inexact step calculations; we omit the discussion here for simplicity.} i.e. minimizing linear functions over the unit balls for $\norm{\cdot}{*}$ and $\norm{\cdot}{1}$. Fortunately, both of these operations have simple closed-form solutions, which we will describe in the next section.

\subsection{Optimization oracles}\label{subsec:oracles}
We now describe several optimization oracles involving the
$\ell_1$ norm and the nuclear norm, which serve as the main building blocks for our methods. These oracles have computational costs that are (essentially) linear in the size of the input.
\paragraph{\textbf{Minimizing a linear function over the nuclear norm ball}} Since the dual norm of the nuclear norm is the operator norm, i.e., $\norm{\mb Y}{}= \max_{\norm{\mb X}{*} \le 1}
  \innerprod{\mb Y}{\mb X}$, the optimization problem
  \begin{equation}
     \mbox{minimize}_{\mb X} \;\; \<\mb Y, \mb X \>  \qquad \mbox{subject to} \; \norm{\mb X}{*} \le 1
  \end{equation}
  has optimal value $-\norm{\mb Y}{}$. One minimizer is the rank-one matrix $\mb X^\star = -\bm
  u \bm v^\top$, where $\bm u$ and $\bm v$ are the left- and right-
  singular vectors corresponding to the leading singular value of $\mb Y$, and can be efficiently computed (e.g. using power method).

  \paragraph{\textbf{Minimizing a linear function over the $\ell_1$ ball}}   Since the dual norm of the $\ell_1$ norm is the $\ell_\infty$ norm, i.e.,  $\norm{\mb Y}{\infty}:= \max_{(i,j)} |Y_{ij}| = \max_{\norm{\mb
  X}{1} \le 1} \innerprod{\mb Y}{\mb X}$, the optimization problem
  \begin{equation} \label{eqn:dual_norm_l1}
     \mbox{minimize}_{\mb X} \;\; \<\mb Y, \mb X \>  \qquad \mbox{subject to} \; \norm{\mb X}{1} \le 1
  \end{equation}
  has optimal value $-\norm{\mb Y}{\infty}$. One minimizer is the one-sparse matrix 
  \[
  \mb X^\star = - \mbox{sgn}(Y_{i^\star j^\star}) \bm e_{i^\star} \bm e_{j^\star}^{\t},
  \]
  where $(i^\star, j^\star)\in \arg \max_{(i,j)} |Y_{ij}|$; i.e. $\mb X^\star$ has exactly one nonzero element.

  \paragraph{\textbf{Projection onto the $\ell_1$-ball}} To effectively handle the sparse term in the norm constrained problem \eqref{eqn:norm_cons_prob_first}, we will need to modify the Frank-Wolfe algorithm by incorporating additional projection steps. For any $\mb Y \in \reals^{m \times n }$ and $\beta > 0$, the projection onto the $\ell_1$-ball:
    \begin{equation} \label{eqn:l_1_proj_oracle}
    \Proj_{\norm{\cdot}{1}\le \beta}[\mb Y]  =  \arg \min_{\norm{\mb X}{1} \le \beta} \;\; \frac{1}{2} \norm{\mb X- \mb Y}{F}^2,
  \end{equation}
  can be easily solved with $O\left(mn (\log m+\log n)\right)$ cost \cite{duchi2008efficient}. Moreover, a divide and conquer algorithm, achieving
  linear cost in expectation to solve \eqref{eqn:l_1_proj_oracle}, has also been proposed in \cite{duchi2008efficient}.

  \paragraph{\textbf{Proximal mapping of $\ell_1$ norm}} To effectively handle the sparse term arising in problem \eqref{eqn:penalized_prob}, we will need to modify the Frank-Wolfe algorithm by incorporating additional proximal steps. For any $\mb Y \in \reals^{m \times n }$ and $\lambda > 0$, the proximal mapping of $\ell_1$ norm has the following closed-form expression
  \begin{equation}
   \mc T_{\lambda}[\mb Y] \; = \; \arg \min_{\mb X \in \reals^{m\times n}} \;\; \frac{1}{2} \norm{\mb X- \mb Y}{F}^2 + \lambda \norm{\mb X}{1},
  \end{equation}
  where $\mc T_{\lambda}: \reals \to \reals$ denotes the soft-thresholding operator $\mc
  T_{\lambda}(x) = \mbox{sgn}(x) \max\{|x|- \lambda, 0\}$, and extension to matrices is obtained by applying the scalar operator $\mc T_\lambda(\cdot)$ to each element.

\section{FW-P Method for Norm Constrained Problem}\label{sec:FW-P}
In this section, we develop scalable algorithms for the norm-constrained compressive principal component pursuit problem,
\begin{equation}
% \nonumber to remove numbering (before each equation)
 \min_{\mb L, \mb S} \; l(\mb L, \mb S)=\frac{1}{2} \norm{\PQ [\mb L+\mb S - \mb M]}{F}^2 \;\;\;  \mbox{s.t.} \;\; \; \norm{\mb L}{*} \le \tau_L, \; \norm{\mb S}{1} \le \tau_S.
\label{eqn:norm_cons_prob_2}
\end{equation}
We first describe a straightforward application of the Frank-Wolfe method to this problem. We will see that although it has relatively cheap iterations, it converges very slowly on typical numerical examples, because it only makes a one-sparse update to the sparse term $\mb S$ at a time. We will show how to remedy this problem by augmenting the FW iteration with an additional proximal step (essentially a projected gradient step) in each iteration, yielding a new algorithm which updates $\mb S$ much more efficiently. Because it combines Frank-Wolfe and projection steps, we will call this new algorithm Frank-Wolfe-Projection (FW-P).

\paragraph{Properties of the objective and constraints.} To apply Frank-Wolfe to \eqref{eqn:norm_cons_prob_2}, we first note that the objective $l(\mb L,\mb S)$ in \eqref{eqn:norm_cons_prob_2} is differentiable, with
\begin{eqnarray}
\nabla_{\mb L} l(\mb L, \mb S) &=& \PQ[ \mb L + \mb S - \mb M ] \label{eqn:grad-f-L}\\
\nabla_{\mb S} l(\mb L, \mb S) &=& \PQ[ \mb L + \mb S - \mb M ]. \label{eqn:grad-f-S}
\end{eqnarray}
Moreover, the following lemma shows that the gradient map $\nabla l(\mb L, \mb S) = (\nabla_{\mb L} l, \nabla_{\mb S} l)$ is 2-Lipschitz:
\begin{lemma}\label{lem:lipschitz_p} For all $(\mb L,\mb S)$ and $(\mb L',\mb S')$, we have  $\norm{\nabla l(\mb L, \mb S) - \nabla l(\mb L',\mb S')}{F} \le 2 \norm{ (\mb L,\mb S) - (\mb L',\mb S') }{F}$.
\end{lemma}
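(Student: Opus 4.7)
The plan is to directly manipulate the gradient difference using the explicit formulas \eqref{eqn:grad-f-L} and \eqref{eqn:grad-f-S}, leveraging two elementary facts: that $\PQ$ is an orthogonal projection (hence non-expansive in Frobenius norm) and the parallelogram-type inequality $\|\mb A + \mb B\|_F^2 \le 2(\|\mb A\|_F^2 + \|\mb B\|_F^2)$. The factor of $2$ in the Lipschitz constant will come from combining one factor of $\sqrt{2}$ from the fact that the gradient has two identical block components and another factor of $\sqrt{2}$ from the inequality just mentioned.

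Concretely, first I would write $\Delta \mb L = \mb L - \mb L'$ and $\Delta \mb S = \mb S - \mb S'$, and use linearity of $\PQ$ to obtain
\begin{equation}
\nabla l(\mb L,\mb S) - \nabla l(\mb L',\mb S') \;=\; \bigl(\PQ[\Delta \mb L + \Delta \mb S],\; \PQ[\Delta \mb L + \Delta \mb S]\bigr),
\end{equation}
so that, taking Frobenius norm of this pair,
\begin{equation}
\norm{\nabla l(\mb L,\mb S) - \nabla l(\mb L',\mb S')}{F}^2 \;=\; 2\,\norm{\PQ[\Delta \mb L + \Delta \mb S]}{F}^2.
\end{equation}

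Next I would invoke $\|\PQ[\mb X]\|_F \le \|\mb X\|_F$ and the elementary inequality $\|\Delta \mb L + \Delta \mb S\|_F^2 \le 2(\|\Delta \mb L\|_F^2 + \|\Delta \mb S\|_F^2)$ to bound the right-hand side by $4(\|\Delta \mb L\|_F^2 + \|\Delta \mb S\|_F^2) = 4\,\|(\mb L,\mb S) - (\mb L',\mb S')\|_F^2$. Taking square roots then yields the claimed Lipschitz bound with constant $2$.

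There is no real obstacle here — the lemma is a routine computation. The only point that merits care is bookkeeping the convention for the norm on the product space: the statement implicitly uses $\|(\mb L,\mb S)\|_F^2 = \|\mb L\|_F^2 + \|\mb S\|_F^2$ both for the gradient pair and for the argument pair, and it is this choice (rather than, say, summing norms separately) that makes the constant exactly $2$ and allows the two factors of $\sqrt{2}$ to multiply to give the final bound.
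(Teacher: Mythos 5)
Your proposal is correct and follows essentially the same route as the paper's proof: use linearity of $\PQ$ to cancel $\mb M$, pick up a factor of $2$ from the two identical gradient blocks, apply non-expansiveness of the orthogonal projection, and finish with the inequality $\norm{\mb A + \mb B}{F}^2 \le 2(\norm{\mb A}{F}^2 + \norm{\mb B}{F}^2)$. Your remark about the product-space norm convention is a fair point of bookkeeping, but there is no substantive difference from the paper's argument.
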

\begin{proof} From \eqref{eqn:grad-f-L} and \eqref{eqn:grad-f-S}, we have
\begin{eqnarray*}
\norm{ \nabla l(\mb L,\mb S) - \nabla l(\mb L',\mb S') }{F}^2 &=& 2 \norm{ \PQ[ \mb L + \mb S - \mb M ] - \PQ[ \mb L' + \mb S' - \mb M ] }{F}^2 \\
&=& 2 \norm{ \PQ[ \mb L + \mb S ] - \PQ[ \mb L' + \mb S' ] }{F}^2 \\
&\le& 2 \norm{ \mb L + \mb S - \mb L' - \mb S' }{F}^2 \\
&\le& 4 \norm{\mb L - \mb L'}{F}^2 + 4 \norm{\mb S - \mb S'}{F}^2 \\
&=& 4 \norm{ (\mb L,\mb S) - (\mb L',\mb S') }{F}^2,
\end{eqnarray*}
which implies the result.
\end{proof}

\noindent The feasible set in \eqref{eqn:norm_cons_prob_2} is compact. The following lemma bounds its diameter $D$:
\begin{lemma}\label{lem:diameter_p} The feasible set $\mc D = \set{ (\mb L,\mb S) \mid \norm{\mb L}{*} \le \tau_L, \; \norm{\mb S}{1} \le \tau_S }$ has diameter $D \le 2 \sqrt{ \tau_L^2 + \tau_S^2 }$.
\end{lemma}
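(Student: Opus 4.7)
The plan is to bound the distance between any two feasible points $(\mathbf{L}, \mathbf{S})$ and $(\mathbf{L}', \mathbf{S}')$ in the product Frobenius norm $\sqrt{\|\mathbf{L}-\mathbf{L}'\|_F^2 + \|\mathbf{S}-\mathbf{S}'\|_F^2}$ by relating the Frobenius norm to the constraining norms on each block and then applying the triangle inequality.

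First I would record the two norm-comparison inequalities that do all of the work: for any matrix $\mathbf{L}$, $\|\mathbf{L}\|_F \le \|\mathbf{L}\|_*$ (because the Frobenius norm is the $\ell_2$ norm of the singular values while the nuclear norm is their $\ell_1$ norm), and for any matrix $\mathbf{S}$, $\|\mathbf{S}\|_F \le \|\mathbf{S}\|_1$ (because the Frobenius norm is the $\ell_2$ norm of the entries while $\|\cdot\|_1$ is their $\ell_1$ norm). Combined with feasibility, this yields $\|\mathbf{L}\|_F, \|\mathbf{L}'\|_F \le \tau_L$ and $\|\mathbf{S}\|_F, \|\mathbf{S}'\|_F \le \tau_S$.

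Next I would apply the triangle inequality in the Frobenius norm blockwise:
\begin{equation*}
\|\mathbf{L} - \mathbf{L}'\|_F \le \|\mathbf{L}\|_F + \|\mathbf{L}'\|_F \le 2\tau_L, \qquad \|\mathbf{S} - \mathbf{S}'\|_F \le 2\tau_S.
\end{equation*}
Squaring and adding, the product-space distance satisfies
\begin{equation*}
\|(\mathbf{L},\mathbf{S}) - (\mathbf{L}',\mathbf{S}')\|_F \;=\; \sqrt{\|\mathbf{L}-\mathbf{L}'\|_F^2 + \|\mathbf{S}-\mathbf{S}'\|_F^2} \;\le\; \sqrt{4\tau_L^2 + 4\tau_S^2} \;=\; 2\sqrt{\tau_L^2 + \tau_S^2}.
\end{equation*}
Taking the maximum over feasible pairs gives the claimed bound on $D$.

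There is essentially no obstacle here — the only thing to be careful about is matching the norm used in the Frank-Wolfe preliminaries (the Frobenius norm on the product space $(\mathbf{L},\mathbf{S})$, consistent with the Lipschitz bound in Lemma~\ref{lem:lipschitz_p}) and correctly invoking the elementary $\ell_2 \le \ell_1$ comparisons for both the entries and the singular values. No tightness argument is needed since the lemma only asserts an upper bound on $D$.
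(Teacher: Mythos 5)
Your proof is correct and follows essentially the same route as the paper's: triangle inequality blockwise, the comparisons $\norm{\cdot}{F} \le \norm{\cdot}{*}$ and $\norm{\cdot}{F} \le \norm{\cdot}{1}$, and then summing the squared bounds $4\tau_L^2 + 4\tau_S^2$. No gaps.
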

\begin{proof}
For any $\mb Z = (\mb L, \mb S)$ and $\mb Z' = (\mb L', \mb S') \in \mc D$,
\begin{flalign} %\label{eqn:nc_diameter}
\norm{\mb Z - \mb Z'}{F}^2 &= \norm{\mb L-\mb L'}{F}^2+\norm{\mb S-\mb S'}{F}^2 \le (\norm{\mb L}{F}+\norm{\mb L'}{F})^2+(\norm{\mb S}{F}+\norm{\mb S'}{F})^2 \nonumber \\
&\le (\norm{\mb L}{*}+\norm{\mb L'}{*})^2+(\norm{\mb S}{1}+\norm{\mb S'}{1})^2 \le 4\tau_L^2 + 4\tau_S^2.
\end{flalign}
\end{proof}
%In this section, we describe our FW-P method for the norm-constrained
%problem \eqref{eqn:norm_cons_prob}, which consists of an FW step and a
%projection (P) step in each iteration. We also establish an $O(1/k)$ convergence result for the FW-P method.

\subsection{Frank-Wolfe for problem \eqref{eqn:norm_cons_prob_2}}

Since \eqref{eqn:norm_cons_prob_2} asks us to minimize a convex, differentiable function with Lipschitz gradient over a compact convex domain, the Frank-Wolfe method in Algorithm \ref{alg:Frank-Wolfe} applies. It generates a sequence of iterates $\bm x^k = (\mb L^k, \mb S^k)$. Using the expression for the gradient in \eqref{eqn:grad-f-L}-\eqref{eqn:grad-f-S}, at each iteration, the step direction $\bm v^k = (\mb V_L^k, \mb V_S^k)$ is generated by solving the linearized subproblem
\begin{eqnarray}\label{eqn:nc_v}
% \nonumber to remove numbering (before each equation)
\left( \begin{array}{ccc} \mb V_L^k \\ \mb V_S^k \end{array} \right) \in \arg \min& \left\<\left( \begin{array}{ccc} \PQ[\mb L^k + \mb S^k - \mb M] \\ \PQ[\mb L^k + \mb S^k - \mb M] \end{array} \right),
\left( \begin{array}{ccc} \mb V_L \\ \mb V_S \end{array} \right)\right\> \label{eqn:subprob_1}\\
        \mbox{s.t.}& \norm{\mb V_L}{*} \le \tau_L, \;\; \norm{\mb V_S}{1} \le \tau_S,  \nonumber
\end{eqnarray}
which decouples into two independent subproblems:
\begin{eqnarray*}
% \nonumber to remove numbering (before each equation)
\mb V_L^k &\in& \arg \min_{\norm{\mb V_L}{*} \le \tau_L}  \< \PQ[\mb L^k + \mb S^k - \mb M],\;  \mb V_L\>,\\
\bm V_S^k &\in& \arg \min_{\norm{\mb V_S}{1} \le \tau_S}  \< \PQ[\mb L^k + \mb S^k - \mb M],\;  \mb V_S\>.
\end{eqnarray*}
These subproblems can be easily solved by exploiting the linear optimization
oracles introduced in Section \ref{subsec:oracles}. In particular,
\begin{eqnarray}
\mb V_L^k &=& -\tau_L \bm u^k (\bm v^k)^{\t}, \label{eqn:V-step} \\
\mb V_S^k &=& -\tau_S \cdot \delta^k_{i^\star j^\star}\cdot \bm e^k_{i^\star} \mb (\bm e^k_{j^\star})^{\t}, \label{eqn:S-step}
\end{eqnarray}
where $\mb u^k$ and $\mb v^k$ are leading left- and right- singular vectors of $\PQ[ \mb L^k + \mb S^k - \mb M]$ and $(i^\star,j^\star)$ is the  of the largest element of $\PQ[\mb L^k + \mb S^k - \mb M ]$ in magnitude and $\delta^k_{ij}:=\mbox{sgn}\left[ \left(\mc P_{\mc Q} \left[ \mb L^k + \mb S^k - \mb M \right] \right)_{ij} \right]$. Algorithm \ref{alg:FW_naive_constr} gives the Frank-Wolfe method specialized to problem \eqref{eqn:norm_cons_prob_2}.

\begin{algorithm}[h]
   \caption{Frank-Wolfe method for problem \eqref{eqn:norm_cons_prob_2}}
   \label{alg:FW_naive_constr}
\begin{algorithmic}[1]
   \STATE {\bfseries Initialization:} $\mb L^0 = \mb S^0 = \mb 0;$
   \FOR{$k=0,\; 1,\; 2, \; \cdots$}
   \STATE $\mb D_L^k \in \arg \min_{\norm{\mb D_L}{*}\le 1} \< \PQ[\mb L^k + \mb S^k - \mb M], \; \mb D_L\>$; $\mb V_L^k = \tau_L \mb D_L^k$;
   \STATE $\mb D_S^k \in \arg \min_{\norm{\mb D_S}{1}\le 1} \< \PQ[\mb L^k + \mb S^k - \mb M], \; \mb D_S\>$; $\mb V_S^k = \tau_S \mb D_S^k$;
   \STATE $\gamma = \frac{2}{k+2}$;
   \STATE$\mb L^{k+1} = \mb L^k + \gamma(\mb V_L^k - \mb L^k);$
   \STATE$\mb S^{k+1} = \mb S^k + \gamma(\mb V_S^k - \mb S^k);$
   \ENDFOR
\end{algorithmic}
\end{algorithm}

The major advantage of Algorithm \ref{alg:FW_naive_constr} lies in the simplicity of the update rules \eqref{eqn:V-step}-\eqref{eqn:S-step}. Both have closed form, and both can be computed in time (essentially) linear in the size of the input. Because $\mb V_L^k$ is rank-one, the algorithm can be viewed as performing a sequence of rank one updates.

The major disadvantage of Algorithm \ref{alg:FW_naive_constr} is that $\mb S$ has only a one-sparse update at each iteration, since $\mb V_S^k = -\tau_S \bm e^k_{i^\star} \mb (\bm e^k_{j^\star})^{\t}$ has only one nonzero entry. This is a significant disadvantage in practice, as the optimal $\mb S^\star$ may have a relatively large number of nonzero entries. Indeed, in theory, the CPCP relaxation works even when a constant fraction of the entries in $\mb S_0$ are nonzero. In applications such as foreground-background separation, the number of nonzero entries in the target sparse term can be quite large. The dashed curves in Figure \ref{fig:synthetic} show the effect of this on the practical convergence of the algorithm, on a simulated example of size $1,000 \times 1,000$, in which about $1\%$ of the entries in the target sparse matrix $\mb S_0$ are nonzero. As shown, the progress is quite slow.

\begin{figure}[t]
\centerline{
\begin{minipage}{2.75in}
\centerline{\includegraphics[width= 5.1in]{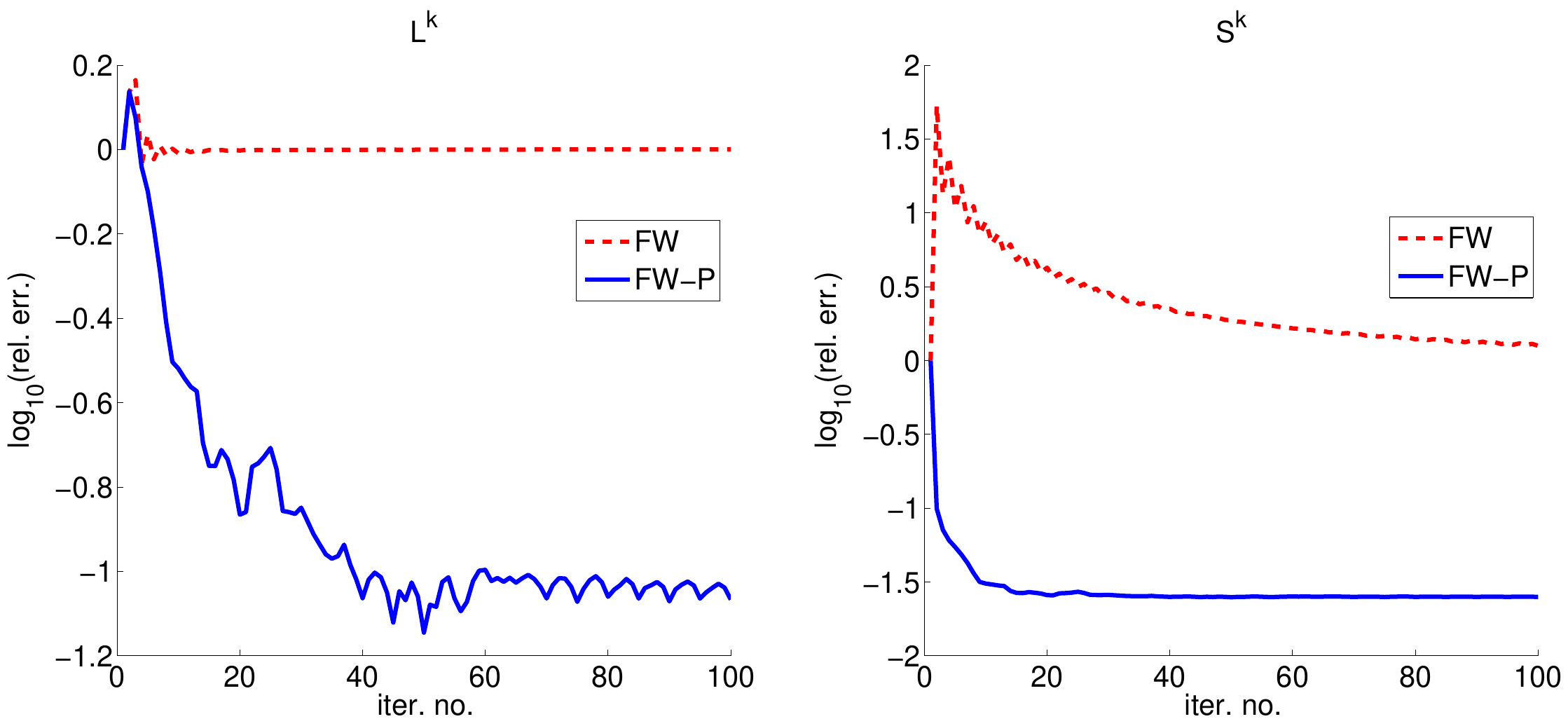}}
\end{minipage}
}
\caption{\textbf{Comparisons between Algorithms \ref{alg:FW_naive_constr} and \ref{alg:Frank-Wolfe_Proj} for problem \eqref{eqn:norm_cons_prob_2} on synthetic data.} The data are generated in
Matlab as
$\mathtt{m = 1000; \; n = 1000; \; r = 5;}$ $\mathtt{L_0 = randn(m,r) * randn(r,n);}$ $\mathtt{Omega = ones(m,n);}$
$\mathtt{S_0 = 100*randn(m,n).*(rand(m,n) < 0.01);}$ $\mathtt{M = L_0 + S_0 + randn(m,n);}$ $\mathtt{\tau_L = norm\_nuc(L_0);}$
 $\mathtt{\tau_S = norm(vec(S_0),1);}$
The left figure plots $\log_{10}(\norm{\mb L^k - \mb L_0}{F}/\norm{\mb L_0}{F})$ versus
the iteration number $k$. The right figure plots $\log_{10} (\norm{\mb S^k - \mb S_0}{F}/\norm{\mb S_0}{F})$ versus
$k$. The FW-P method is clearly more efficient than the straightforward FW method in recovering $\mb L_0$ and $\mb S_0$.
}
\label{fig:synthetic}
\end{figure}

\subsection{FW-P algorithm: combining Frank-Wolfe and projected gradient}
To overcome the drawback of the naive Frank-Wolfe algorithm described above, we propose incorporating an additional gradient projection step after each Frank-Wolfe update. This additional step updates the sparse term $\mb S$ only, with the goal of accelerating convergence in these variables. At iteration $k$, let $(\mb L^{k+1/2},\mb S^{k+1/2})$ be the result produced by Frank-Wolfe. To produce the next iterate, we retain the low rank term $\mb L^{k+1/2}$, but set
\begin{eqnarray}
\mb S^{k+1} &=& \mc P_{\norm{\cdot}{1} \le \tau_S}\left[ \, \mb S^{k+\frac{1}{2}} - \nabla_{\mb S} l(\mb L^{k+\frac{1}{2}},\mb S^{k+\frac{1}{2}} ) \, \right] \\
 &=& \Proj_{\norm{\cdot}{1} \le \tau_S} \left[\,\mb S^{k+\frac{1}{2}} - \PQ [\mb L^{k+\frac{1}{2}}+\mb S^{k+\frac{1}{2}}-\mb M] \, \right]; \label{eqn:projection}
\end{eqnarray}
i.e.  we simply take an additional projected gradient step in the sparse term $\mb S$. The resulting algorithm is presented as Algorithm \ref{alg:Frank-Wolfe_Proj} below. We call this method the FW-P algorithm, as it combines Frank-Wolfe steps and projections. In Figure \ref{fig:synthetic}, we compare Algorithms \ref{alg:FW_naive_constr} and \ref{alg:Frank-Wolfe_Proj} on synthetic data. In this example, the FW-P method is clearly more efficient in recovering $\mb L_0$ and $\mb S_0$.

\begin{algorithm}[h]
   \caption{FW-P method for problem \eqref{eqn:norm_cons_prob_2}}
   \label{alg:Frank-Wolfe_Proj}
\begin{algorithmic}[1]
   \STATE {\bfseries Initialization:} $\mb L^0 = \mb S^0 = \mb 0;$
   \FOR{$k=0,\; 1,\; 2, \; \cdots$}
   \STATE $\mb D_L^k \in \arg \min_{\norm{\mb D_L}{*}\le 1} \< \PQ[\mb L^k + \mb S^k - \mb M], \; \mb D_L\>$; $\mb V_L^k = \tau_L \mb D_L^k$;
   \STATE $\mb D_S^k \in \arg \min_{\norm{\mb D_S}{1}\le 1} \< \PQ[\mb L^k + \mb S^k - \mb M], \; \mb D_S\>$; $\mb V_S^k = \tau_S \mb D_S^k$;
   \STATE $\gamma = \frac{2}{k+2}$;
   \STATE $\mb L^{k+\frac{1}{2}} = \mb L^k + \gamma(\mb V_L^k - \mb L^k);$
   \STATE $\mb S^{k+\frac{1}{2}} = \mb S^k + \gamma(\mb V_S^k - \mb S^k);$
   \STATE $\mb S^{k+1} = \Proj_{\norm{\cdot}{1} \le \tau_S} \big[\mb S^{k+\frac{1}{2}} - \PQ [\mb L^{k+\frac{1}{2}}+\mb S^{k+\frac{1}{2}}-\mb M] \big];$ \label{eqn:fw-p_proj_step}
   \STATE $\mb L^{k+1} = \mb L^{k+\frac{1}{2}};$
   \ENDFOR
\end{algorithmic}
\end{algorithm}

The convergence of Algorithm \ref{alg:Frank-Wolfe_Proj} can be analyzed by recognizing it as a specific instance of the generalized Frank-Wolfe iteration in Algorithm \ref{alg:general_frank_wolfe}. This projection step \eqref{eqn:projection} can be
regarded as a proximal step to set $\mb S^{k+1}$ as
\begin{flalign*}
\arg \min_{\norm{\mb S}{1}\le \tau_S} \quad \hat l^{k+\frac{1}{2}}(\mb S):= &l(\mb L^{k+\frac{1}{2}}, \mb S^{k+\frac{1}{2}})+ \\
&\qquad \<\nabla_{\mb S} l(\mb L^{k+\frac{1}{2}}, \mb S^{k+\frac{1}{2}}), \mb S - \mb S^{k+\frac{1}{2}} \>
+ \frac{1}{2} \norm{\mb S - \mb S^{k+\frac{1}{2}}}{F}^2.
\end{flalign*}
It can then be easily verified that
%$\hat l^{k+\frac{1}{2}}(\cdot)$ is a majorization function over $l(\mb L^{k+\frac{1}{2}}, \cdot)$ at point $\mb S^{k+\frac{1}{2}}$, i.e.,
\begin{equation}
\hat l^{k+\frac{1}{2}}(\mb S^{k+\frac{1}{2}}) = l(\mb L^{k+\frac{1}{2}}, \mb S^{k+\frac{1}{2}}), \quad  \mbox{and } \quad
\hat l^{k+\frac{1}{2}}(\mb S) \ge l(\mb L^{k+\frac{1}{2}}, \mb S) \; \; \mbox{for any } \mb S,
\end{equation}
since $\nabla_{\mb S} l(\mb L, \mb S)$ is 1-Lipschitz.
This implies that the FW-P algorithm chooses a next iterate whose objective is no worse than that produced by the Frank-Wolfe step:
\begin{eqnarray}\nonumber
l(\mb L^{k+1},\mb S^{k+1}) = l(\mb L^{k+\frac{1}{2}},\mb S^{k+1})\le \hat{l}^{k+\frac{1}{2}}(\mb S^{k+1}) \le  \hat{l}^{k+\frac{1}{2}}(\mb S^{k+\frac{1}{2}}) = l(\mb L^{k+\frac{1}{2}},\mb S^{k+\frac{1}{2}}).
\end{eqnarray}
This is precisely the property that is required to invoke
Algorithm \ref{alg:general_frank_wolfe} and Theorems \ref{thm:fw_primal_conv} and \ref{thm:fw_dual}. Using Lemmas \ref{lem:lipschitz} and \ref{lem:diameter} to estimate the Lipschitz constant of $\nabla l$ and the diameter of $\mc D$, we obtain the following result, which shows that FW-P retains the $O(1/k)$ convergence rate of the original FW method:

\begin{theorem} \label{thm:nc_main}
Let $l^\star$ be the optimal value to problem \eqref{eqn:norm_cons_prob_2}, $\bm x^k = (\mb L^k, \mb S^k)$ and $\bm v^k = (\mb V_L^k, \mb V_S^k)$ be the sequence produced by Algorithm \ref{alg:Frank-Wolfe_Proj}. Then we have
\begin{equation}
l(\mb L^k, \mb S^k) - l^\star \le \frac{16(\tau_L^2+\tau_S^2)}{k+2}.
\end{equation}
Moreover, for any $K\ge 1$, there exists $1\le \tilde k \le K$ such that the surrogate duality gap (defined in \eqref{eqn:dual_gap}) satisfies
\begin{equation}
d(\bm x^{\tilde k}) = \innerprod{\bm x^{\tilde k} - \bm v^{\tilde k}}{\nabla l(\bm x^{\tilde k})} \le \frac{48(\tau_L^2+\tau_S^2)}{K+2}.
\end{equation}
\end{theorem}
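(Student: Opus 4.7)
The plan is to recognize Algorithm \ref{alg:Frank-Wolfe_Proj} as a particular instance of the generalized Frank--Wolfe scheme in Algorithm \ref{alg:general_frank_wolfe}, and then directly invoke Theorems \ref{thm:fw_primal_conv} and \ref{thm:fw_dual} with the explicit Lipschitz and diameter constants supplied by Lemmas \ref{lem:lipschitz_p} and \ref{lem:diameter_p}. The only non-routine point is verifying the required descent property
$$l(\mb L^{k+1},\mb S^{k+1}) \;\le\; l\bigl(\mb L^{k}+\gamma(\mb V_L^k-\mb L^{k}),\; \mb S^{k}+\gamma(\mb V_S^k-\mb S^{k})\bigr),$$
i.e., that the FW-P iterate is no worse than the iterate produced by one vanilla Frank--Wolfe step with step size $\gamma = 2/(k+2)$.

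First I would check this descent inequality using the majorization argument already sketched just above the theorem. Because $\PQ$ is an orthogonal projector and hence non-expansive, the partial gradient $\nabla_{\mb S}\, l(\mb L^{k+1/2},\cdot)=\PQ[\mb L^{k+1/2}+\cdot-\mb M]$ is $1$-Lipschitz in $\mb S$ alone, even though Lemma \ref{lem:lipschitz_p} only gives Lipschitz constant $2$ for the joint gradient. Consequently the quadratic surrogate $\hat l^{k+1/2}(\cdot)$ defined in the paper majorizes $l(\mb L^{k+1/2},\cdot)$, and line \ref{eqn:fw-p_proj_step} is exactly $\mb S^{k+1}\in\arg\min_{\norm{\mb S}{1}\le\tau_S}\hat l^{k+1/2}(\mb S)$. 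The chain
$$l(\mb L^{k+1},\mb S^{k+1}) \;\le\; \hat l^{k+1/2}(\mb S^{k+1}) \;\le\; \hat l^{k+1/2}(\mb S^{k+1/2}) \;=\; l(\mb L^{k+1/2},\mb S^{k+1/2})$$
then shows that the FW-P iterate is at least as good as the vanilla FW iterate, which is the property Algorithm \ref{alg:general_frank_wolfe} requires.

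With this descent property in hand, Algorithm \ref{alg:Frank-Wolfe_Proj} qualifies as an instance of Algorithm \ref{alg:general_frank_wolfe} applied to $h=l$ over the feasible set $\mc D=\{(\mb L,\mb S)\mid \norm{\mb L}{*}\le\tau_L,\ \norm{\mb S}{1}\le\tau_S\}$. Lemma \ref{lem:lipschitz_p} yields $L=2$ and Lemma \ref{lem:diameter_p} yields $D^2\le 4(\tau_L^2+\tau_S^2)$. Substituting into Theorem \ref{thm:fw_primal_conv} gives
$$l(\mb L^k,\mb S^k)-l^\star \;\le\; \frac{2LD^2}{k+2} \;\le\; \frac{16(\tau_L^2+\tau_S^2)}{k+2},$$
and substituting into Theorem \ref{thm:fw_dual} gives $d(\bm x^{\tilde k})\le 6LD^2/(K+2)\le 48(\tau_L^2+\tau_S^2)/(K+2)$, matching both conclusions of the theorem exactly.

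The main obstacle is really just the verification that the unit-step gradient--projection update in line \ref{eqn:fw-p_proj_step} produces a valid majorization step: one has to notice that the partial gradient in $\mb S$ is $1$-Lipschitz, not merely $2$-Lipschitz jointly, so that a step size of $1$ is admissible and the majorizer $\hat l^{k+1/2}$ really dominates $l(\mb L^{k+1/2},\cdot)$. Once this non-expansiveness observation is in place, everything else reduces to plugging previously computed constants into two Frank--Wolfe convergence theorems that have already been proved.
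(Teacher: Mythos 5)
Your proposal is correct and follows essentially the same route as the paper: establish the descent property $l(\mb L^{k+1},\mb S^{k+1})\le l(\mb L^{k+1/2},\mb S^{k+1/2})$ via the majorization $\hat l^{k+1/2}$ so that Algorithm \ref{alg:Frank-Wolfe_Proj} qualifies as an instance of Algorithm \ref{alg:general_frank_wolfe}, then substitute $L=2$ and $D^2\le 4(\tau_L^2+\tau_S^2)$ into Theorems \ref{thm:fw_primal_conv} and \ref{thm:fw_dual}. Your explicit justification that the partial gradient in $\mb S$ is $1$-Lipschitz (so the unit-step surrogate genuinely majorizes) makes precise a step the paper only asserts as "easily verified," but the argument is the same.
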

\begin{proof}
Substituting $L = 2$ (Lemma \ref{lem:lipschitz_p}) and $D \le 2 \sqrt{\tau_L^2 + \tau_S^2}$ (Lemma \ref{lem:diameter_p}) into Theorems \ref{thm:fw_primal_conv} and \ref{thm:fw_dual}, we can easily obtain the above result.
\end{proof}

\section{FW-T Method for Penalized Problem} \label{sec:FW-T} \label{sec:SPCP}
In this section, we develop a scalable algorithm for the penalized version of the CPCP problem,
\begin{eqnarray}
 \min_{\mb L, \mb S} \quad f(\mb L, \mb S) \doteq \frac{1}{2} \norm{\PQ [\mb L+\mb S - \mb M]}{F}^2 + \lambda_L \norm{\mb L}{*} + \lambda_S \norm{\mb S}{1}.
\label{eqn:penalized_prob_main}
\end{eqnarray}
In Section \ref{subsec:reformulation}, we reformulate problem \eqref{eqn:penalized_prob_main} into the form of \eqref{eqn:general_constr} so that the Frank-Wolfe method can be applied. In Section \ref{sec:FW-penalized}, we apply the Frank-Wolfe method directly to the reformulated problem, achieving linear per-iteration cost and $O(1/k)$ convergence in function value. However, because it updates the sparse term one element at a time, it converges very slowly on typical numerical examples. In Section \ref{sec:FW-T}, we introduce our FW-T method, which resolves this issue. Our FW-T method essentially exploits the Frank-Wolfe step to handle the nuclear norm and a proximal gradient step to handle the $\ell_1$-norm, while keeping iteration cost low and retaining convergence guarantees.

\subsection{Reformulation as smooth, constrained optimization}\label{subsec:reformulation}
Note that problem \eqref{eqn:penalized_prob_main} has a non-differentiable objective function and an unbounded feasible set. To apply the Frank-Wolfe method, we exploit a two-step reformulation to transform \eqref{eqn:penalized_prob_main} into the form of \eqref{eqn:general_constr}. First, we borrow ideas from \cite{harchaoui2013conditional} and work with the epigraph reformulation of \eqref{eqn:penalized_prob_main},
\begin{eqnarray} \label{eqn:penal_prob_epi}
% \nonumber to remove numbering (before each equation)
&\min   \quad    &  g(\mb L, \mb S, t_L, t_S) \doteq \frac{1}{2} \norm{\PQ [\mb L +\mb S -\mb M]}{F}^2 + \lambda_L t_L + \lambda_S t_S \nonumber\\
&\mbox{s.t.}\quad &  \norm{\mb L}{*}\le t_L, \; \norm{\mb S}{1}\le t_S,
%&           &  \norm{\mb S}{1}\le t_S \nonumber.
\end{eqnarray}
obtained by introducing auxiliary variables $t_L$ and $t_S$. Now the objective function $g(\mb L, \mb S, t_L, t_S)$ is differentiable, with \begin{eqnarray}
&\nabla_{L} g(\mb L,\mb S,t_L,t_S) = \nabla_{S} g(\mb L, \mb S,t_L,t_S) =  \PQ[ \mb L + \mb S - \mb M ], \label{eqn:gradient_LS} \\
&\nabla_{t_L} g(\mb L,\mb S, t_L,t_S) =  \lambda_L, \quad
\nabla_{t_S} g(\mb L,\mb S,t_L,t_S)  = \lambda_S. \label{eqn:gradient_t}
\end{eqnarray}
A calculation, which we summarize in the following lemma, shows that the gradient  $\nabla g(\mb L, \mb S, t_L, t_S) = (\nabla_{L} g,\nabla_{S} g, \nabla_{t_L} g,\nabla_{t_S} g)$ is 2-Lipschitz:
\begin{lemma}\label{lem:lipschitz} For all $(\mb L,\mb S, t_L, t_S)$ and $(\mb L',\mb S', t_L', t_S')$ feasible to \eqref{eqn:penal_prob_epi},
\begin{equation}
\norm{\nabla g(\mb L, \mb S, t_L, t_S) - \nabla g(\mb L',\mb S', t_L', t_S')}{F} \le 2 \norm{ (\mb L,\mb S, t_L, t_S) - (\mb L',\mb S', t_L', t_S') }{F}.
\end{equation}
\end{lemma}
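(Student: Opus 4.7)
The plan is to exploit the explicit formulas (\ref{eqn:gradient_LS})--(\ref{eqn:gradient_t}) and note that the Lipschitz property reduces to a two-line estimate. First, because $\nabla_{t_L} g \equiv \lambda_L$ and $\nabla_{t_S} g \equiv \lambda_S$ are constants, the $t_L$ and $t_S$ entries of $\nabla g(\mb L,\mb S,t_L,t_S) - \nabla g(\mb L',\mb S',t_L',t_S')$ vanish identically. So only the matrix entries contribute to the Frobenius norm on the left-hand side.

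Next, I would observe that $\nabla_L g$ and $\nabla_S g$ are the same object, namely $\PQ[\mb L+\mb S-\mb M]$. Hence the nonzero part of the gradient difference is two identical copies of
\begin{equation*}
\PQ\bigl[(\mb L-\mb L')+(\mb S-\mb S')\bigr],
\end{equation*}
contributing a factor of $2$ to the squared Frobenius norm. The remaining step is to bound this quantity: use that $\PQ$ is an orthogonal projection (so $\|\PQ[X]\|_F \le \|X\|_F$) followed by the parallelogram-type inequality $\|A+B\|_F^2 \le 2(\|A\|_F^2 + \|B\|_F^2)$.

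Putting the pieces together yields
\begin{equation*}
\|\nabla g - \nabla g'\|_F^2 \;\le\; 2 \cdot 2\bigl(\|\mb L-\mb L'\|_F^2 + \|\mb S-\mb S'\|_F^2\bigr) \;\le\; 4\,\|(\mb L,\mb S,t_L,t_S)-(\mb L',\mb S',t_L',t_S')\|_F^2,
\end{equation*}
where in the last step I throw in the nonnegative $(t_L-t_L')^2$ and $(t_S-t_S')^2$ terms for free on the right-hand side. Taking square roots gives the Lipschitz constant $2$.

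There is no real obstacle here: the calculation is routine linear algebra, and the constant $2$ tracks cleanly from the fact that $\nabla_L g = \nabla_S g$ (the source of one factor of $2$) combined with the parallelogram estimate (the source of the other). The only thing worth double-checking is that the constant is sharp enough for how it is used later — namely, in plugging into the general Frank-Wolfe bounds of Theorems \ref{thm:fw_primal_conv} and \ref{thm:fw_dual} — which it is, since $L=2$ matches the constant used in the analogous Lemma \ref{lem:lipschitz_p} for the norm-constrained problem.
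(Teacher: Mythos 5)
Your proof is correct and follows essentially the same route as the paper's: the constant gradient components in $t_L,t_S$ drop out, the two identical copies of $\PQ[(\mb L-\mb L')+(\mb S-\mb S')]$ give one factor of $2$ in the squared norm, and nonexpansiveness of $\PQ$ plus the inequality $\norm{A+B}{F}^2 \le 2(\norm{A}{F}^2+\norm{B}{F}^2)$ give the rest. No differences worth noting.
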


\begin{proof}
Based on \eqref{eqn:gradient_LS} and \eqref{eqn:gradient_t}, it follows directly that
\begin{flalign*}
 \norm{ \nabla g(\mb L,\mb S, t_L, t_S) - \nabla g(\mb L',\mb S', t_L', t_S') }{F}^2
%&=& 2 \norm{ \PQ[ \mb L + \mb S - \mb M ] - \PQ[ \mb L' + \mb S' - \mb M ] }{F}^2 + (\lambda_L - \lambda_L)^2 + (\lambda_S - \lambda_S)^2 \\
%&=& 2 \norm{ \PQ[ \mb L + \mb S - \mb M ] - \PQ[ \mb L' + \mb S' - \mb M ] }{F}^2 \\
%&=& 2 \norm{ \PQ[ \mb L + \mb S ] - \PQ[ \mb L' + \mb S' ] }{F}^2 \\
%&\le& 2 \norm{ \mb L + \mb S - \mb L' - \mb S' }{F}^2 \\
&\le 4 \norm{\mb L - \mb L'}{F}^2 + 4 \norm{\mb S - \mb S'}{F}^2 \\
&\le 4 \norm{ (\mb L,\mb S, t_L, t_S) - (\mb L',\mb S', t_L', t_S') }{F}^2,
\end{flalign*}
which implies the result.
\end{proof}

However, the Frank-Wolfe method still cannot deal with \eqref{eqn:penal_prob_epi}, since its feasible region is unbounded. If we could somehow obtain upper bounds on the optimal values of $t_L$ and $t_S$: $U_L \ge t_L^\star$ and $U_S \ge t_S^\star$, then we could solve the equivalent problem
\begin{eqnarray} \label{eqn:penal_prob_epi_bd}
% \nonumber to remove numbering (before each equation)
&\min   \quad    &  \frac{1}{2} \norm{\PQ [\mb L +\mb S -\mb M]}{F}^2 + \lambda_L t_L + \lambda_S t_S \\
&\mbox{s.t.}\quad&  \norm{\mb L}{*}\le t_L\le U_L, \; \norm{\mb S}{1}\le t_S\le U_S \nonumber,
%&           &  \norm{\mb S}{1}\le t_S\le U_S \nonumber,
\end{eqnarray}
which now has a compact and convex feasible set. One simple way to obtain such $U_L$, $U_S$ is as follows. One trivial feasible solution to problem \eqref{eqn:penal_prob_epi} is $\mb L = \mb 0$, $\mb S = \mb 0$, $t_L = 0$, $t_S = 0$. This solution has objective value $\tfrac{1}{2} \norm{\PQ[\mb M]}{F}^2$. Hence, the optimal objective value is no larger than this. This implies that for any optimal $t_L^\star, t_S^\star$,
\begin{eqnarray}
t_L^\star \;\le\; \frac{1}{2\lambda_L} \norm{\PQ[ \mb M ]}{F}^2, \qquad t_S^\star \;\le\; \frac{1}{2\lambda_S} \norm{\PQ[\mb M]}{F}^2.
\end{eqnarray}
Hence, we can always choose
\begin{equation} \label{eqn:U1-U2}
U_L = \frac{1}{2\lambda_L} \norm{\PQ[ \mb M ]}{F}^2, \quad U_S = \frac{1}{2\lambda_S} \norm{\PQ[\mb M]}{F}^2
\end{equation}
to produce a valid, bounded feasible region. The following lemma bounds its diameter $D$:
\begin{lemma}\label{lem:diameter} The feasible set  $\mc D = \set{ (\mb L,\mb S, t_L, t_S) \mid \norm{\mb L}{*} \le t_L \le U_L , \; \norm{\mb S}{1} \le t_S \le U_S }$ has diameter $D \le \sqrt 5 \cdot \sqrt{U_L^2 + U_S^2 }$.
\end{lemma}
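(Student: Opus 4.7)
The plan is to bound the Frobenius/Euclidean distance between two arbitrary points $(\mb L, \mb S, t_L, t_S)$ and $(\mb L', \mb S', t_L', t_S')$ of $\mc D$ componentwise, and then recombine via the Pythagorean identity on the product space. Specifically,
\[
\norm{(\mb L,\mb S,t_L,t_S) - (\mb L',\mb S',t_L',t_S')}{F}^2 = \norm{\mb L - \mb L'}{F}^2 + \norm{\mb S - \mb S'}{F}^2 + (t_L - t_L')^2 + (t_S - t_S')^2,
\]
so it suffices to upper-bound each of the four terms.

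For the matrix terms I would use the triangle inequality together with the standard norm comparisons $\norm{\mb L}{F} \le \norm{\mb L}{*}$ (since $\sqrt{\sum_i \sigma_i^2} \le \sum_i \sigma_i$ for nonnegative $\sigma_i$) and $\norm{\mb S}{F} \le \norm{\mb S}{1}$ (same inequality applied entrywise). Feasibility then yields
\[
\norm{\mb L - \mb L'}{F} \le \norm{\mb L}{F} + \norm{\mb L'}{F} \le \norm{\mb L}{*} + \norm{\mb L'}{*} \le t_L + t_L' \le 2 U_L,
\]
and similarly $\norm{\mb S - \mb S'}{F} \le 2 U_S$. For the scalar terms, since $0 \le t_L, t_L' \le U_L$ and $0 \le t_S, t_S' \le U_S$, we immediately get $|t_L - t_L'| \le U_L$ and $|t_S - t_S'| \le U_S$.

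Plugging everything back gives
\[
\norm{(\mb L,\mb S,t_L,t_S) - (\mb L',\mb S',t_L',t_S')}{F}^2 \le 4 U_L^2 + 4 U_S^2 + U_L^2 + U_S^2 = 5(U_L^2 + U_S^2),
\]
and taking square roots and supremum yields the claimed bound $D \le \sqrt{5}\sqrt{U_L^2 + U_S^2}$. There is no real obstacle here: the only subtlety worth flagging is the norm comparison $\norm{\cdot}{F} \le \norm{\cdot}{*}$ and $\norm{\cdot}{F} \le \norm{\cdot}{1}$, which is what lets feasibility (stated in terms of $\norm{\cdot}{*}$ and $\norm{\cdot}{1}$) translate into a Frobenius-norm diameter bound.
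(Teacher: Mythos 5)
Your proof is correct and follows essentially the same route as the paper's: decompose the squared distance into the four components, bound the matrix blocks via the triangle inequality and the comparisons $\norm{\cdot}{F} \le \norm{\cdot}{*}$ and $\norm{\cdot}{F} \le \norm{\cdot}{1}$ combined with feasibility, bound the scalar blocks by $U_L$ and $U_S$, and sum to get $5(U_L^2+U_S^2)$. Nothing is missing.
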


\begin{proof}
Since for any $\mb Z = (\mb L, \mb S, t_L, t_S)$, $\mb Z' = (\mb L', \mb S', t_L',t_S') \in \mc D$, we have
\begin{flalign*} \label{eqn:nc_diameter}
\norm{\mb Z - \mb Z'}{F}^2 &= \norm{\mb L-\mb L'}{F}^2+\norm{\mb S-\mb S'}{F}^2 + (t_L -t_L')^2 + (t_S - t_S')^2 \\
&\le (\norm{\mb L}{F}+\norm{\mb L'}{F})^2+(\norm{\mb S}{F}+\norm{\mb S'}{F})^2+ (t_L -t_L')^2 + (t_S - t_S')^2 \nonumber \\
&\le (\norm{\mb L}{*}+\norm{\mb L'}{*})^2+(\norm{\mb S}{1}+\norm{\mb S'}{1})^2 + (t_L -t_L')^2 + (t_S - t_S')^2 \\
&\le (U_L + U_L)^2 + (U_S + U_S)^2 + U_L^2 + U_S^2 \\
& = 5(U_L^2 + U_S^2),
\end{flalign*}
which implies the result.
\end{proof}

With these modifications, we can apply Frank-Wolfe directly to obtain a solution $(\widehat{\mb L},\widehat{\mb S},\widehat{t_L},\widehat{t_S})$ to \eqref{eqn:penal_prob_epi_bd}, and hence to produce a solution $(\widehat{\mb L},\widehat{\mb S})$ to the original problem \eqref{eqn:penalized_prob_main}. In subsection \ref{sec:FW-penalized}, we describe how to do this. Unfortunately, this straightforward solution has two main disadvantages. First, as in the norm constrained case, it produces only one-sparse updates to $\mb S$, which results in slow convergence. Second, the exact primal convergence rate in Theorem \ref{thm:fw_primal_conv} depends on the diameter of the feasible set, which in turn depends on the accuracy of our (crude) upper bounds $U_L$ and $U_S$. In subsection \ref{subsec:FW-T}, we show how to remedy both issues, yielding a Frank-Wolfe-Thresholding method that performs significantly better in practice.

\subsection{Frank-Wolfe for problem  \eqref{eqn:penal_prob_epi_bd}} \label{sec:FW-penalized}
Applying the Frank-Wolfe method in Algorithm \ref{alg:Frank-Wolfe} generates a sequence of iterates $\bm x^k = (\mb L^k, \mb S^k, t_L^k, t_S^k)$. Using the expressions for the gradient in \eqref{eqn:gradient_LS} and \eqref{eqn:gradient_t}, at each iteration, $\bm v^k = (\mb V_L^k, \mb V_S^k, V_{t_L}^k, V_{t_S}^k)$ is generated by solving the linearized subproblem
\begin{flalign}
\bm v^k \in \arg \min_{\bm v\in \mc D} \quad \innerprod{\PQ [\mb L^k + \mb S^k - \mb M]}{\mb V_L+ \mb V_S}+\lambda_L V_{t_L}+\lambda_S V_{t_S},
\end{flalign}
which can be decoupled into two independent subproblems,
\begin{flalign}
% \nonumber to remove numbering (before each equation)
(\mb V_L^k, V_{t_L}^k)   &\in  \arg \hspace{-5mm} \min_{\norm{\mb V_L}{*} \le V_{t_L}\le U_L} g_L(\mb V_L, V_{t_L}) \doteq \innerprod{\PQ [\mb L^k + \mb S^k - \mb M]}{\mb V_L}+\lambda_L V_{t_L}     \label{eqn:V_L_V_t}\\
(\mb V_S^k, V_{t_S}^k)   &\in  \arg \hspace{-5mm} \min_{\norm{\mb V_S}{1} \le V_{t_S}\le U_S} g_S(\mb V_S, V_{t_S}) \doteq \innerprod{\PQ [\mb L^k + \mb S^k - \mb M]}{\mb V_S}+\lambda_S V_{t_S}.
\end{flalign}
Let us consider problem \eqref{eqn:V_L_V_t} first. Set
\begin{equation}
\mb D_L^k \in \arg \min_{\norm{\mb D_L}{*}\le 1} \;\; \hat g_L (\mb D_L)\doteq \innerprod{\PQ [\mb L^k + \mb S^k - \mb M]}{\mb D_L}+\lambda_L.
\end{equation}
Because $g_L(\mb V_L, V_{t_L})$ is a homogeneous function, i.e.,
$g_L(\alpha \mb V_L, \alpha V_{t_L}) = \alpha g_L(\mb V_L, V_{t_L})$, for any $\alpha \in \reals$, its optimal value $g(\mb V_L^k, V_{t_L}^k)= V_{t_L}^k \hat g_L(\mb D_L^k)$. Hence $V_{t_L}^k = U_L$ if $\hat g_L(\mb D_L^k)<0$, and $V_{t_L}^k = 0$ if $\hat g_L(\mb D_L^k)>0$. From this observation, it can be easily verified (see also \cite[Lemma 1]{harchaoui2013conditional} for a more general result) that
\begin{equation} \label{eqn:pen_linear_sub}
(\mb V^k_L, V^k_{t_L}) \in \begin{cases} \set{(\mb 0, 0)} &\mbox{if }  \hat g_L (\mb D_L^k) > 0 \\
\mbox{conv}\{(\mb 0, 0), U_L(\mb D_L^k, 1) \}& \mbox{if } \hat g_L (\mb D_L^k) = 0 \\
\set{U_L(\mb D_L^k, 1)} & \mbox{if } \hat g_L (\mb D_L^k) < 0.   \end{cases}
\end{equation}
In a similar manner, we can update $(\mb V_S^k, V_{t_S}^k)$. This leads fairly directly to the implementation of the Frank-Wolfe method for problem \eqref{eqn:penal_prob_epi_bd}, described in
Algorithm \ref{alg:Frank_Wolfe_penal}. As a direct corollary of Theorem \ref{thm:fw_primal_conv}, using parameters calculated in Lemmas \ref{lem:lipschitz} and \ref{lem:diameter}, we have
\begin{corollary}\label{cor:FW_penal}
Let $\bm x^\star = (\mb L^\star, \mb S^\star, t_L^\star, t_S^\star)$ be an optimal solution to \eqref{eqn:penal_prob_epi_bd}. For $\{\bm x^k\}$ generated by Algorithm $\ref{alg:Frank_Wolfe_penal}$, we have %\footnote{A more careful calculation would lead us to $g(\bm x^{k})-g(\bm x^\star) \le \frac{16(U_L^2+U_S^2)}{k+2}$, which we also include in the appendix.}
for $k = 0,\; 1, \;2, \;\ldots,$
\begin{equation}
g(\bm x^{k})-g(\bm x^\star) \le \frac{20 ( U_L^2+U_S^2 )}{k+2}.
\end{equation}
\end{corollary}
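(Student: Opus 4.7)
The plan is to read off Corollary~\ref{cor:FW_penal} as a direct instantiation of Theorem~\ref{thm:fw_primal_conv}. Algorithm~\ref{alg:Frank_Wolfe_penal} is, by construction, the basic Frank-Wolfe iteration of Algorithm~\ref{alg:Frank-Wolfe} applied to problem~\eqref{eqn:penal_prob_epi_bd}: at each step we linearize $g$, decouple the linear oracle into the $(\mb L, t_L)$ and $(\mb S, t_S)$ blocks, use the closed-form rule~\eqref{eqn:pen_linear_sub} (and its analogue for $(\mb S, t_S)$) to produce the vertex $\bm v^k$, and then take the fixed step $\gamma = 2/(k+2)$. The abstract convergence guarantee therefore applies verbatim, and it only remains to plug numbers into $2LD^2/(k+2)$.

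First I would invoke Lemma~\ref{lem:lipschitz} to set $L = 2$; the auxiliary variables $t_L, t_S$ enter $g$ only linearly, so they do not contribute to the Lipschitz constant of $\nabla g$ beyond what that lemma already records. Second, Lemma~\ref{lem:diameter} gives $D \le \sqrt{5}\sqrt{U_L^2+U_S^2}$, so $D^2 \le 5(U_L^2+U_S^2)$. Combining yields $2LD^2 \le 20(U_L^2+U_S^2)$, which is precisely the stated rate. There is no substantive obstacle: once the two lemmas are in place, the corollary is a one-line consequence of Theorem~\ref{thm:fw_primal_conv}.

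The footnote promises the sharper constant $16$ in place of $20$, to be justified in the appendix. I would obtain this either by tightening Lemma~\ref{lem:diameter} (one of the $\sqrt{5}$ factors is a little loose, since the feasible set is a product of two balls whose radii carry different weights and the naive diameter bound ignores this), or by rerunning the proof of Theorem~\ref{thm:fw_primal_conv} directly on problem~\eqref{eqn:penal_prob_epi_bd}, exploiting the block structure of both the gradient and the Frank-Wolfe vertex to apply the descent inequality to the $(\mb L,t_L)$ and $(\mb S,t_S)$ blocks independently. Either route should replace the factor $5$ inside $D^2$ by $4$, and hence $20$ by $16$; this is the only point in the argument where any genuine care is required.
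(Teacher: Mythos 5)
Your derivation of the stated bound is exactly the paper's: plug $L=2$ (Lemma \ref{lem:lipschitz}) and $D^2 \le 5(U_L^2+U_S^2)$ (Lemma \ref{lem:diameter}) into Theorem \ref{thm:fw_primal_conv} to get $2LD^2/(k+2) = 20(U_L^2+U_S^2)/(k+2)$, so the corollary itself is fine and needs no further comment. On the footnote's constant $16$: your route (b) is what the appendix actually does --- since $g$ is linear in $t_L,t_S$, the exact expansion of $g$ along the step gives
$g(\bm x^{k+1}) \le g(\bm x^k) + \gamma \innerprod{\nabla g(\bm x^k)}{\bm v^k-\bm x^k} + \gamma^2\left(\norm{\mb V_L^k-\mb L^k}{F}^2+\norm{\mb V_S^k-\mb S^k}{F}^2\right) \le g(\bm x^k) + \gamma \innerprod{\nabla g(\bm x^k)}{\bm v^k-\bm x^k} + 4\gamma^2(U_L^2+U_S^2)$,
and rerunning the recurrence from the proof of Theorem \ref{thm:fw_primal_conv} with this inequality yields $16$. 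Your route (a) also works, but for a sharper reason than the one you give: on the block $\set{(\mb L,t_L) : \norm{\mb L}{*}\le t_L\le U_L}$ one has $\norm{\mb L-\mb L'}{F}\le \norm{\mb L}{F}+\norm{\mb L'}{F}\le t_L+t_L'$, so
$\norm{\mb L-\mb L'}{F}^2+(t_L-t_L')^2 \le (t_L+t_L')^2+(t_L-t_L')^2 = 2(t_L^2+t_L'^2)\le 4U_L^2$;
the point is that a large matrix-part distance forces a small scalar-part distance, so the two contributions cannot be maximized simultaneously. This gives the true bound $D^2\le 4(U_L^2+U_S^2)$ and hence $16$ directly from Theorem \ref{thm:fw_primal_conv}, a clean alternative to the appendix's recomputation.
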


\begin{proof}
Applying Theorem \ref{thm:fw_primal_conv} with parameters calculated in Lemmas \ref{lem:lipschitz} and \ref{lem:diameter}, we directly have
\begin{equation}\label{eqn:cor_20}
g(\bm x^k) - g(\bm x^\star) \le \frac{2\cdot 2 \cdot \left(\sqrt{5(U_L^2 +U_S^2)}\right)^2}{k+2} = \frac{20(U_L^2 +U_S^2)}{k+2}.
\end{equation}
A more careful calculation below slightly improves the constant in \eqref{eqn:cor_20}.
\begin{eqnarray}
g(\bm x^{k+1}) & = & g(\bm x^k + \gamma (\bm v^k - \bm x^k)) \nonumber\\
&\le& g(\bm x^k) + \gamma \innerprod{\nabla g(\bm x^k)}{\bm v^k-\bm x^k}+ \gamma^2 \norm{\mb V_L^k - \mb L^k}{F}^2 +\gamma^2 \norm{\mb V_S^k - \mb S^k}{F}^2 \nonumber\\
&\le& g(\bm x^k) + \gamma \innerprod{\nabla g(\bm x^k)}{\bm v^k-\bm x^k}+ 4\gamma^2(U_L^2 + U_S^2), \label{eqn:cor_careful}
\end{eqnarray}
where the second line holds by noting that $g$ is only linear in $t_L$ and $t_S$; the last line holds as
\begin{eqnarray*}
\norm{\mb V_L^k - \mb L^k}{F}^2 &\le& (\norm{\mb V^k_L}{F}+ \norm{\mb L^k}{F})^2 \le (U_L+U_L)^2 = 4U_L^2, \quad\mbox{and} \\
\norm{\mb V_S^k - \mb S^k}{F}^2 &\le& (\norm{\mb V^k_S}{F}+ \norm{\mb S^k}{F})^2 \le (U_S+U_S)^2 = 4U_S^2.
\end{eqnarray*}
Following the arguments in the proof of Theorem 1 with \eqref{eqn:fw_pf_prim_conv_first} replaced by \eqref{eqn:cor_careful}, we can easily obtain that
$$
g(\bm x^k) - g(\bm x^\star) \le  \frac{16(U_L^2 +U_S^2)}{k+2}.
$$
\end{proof}

\begin{algorithm}[tb]
   \caption{Frank-Wolfe method for problem \eqref{eqn:penal_prob_epi_bd}}
   \label{alg:Frank_Wolfe_penal}
\begin{algorithmic}[1]
   \STATE {\bfseries Initialization:} $\mb L^0 = \mb S^0 = \mb 0;$ $t_L^0 = t_S^0 = 0$;
   \FOR{$k=0,\; 1,\; 2, \; \ldots$}
   \STATE $\mb D_L^k \in \arg \min_{\norm{\mb D_L}{*}\le 1} \< \PQ[\mb L^k + \mb S^k - \mb M], \; \mb D_L\>$; \label{line:D_L}
   \STATE $\mb D_S^k \in \arg \min_{\norm{\mb D_S}{1}\le 1} \< \PQ[\mb L^k + \mb S^k - \mb M], \; \mb D_S\>$;  \label{line:D_S}
   \IF{ $\lambda_L \ge  -\< \PQ[\mb L^k + \mb S^k - \mb M], \; \mb D_L^k\>$}
   \STATE $\mb V_L^k = \mb 0$; $V^k_{t_L} = 0$
   \ELSE
   \STATE $\mb V_L^k = U_L \mb D_L^k$, $V^k_{t_L} = U_L$;
   \ENDIF

   \IF{ $\lambda_S \ge  -\< \PQ[\mb L^k + \mb S^k - \mb M], \; \mb D_S^k\>$}
   \STATE $\mb V_S^k = \mb 0$; $V^k_{t_S} = 0$;
   \ELSE
   \STATE $\mb V_S^k = U_S \mb D_S^k$, $V^k_{t_S} = U_S$;
   \ENDIF

   \STATE $\gamma = \frac{2}{k+2}$;
   \STATE $\mb L^{k+1} = (1-\gamma) \mb L^k + \gamma \mb V^k_L$, $t_L^{k+1}=(1-\gamma)t_L^k+\gamma V^k_{t_L}$;
   \STATE $\mb S^{k+1} = (1-\gamma) \mb S^k + \gamma \mb V^k_S$, $t_S^{k+1}=(1-\gamma)t_S^k+\gamma V^k_{t_S}$;

   \ENDFOR
\end{algorithmic}
\end{algorithm}

In addition to the above convergence result, another major advantage of Algorithm \ref{alg:Frank_Wolfe_penal} is the simplicity of the update rules (lines \ref{line:D_L}-\ref{line:D_S} in Algorithm \ref{alg:Frank_Wolfe_penal}). Both have closed-form solutions that can be computed in time (essentially) linearly dependent on the size of the input.

However, two clear limitations substantially hinder Algorithm \ref{alg:Frank_Wolfe_penal}'s efficiency. First, as in the norm constrained case, $\mb V_S^k$ has only one nonzero entry, so $\mb S$ has a one-sparse update in each iteration. Second, the exact rate of convergence relies on our (crude) guesses of $U_L$ and $U_S$ (Corollary \ref{cor:FW_penal}).
%The red curves in Figure \ref{fig:synthetic} show these effects on the practical convergence of the algorithm, on a simulated example of size $1,000 \times 1,000$, in which about $1\%$ of the entries in the target sparse matrix $\mb S_0$ are nonzero.\footnote{In specific, the data are generated in Matlab as $m = 1000; \; n = 1000; \; r = 5; \; L_0 = randn(m,r) * randn(r,n); \; Omega = ones(m,n); \; S_0 = 100*randn(m,n).*(rand(m,n) < 0.01); \; M = L_0 + S_0 + 0.1*randn(m,n);$} As we can observe, progress is very slow.
In the next subsection, we present remedies to resolve both issues.

\subsection{FW-T algorithm: combining Frank-Wolfe and proximal methods} \label{subsec:FW-T}

To alleviate the difficulties faced by Algorithm \ref{alg:Frank_Wolfe_penal}, we propose a new algorithm called Frank-Wolfe-Thresholding (FW-T) (Algorithm \ref{alg:FW-T}), that combines a modified FW step with a proximal gradient step.
%In Figure \ref{fig:synthetic}, we compare Algorithms \ref{alg:Frank_Wolfe_penal} and \ref{alg:FW-T} on synthetic data.
%In this example, the FW-T method is clearly more efficient in recovering $\mb L_0$ and $\mb S_0$.
Below we highlight the  key features of FW-T.

\paragraph{\textbf{Proximal gradient step for $\mb S$}} To update $\mb S$ in a more efficient way, we incorporate an additional proximal gradient step for $\mb S$. At iteration $k$, let $(\mb L^{k+\frac{1}{2}}, \mb S^{k+\frac{1}{2}})$ be the result produced by Frank-Wolfe step. To produce the next iterate, we retain the low-rank term $\mb L^{k+\frac{1}{2}}$, but execute a proximal gradient step for the function $f(\mb L^{k+\frac{1}{2}}, \mb S)$ at the point $\mb S^{k+\frac{1}{2}}$, i.e.
\begin{flalign}\label{eqn:threshold_exp}
\mb S^{k+1} \in  &\arg \min_{\mb S} \innerprod{\nabla_{\mb S} f(\mb L^{k+\frac{1}{2}}, \mb S^{k+\frac{1}{2}})}{\;\mb S - \mb S^{k+\frac{1}{2}}} + \frac{1}{2}\norm{\mb S - \mb S^{k+\frac{1}{2}}}{F}^2 + \lambda_S
\norm{\mb S}{1} \nonumber \\
=& \arg \min_{\mb S} \innerprod{\PQ [\mb L^{k+\frac{1}{2}}+ \mb S^{k+\frac{1}{2}} - \mb M]}{\mb S - \mb S^{k+\frac{1}{2}}} + \frac{1}{2}\norm{\mb S - \mb S^{k+\frac{1}{2}}}{F}^2 + \lambda_S
\norm{\mb S}{1}
\end{flalign}
which can be easily computed using the soft-thresholding operator:
\begin{equation}
\mb S^{k+1} = \mc T_{\lambda_S} \left[ \mb S^{k+\frac{1}{2}} - \PQ [ \mb L^{k+\frac{1}{2}} + \mb S^{k+\frac{1}{2}} - \mb M ] \right].
\end{equation}
\paragraph{\textbf{Exact line search}} For the Frank-Wolfe step, instead of choosing the fixed step length $\frac{2}{k+2}$, we implement an exact line search  by solving a two-dimensional quadratic problem \eqref{eqn:QP_step}, as in \cite{harchaoui2013conditional}. This modification turns out to be crucial to achieve a primal convergence result that only weakly depends on  the tightness of our guesses $U_L$ and $U_S$.
%does not depend on the tightness of our guesses $U_L$ and $U_S$. However, the convergence of the surrogate duality gap
%\begin{equation}
%d(\bm x^k) = \innerprod{\bm x^k - \bm v^k}{\nabla g(\bm x^k)}
%\end{equation}
%still depends on $U_L$ and $U_S$. Since this quantity is used as part of our stopping criterion, we still prefer a tight choice of upper bounds $U_L$ and $U_S$.
\paragraph{\textbf{Adaptive updates of $U_L$ and $U_S$}}
%Since the correct values are difficult to estimate ahead of time, we introduce a scheme which dynamically updates these quantities based on the intermediate outputs of the algorithm.
We initialize $U_L$ and $U_S$ using the crude bound \eqref{eqn:U1-U2}. Then, at the end of the $k$-iteration, we respectively update
\begin{equation}
    U_L^{k+1} = g(\mb L^{k+1}, \mb S^{k+1}, t_L^{k+1}, t_S^{k+1})/\lambda_L, \quad U_S^{k+1} = g(\mb L^{k+1}, \mb S^{k+1}, t_L^{k+1}, t_S^{k+1})/\lambda_S.
\end{equation}
This scheme maintains the property that $U_L^{k+1} \ge  t_L^\star$ and $U_S^{k+1} \ge t_S^\star$. Moreover, we prove (Lemma \ref{lem:non_dec}) that $g$ is non-increasing through our algorithm, and so this scheme produces a sequence of tighter upper bounds for $U_L^\star$ and $U_S^\star$. Although this dynamic scheme does not improve the theoretical convergence result, some acceleration is empirically exhibited.

\begin{algorithm}[t!]
   \caption{FW-T method for problem \eqref{eqn:penalized_prob_main}}
   \label{alg:FW-T}
\begin{algorithmic}[1]
   \STATE {\bfseries Input:} data matrix $\mb M \in \reals^{m\times n}$; weights $\lambda_L$, $\lambda_S > 0$;
                             max iteration number $T$;
   \STATE {\bfseries Initialization:} $\mb L^0 = \mb S^0 = \mb 0;$ $t_L^0 = t_S^0 = 0$; $U_L^0 = g(\mb L^0, \mb S^0, t_L^0, t_S^0)/\lambda_L$; $U_S^0 = g(\mb L^0, \mb S^0, t_L^0, t_S^0)/\lambda_S$;
   \FOR{$k=0,\; 1,\; 2, \; \cdots, \; T$}
   \STATE {\em same as lines 3-14 in Algorithm \ref{alg:Frank_Wolfe_penal};}
   \STATE $\bigg( \mb L^{k+\frac{1}{2}}, \mb S^{k+\frac{1}{2}}, t_L^{k+\frac{1}{2}}, t_S^{K+\frac{1}{2}} \bigg)$ is computed as an optimizer to\vspace{-.15mm}
    \begin{eqnarray}\label{eqn:QP_step}
    % \nonumber to remove numbering (before each equation)
      &\min\quad& \frac{1}{2} \norm{\PQ [\mb L +\mb S -\mb M]}{F}^2 + \lambda_L t_L + \lambda_S t_S \\
      &\mbox{s.t.}\quad& \left(
                      \begin{array}{c}
                        \mb L \\
                        t_L \\
                      \end{array}
                    \right)
      \in \mbox{conv}\set{\left(
                      \begin{array}{c}
                        \mb L^k \\
                        t_L^k \\
                      \end{array}
                    \right), \left(
                      \begin{array}{c}
                        \mb V^k_L \\
                        V^k_{t_L} \\
                      \end{array}
                    \right) }
                    \nonumber\\
                    &\quad&
                      \left(
                      \begin{array}{c}
                        \mb S \\
                        t_S \\
                      \end{array}
                    \right)
      \in \mbox{conv}\set{\left(
                      \begin{array}{c}
                        \mb S^k \\
                        t_S^k \\
                      \end{array}
                    \right), \left(
                      \begin{array}{c}
                        \mb V^k_S \\
                        V^k_{t_S} \\
                      \end{array}
                    \right) }; \nonumber
    \end{eqnarray}
   \STATE \label{eqn:threshold} $\mb S^{k+1} = \mc T\big[\mb S^{k+\frac{1}{2}} - \PQ [\mb L^{k+\frac{1}{2}}+\mb S^{k+\frac{1}{2}}-\mb M], \lambda_S \big];$
   \STATE $\mb L^{k+1} = \mb L^{k+\frac{1}{2}}$, $t_L^{k+1} = t_L^{k+\frac{1}{2}}$; $t_S^{k+1 } = \norm{\mb S^{k+1}}{1}$;
   %\STATE $t_L^{k+1} = \norm{\mb L^{k+1}}{*}$;
   \STATE \label{eqn:update_U_L}$U_L^{k+1} = g(\mb L^{k+1}, \mb S^{k+1}, t_L^{k+1}, t_S^{k+1})/\lambda_L;$
   \STATE \label{eqn:update_U_S}$U_S^{k+1} = g(\mb L^{k+1}, \mb S^{k+1}, t_L^{k+1}, t_S^{k+1})/\lambda_S;$
   \ENDFOR
\end{algorithmic}
\end{algorithm}

\paragraph{\textbf{Convergence analysis}} Since both the FW step and the proximal gradient step do not increase the objective value, we can easily recognize FW-T method as a descent algorithm:
\begin{lemma} \label{lem:non_dec} Let $\{(\mb L^k,\mb S^k, t_L^k, t_S^k)\}$ be the sequence of iterates produced by the FW-T algorithm. For each $k = 0, 1, 2 \cdots$,
\begin{equation}
g(\mb L^{k+1}, \mb S^{k+1}, t_L^{k+1}, t_S^{k+1})
\le g(\mb L^{k+\frac{1}{2}}, \mb S^{k+\frac{1}{2}}, t_L^{k+\frac{1}{2}}, t_S^{k+\frac{1}{2}})
\le g(\mb L^k, \mb S^k, t_L^k, t_S^k).
\end{equation}
\end{lemma}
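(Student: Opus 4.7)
\textbf{Proof proposal for Lemma \ref{lem:non_dec}.} The plan is to prove the two inequalities separately, one for the exact line search (second inequality) and one for the proximal gradient step on $\mb S$ (first inequality).

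For the second inequality, I would simply note that the feasible set of the quadratic subproblem \eqref{eqn:QP_step} is the product of two line segments, and by taking the convex combinations with weight $1$ on the $(\mb L^k, t_L^k)$ and $(\mb S^k, t_S^k)$ endpoints, the point $(\mb L^k, \mb S^k, t_L^k, t_S^k)$ itself lies in this feasible set. Since $(\mb L^{k+1/2}, \mb S^{k+1/2}, t_L^{k+1/2}, t_S^{k+1/2})$ is a minimizer of $g$ over that set, its objective cannot exceed $g(\mb L^k, \mb S^k, t_L^k, t_S^k)$.

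For the first inequality, the key observation is that, with $\mb L^{k+1/2}$ held fixed, the smooth part $h(\mb S) \doteq \tfrac{1}{2}\|\PQ[\mb L^{k+1/2}+\mb S-\mb M]\|_F^2$ has gradient $\PQ[\mb L^{k+1/2}+\mb S-\mb M]$ which is $1$-Lipschitz in $\mb S$ (since $\PQ$ is an orthogonal projection). The standard descent lemma for proximal gradient then yields
\begin{equation*}
h(\mb S^{k+1}) + \lambda_S \|\mb S^{k+1}\|_1 \le h(\mb S^{k+1/2}) + \lambda_S \|\mb S^{k+1/2}\|_1,
\end{equation*}
because $\mb S^{k+1}$ is by construction the minimizer of the quadratic majorizer $h(\mb S^{k+1/2}) + \langle \nabla h(\mb S^{k+1/2}), \mb S - \mb S^{k+1/2}\rangle + \tfrac{1}{2}\|\mb S - \mb S^{k+1/2}\|_F^2 + \lambda_S \|\mb S\|_1$, and evaluating this majorizer at $\mb S = \mb S^{k+1/2}$ gives $h(\mb S^{k+1/2}) + \lambda_S \|\mb S^{k+1/2}\|_1$. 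Combining this with the updates $\mb L^{k+1}=\mb L^{k+1/2}$, $t_L^{k+1}=t_L^{k+1/2}$, and $t_S^{k+1}=\|\mb S^{k+1}\|_1$, this translates to a bound on $g(\mb L^{k+1}, \mb S^{k+1}, t_L^{k+1}, t_S^{k+1})$.

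To close the first inequality, it remains to verify that $\lambda_S \|\mb S^{k+1/2}\|_1 \le \lambda_S t_S^{k+1/2}$, i.e., that the half-iterate satisfies the nuclear/$\ell_1$ epigraph constraint. This is where I expect the only mild subtlety: both endpoints $(\mb S^k, t_S^k)$ and $(\mb V_S^k, V_{t_S}^k)$ in the convex hull of \eqref{eqn:QP_step} satisfy $\|\cdot\|_1 \le t_S$ by construction (the Frank-Wolfe subproblem enforces this, and previous iterates inherit it inductively from initialization at $\mb S^0=\mb 0$, $t_S^0=0$), and the constraint $\|\mb S\|_1 \le t_S$ is convex, so it is preserved by the convex combination. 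The same argument applies to $(\mb L^{k+1/2}, t_L^{k+1/2})$, which is needed for the base expression of $g$. Once this observation is in place, chaining the descent bound with $\|\mb S^{k+1/2}\|_1 \le t_S^{k+1/2}$ and the equality $t_S^{k+1} = \|\mb S^{k+1}\|_1$ yields the first inequality, completing the proof.
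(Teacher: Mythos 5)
Your proposal is correct and follows essentially the same route as the paper's proof: the second inequality comes from the feasibility of $(\mb L^k,\mb S^k,t_L^k,t_S^k)$ in the line-search subproblem \eqref{eqn:QP_step}, and the first comes from the majorization property of the proximal quadratic model (the paper's $\hat g^{k+\frac{1}{2}}$ is exactly your majorizer plus the constant $\lambda_L t_L^{k+\frac{1}{2}}$) together with $\|\mb S^{k+\frac{1}{2}}\|_1 \le t_S^{k+\frac{1}{2}}$ and $t_S^{k+1}=\|\mb S^{k+1}\|_1$. The only difference is that you make explicit, via convexity of the epigraph constraint and induction from the initialization, the feasibility fact $\|\mb S^{k+\frac{1}{2}}\|_1 \le t_S^{k+\frac{1}{2}}$ that the paper uses implicitly.
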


\begin{proof}
Since $(\mb L^k, \mb S^k, t_L^k, t_S^k)$ is always feasible to the quadratic program \eqref{eqn:QP_step},
\begin{equation} \label{eqn:k_to_k_half}
g(\mb L^{k+\frac{1}{2}}, \mb S^{k+\frac{1}{2}}, t_L^{k+\frac{1}{2}}, t_S^{k+\frac{1}{2}})
\le g(\mb L^k, \mb S^k, t_L^k, t_S^k).
\end{equation}
Based on \eqref{eqn:threshold_exp}, the threshold step (line 6 in Algorithm 3) can be written as
\begin{flalign*}
\mb S^{k+1} =& \arg \min_{\mb S} \quad \hat g^{k+\frac{1}{2}}(\mb S)\doteq \frac{1}{2} \norm{\PQ [\mb L^{k+\frac{1}{2}} + \mb S^{k+\frac{1}{2}} - \mb M]}{F}^2 + \lambda_L t_L^{k+\frac{1}{2}} + \lambda_S \norm{\mb S}{1} \\
&\qquad \qquad \qquad +\<\PQ[\mb L^{k+\frac{1}{2}} + \mb S^{k+\frac{1}{2}} - \mb M], \; \mb S - \mb S^{k+\frac{1}{2}} \>
+ \frac{1}{2} \norm{\mb S - \mb S^{k+\frac{1}{2}}}{F}^2.
\end{flalign*}
The following properties of $\hat g^{k+\frac{1}{2}}(\cdot)$ can be easily verified
$$
\hat{g}^{k+\frac{1}{2}}(\mb S^{k+\frac{1}{2}}) = g(\mb L^{k+\frac{1}{2}},\mb S^{k+\frac{1}{2}}, t_L^{k+\frac{1}{2}}, \|\mb S^{k+\frac{1}{2}}\|_1)\le g(\mb L^{k+\frac{1}{2}},\mb S^{k+\frac{1}{2}}, t_L^{k+\frac{1}{2}}, t_S^{k+\frac{1}{2}});
$$
$$\emph{}
\hat{g}^{k+\frac{1}{2}}(\mb S) \ge g(\mb L^{k+\frac{1}{2}}, \mb S, t_L^{k+\frac{1}{2}}, \norm{\mb S}{1}), \quad \mbox{for any } \mb S.
$$
Therefore, we have
\begin{flalign}\label{eqn:k_half_to_k}
g(\mb L^{k+1},\mb S^{k+1}, t_L^{k+1}, t_S^{k+1}) &= g(\mb L^{k+\frac{1}{2}},\mb S^{k+1}, t_L^{k+\frac{1}{2}}, t_S^{k+1}) \le  \hat{g}^{k+\frac{1}{2}}(\mb S^{k+1}) \nonumber\\
& \le \hat{g}^{k+\frac{1}{2}}(\mb S^{k+\frac{1}{2}}) \le g(\mb L^{k+\frac{1}{2}},\mb S^{k+\frac{1}{2}}, t_L^{k+\frac{1}{2}}, t_S^{k+\frac{1}{2}})
\end{flalign}
Combining \eqref{eqn:k_to_k_half} and \eqref{eqn:k_half_to_k}, we obtain
$$
g(\mb L^{k+1}, \mb S^{k+1}, t_L^{k+1}, t_S^{k+1})
\le g(\mb L^{k+\frac{1}{2}}, \mb S^{k+\frac{1}{2}}, t_L^{k+\frac{1}{2}}, t_S^{k+\frac{1}{2}})
\le g(\mb L^k, \mb S^k, t_L^k, t_S^k).
$$
\end{proof}

Moreover, we can establish primal convergence (almost) independent of $U^0_L$ and $U^0_S$:
\begin{theorem} \label{thm:pen_main_thm}
Let $r_L^\star$ and $r_S^\star$ be the smallest radii such that
\begin{equation}
\set{(\mb L, \mb S) \;\middle|\; f(\mb L, \mb S) \le g(\mb L^0, \mb S^0, t_L^0, t_S^0) = \frac{1}{2}\norm{\PQ[\mb M]}{F}^2}\subseteq {\overline{B( r_L^\star)}} \times {\overline{B(r_S^\star)}},
\end{equation}
where $\overline{B(r)} \doteq \set{\mb X \in \reals^{m\times n} \vert \norm{\mb X}{F} \le r}$
for any $r \ge 0$.\footnote{Since the objective function in problem \eqref{eqn:penalized_prob_main} is coercive, i.e. $\lim_{k\to +\infty}f(\mb L^k, \mb S^k) = +\infty$
for any sequence $(\mb L^k, \mb S^k)$ such that $\lim_{k\to +\infty}\norm{(\mb L^k, \mb S^k)}{F} = +\infty$, clearly
$r_L^\star \ge 0$ and $r_S^\star \ge 0$ exist.} Then for the sequence $\{(\mb L^k, \mb S^k, t_L^k, t_S^k)\}$ generated by Algorithm \ref{alg:FW-T}, we have
\begin{flalign}
&g(\mb L^k, \mb S^k, t_L^k, t_S^k) - g(\mb L^\star, \mb S^\star, t_L^\star, t_S^\star) \\
&\qquad \qquad \qquad \qquad\le \frac{\min \{ 4(t_L^\star+r_L^\star)^2+4(t_S^\star+r_S^\star)^2, \; 16(U_L^0)^2+16(U_S^0)^2\}}{k+2}. \nonumber
\end{flalign}
\end{theorem}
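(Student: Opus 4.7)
The plan is to establish the two bounds in the minimum separately and then take the smaller.

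For the cruder bound $16((U_L^0)^2+(U_S^0)^2)$, I would recognize Algorithm \ref{alg:FW-T} as an instance of the general template in Algorithm \ref{alg:general_frank_wolfe}: the joint exact line search \eqref{eqn:QP_step} searches a region that contains the standard Frank--Wolfe segment $\{x^k+\gamma(v^k-x^k):\gamma\in[0,1]\}$, and the subsequent $\ell_1$-proximal step on $\mb S$ further decreases $g$ because the shrinkage operator globally minimizes a quadratic-plus-$\ell_1$ majorant of $g(\mb L^{k+\frac12},\,\cdot\,,t_L^{k+\frac12},\|\cdot\|_1)$ at $\mb S^{k+\frac12}$. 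Invoking Theorem \ref{thm:fw_primal_conv} with $L=2$ from Lemma \ref{lem:lipschitz}, together with a block-wise diameter estimate (bounding $\|\mb L-\mb L'\|_F\le 2U_L^0$, $\|\mb S-\mb S'\|_F\le 2U_S^0$, $|t_L-t_L'|\le U_L^0$, $|t_S-t_S'|\le U_S^0$ separately), then sharpens the constant from $20$ in Corollary \ref{cor:FW_penal} to $16$.

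For the refined bound $4(t_L^\star+r_L^\star)^2+4(t_S^\star+r_S^\star)^2$, the argument rests on three observations. First, Lemma \ref{lem:non_dec} gives $f(\mb L^k,\mb S^k)\le g(\mb L^k,\mb S^k,t_L^k,t_S^k)\le \tfrac12\|\PQ[\mb M]\|_F^2$ at every iterate, so $\|\mb L^k\|_F\le r_L^\star$ and $\|\mb S^k\|_F\le r_S^\star$ (and the same for the optimum $\mb L^\star,\mb S^\star$). Second, because $\nabla g$ is \emph{constant} in $(t_L,t_S)$, the second-order Taylor residual reduces to $\tfrac12\|\PQ[(\mb Y_L+\mb Y_S)-(\mb L+\mb S)]\|_F^2 \le \|\mb Y_L-\mb L\|_F^2+\|\mb Y_S-\mb S\|_F^2$, depending only on the matrix increments. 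Third, I would exploit the two-parameter line search with adaptive step sizes: take $\gamma_L=\gamma\,t_L^\star/U_L^k$ when $V_{t_L}^k=U_L^k>0$ and $\gamma_L=\gamma$ otherwise (analogously for $\gamma_S$). The homogeneity identity $\gamma_L\langle\nabla g,v_L^k\rangle=\gamma\langle\nabla g,\tilde v_L^k\rangle$ with $\tilde v_L^k=(t_L^\star\mb D_L^k,t_L^\star)$, combined with $\langle\nabla_L g,\mb L^\star\rangle\ge t_L^\star\langle\nabla_L g,\mb D_L^k\rangle$ from the dual-norm characterization of the nuclear norm, controls the linear part by $\gamma\langle\nabla g,x_L^\star-x_L^k\rangle$; simultaneously, $\gamma_L\|\mb V_L^k-\mb L^k\|_F \le \gamma(t_L^\star+r_L^\star)$ because $t_L^\star\le U_L^k$ by the adaptive update. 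Assembling these pieces yields the one-step recursion
\[
g(x^{k+1})-g(x^\star)\le(1-\gamma)(g(x^k)-g(x^\star))+\gamma^2\bigl[(t_L^\star+r_L^\star)^2+(t_S^\star+r_S^\star)^2\bigr],
\]
and a standard induction with $\gamma_k=2/(k+2)$ closes the argument (the constant $4$ matches $2LD^2$ with $L=2$ and effective $D^2=(t_L^\star+r_L^\star)^2+(t_S^\star+r_S^\star)^2$).

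The main obstacle will be reconciling the linear term when $\gamma_L\neq\gamma$: the mismatch produces a residual proportional to $(\gamma-\gamma_L)\langle\nabla g,x_L^k\rangle$ whose sign is not immediately controlled. Resolving this likely requires the dual-norm inequality $\langle\nabla g,x_L^k\rangle\ge t_L^k\,\hat g_L(\mb D_L^k)$ together with a careful induction that absorbs the residual into the convergence constant, or alternatively a case-split on the sign of $\langle\nabla g,x_L^k\rangle$ that switches adaptively between $\gamma_L=\gamma$ and $\gamma_L=\gamma t_L^\star/U_L^k$.
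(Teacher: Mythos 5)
Your overall strategy matches the paper's: the crude bound $16((U_L^0)^2+(U_S^0)^2)$ comes from the sharpened descent inequality in which the quadratic residual involves only the matrix blocks (because $g$ is linear in $(t_L,t_S)$), and the refined bound rests on exactly the three ingredients you list --- level-set boundedness giving $\norm{\mb L^k}{F}\le r_L^\star$ and $\norm{\mb S^k}{F}\le r_S^\star$ via Lemma \ref{lem:non_dec}, the rescaled vertex $t_L^\star(\mb D_L^k,1)$, and comparison with $\bm x^\star$ through the restricted linear subproblem. The gap is in how you inject the rescaling. You apply per-block step sizes $\gamma_L=\gamma\, t_L^\star/U_L^k$ to the direction $\bm v^k-\bm x^k$, producing the candidate $(1-\gamma_L)\bm x_L^k+\gamma_L\bm v_L^k$, and this leaves the residual $(\gamma-\gamma_L)\innerprod{\nabla g}{\bm x_L^k}$ whose sign, as you concede, is uncontrolled. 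Neither of your proposed repairs closes this: the dual-norm inequality $\innerprod{\nabla g}{\bm x_L^k}\ge t_L^k\,\hat g_L(\mb D_L^k)$ bounds the residual from \emph{below} where an upper bound is needed (an upper bound of the form $t_L^k(\norm{\PQ[\mb L^k+\mb S^k-\mb M]}{}+\lambda_L)$ is $O(\gamma)$ rather than $O(\gamma^2)$ and cannot be absorbed into the recurrence constant), and the case-split that reverts to $\gamma_L=\gamma$ when $\innerprod{\nabla g}{\bm x_L^k}>0$ restores the quadratic term $\gamma^2\norm{\mb V_L^k-\mb L^k}{F}^2\le\gamma^2(U_L^k+r_L^\star)^2$, which destroys the $(t_L^\star+r_L^\star)^2$ constant you are trying to prove.

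The paper's resolution is to rescale the \emph{vertex} rather than the step: it compares $g(\bm x^{k+\frac{1}{2}})$ against $g(\bm x^k+\gamma(\bm v^k_+-\bm x^k))$ with the common step $\gamma=\tfrac{2}{k+2}$, where $\bm v^k_+$ has blocks $\tfrac{t_L^\star}{U_L^k}(\mb V_L^k,V_{t_L}^k)$ and $\tfrac{t_S^\star}{U_S^k}(\mb V_S^k,V_{t_S}^k)$. Since these blocks minimize the linearization over $\set{\norm{\mb V_L}{*}\le V_{t_L}\le t_L^\star}$ and $\set{\norm{\mb V_S}{1}\le V_{t_S}\le t_S^\star}$, sets containing the corresponding blocks of $\bm x^\star$, the linear term is bounded directly by $\gamma\innerprod{\nabla g(\bm x^k)}{\bm x^\star-\bm x^k}\le\gamma(g(\bm x^\star)-g(\bm x^k))$ with no residual at all, while the quadratic term is $\gamma^2((t_L^\star+r_L^\star)^2+(t_S^\star+r_S^\star)^2)$; Lemma \ref{lem:non_dec} carries the bound from $\bm x^{k+\frac{1}{2}}$ to $\bm x^{k+1}$ and Lemma \ref{lem:recurrence} finishes. (Your step-size variant was presumably chosen to stay exactly on the segment searched by \eqref{eqn:QP_step}; the paper instead asserts feasibility of its candidate for the line search and pays nothing in the linear term.) If you reorganize your argument around the candidate $\bm x^k+\gamma(\bm v^k_+-\bm x^k)$ the proof closes immediately; as written, the refined half of the minimum is not established.
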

\begin{proof}
For notational convenience, we denote
$$
\bm x^k = (\mb L^k, \mb S^k, t_L^k, t_S^k), \; \bm x^\star = (\mb L^\star, \mb S^\star, t_L^\star, t_S^\star) \; \mbox{and} \; \bm v^k = (\mb V_L^k, \mb V_S^k, \bm V_{t_L}^k,  \bm V_{t_S}^k).
$$
For any point $\bm x = (\mb L, \mb S, t_L, t_S)\in \reals^{m\times n}\times \reals^{m \times n} \times \reals \times \reals$, we adopt the notation that $\mb L[\bm x] = \mb L$, $\mb S[\bm x] = \mb S$, $t_L[\bm x] = t_L$ and $t_S[\bm x] = t_S$.

Since $g(\bm x^k) - g(\bm x^\star) \le \frac{16(U_L^0)^2+16(U_S^0)^2}{k+2}$ can be easily established following the proof of Corollary \ref{cor:FW_penal}, below we will focus on the other part that $g(\bm x^k) - g(\bm x^\star) \le \frac{4(t_L^\star+r_L^\star)^2+4(t_S^\star+r_S^\star)^2}{k+2}$.

Let us first make two simple observations.

Since $f(\mb L^\star, \mb S^\star) \le g (\mb L^k, \mb S^k, t_L^k, t_S^k)$,
we have
\begin{equation}
U_L^{k} = g(\mb L^{k}, \mb S^{k}, t_L^k, t_S^{k})/\lambda_L \ge  t_L^\star \quad \mbox{and} \quad U_S^{k} = g(\mb L^{k}, \mb S^{k}, t_L^{k}, t_S^{k})/\lambda_S
\ge  t_S^\star.
\end{equation}
Therefore, our $U_L^k$ and $U_S^k$ always bound $t_L^\star$ and $t_S^\star$ from above.

From Lemma \ref{lem:non_dec}, $g(\mb L^k, \mb S^k, t_L^k, t_S^k)$ is non-increasing,
$$
f(\mb L^k, \mb S^k)\le g(\mb L^k, \mb S^k, t_L^k, t_S^k) \le g(\mb L^0, \mb S^0, t_L^0, t_S^0),
$$
which implies that $(\mb L^k, \mb S^k)\subseteq \overline{B(r_L^\star)} \times \overline{B(r_S^\star)}$, i.e.
$\norm{\mb L^k}{F}\le r_L^\star$ and $\norm{\mb S^k}{F}\le r_S^\star.$

Let us now consider the $k$-th iteration. Similar to the proof in \cite{harchaoui2013conditional}, we
introduce the auxiliary point $\bm v^k_+ = ( \frac{t_L^\star}{U_L^k} \mb V^k_L, \frac{t_S^\star}{U_S^k} \mb V^k_S, \frac{t_L^\star}{U_L^k} \mb V^k_{t_L}, \frac{t_S^\star}{U_S^k} \mb V^k_{t_S})$.
Then based on our argument for \eqref{eqn:pen_linear_sub}, it can be easily verified that
\begin{eqnarray}
% \nonumber to remove numbering (before each equation)
(\mb L[\bm v^k_+], t_L[\bm v^k_+])  &\in& \arg \min_{\norm{\mb V_L}{*} \le V_{t_L}\le t_L^\star} g_L(\mb V_L, V_{t_L}) \label{eqn:aux_linear_sub_1}\\
(\mb S[\bm v^k_+], t_S[\bm v^k_+])  &\in& \arg \min_{\norm{\mb V_S}{1} \le V_{t_S}\le t_S^\star} g_S(\mb V_S, V_{t_S}) \label{eqn:aux_linear_sub_2}.
\end{eqnarray}
Recall $\gamma = \frac{2}{k+2}$. We have
\begin{eqnarray*}
% \nonumber to remove numbering (before each equation)
  &&g(\bm x^{k+\frac{1}{2}}) \\
  &\le& g(\bm x^k + \gamma (\bm v^k_+ - \bm x^k)) \\
  &\le& g(\bm x^k)+\gamma \< \nabla g(\bm x_k),\; \bm v^k_+ - \bm x^k\>         + \gamma^2\left( \norm{\mb L[\bm v^k_+] - \mb L[\bm x^k]}{F}^2 + \norm{\mb S[\bm v^k_+] - \mb S[\bm x^k]}{F}^2 \right)\\
  &\le& g(\bm x^k)+ \gamma \left(g_L(\mb L[\bm v^k_+ - \bm x^k], t_L [\bm v^k_+ - \bm x^k]) + g_S(\mb S[\bm v^k_+ - \bm x^k], t_S[\bm v^k_+ - \bm x^k]) \right) \\
   &&  \hspace{70mm}      + \gamma^2  \left((t_L^\star+  r_L^\star)^2+ (t_S^\star+  r_S^\star)^2\right)\\
     &\le& g(\bm x^k)+ \gamma \left(g_L(\mb L[\bm x^\star - \bm x^k], t_L [\bm x^\star - \bm x^k]) + g_S(\mb S[\bm  x^\star- \bm x^k], t_S[\bm  x^\star - \bm x^k]) \right) \\
   &&  \hspace{70mm}      + \gamma^2  \left((t_L^\star+  r_L^\star)^2+ (t_S^\star+  r_S^\star)^2\right)\\
   &=& g(\bm x^k)+\gamma \< \nabla g(\bm x^k),\; \bm  x^\star- \bm x^k\>
         + \gamma^2  \left((t_L^\star+  r_L^\star)^2+ (t_S^\star+  r_S^\star)^2\right)\\
  &\le& g(\bm x^k)+\gamma \left(g(\bm x^\star)-g(\bm x^k)\right)
         + \gamma^2  \left((t_L^\star+  r_L^\star)^2+ (t_S^\star+  r_S^\star)^2\right),\\
\end{eqnarray*}
where the first inequality holds since $\bm x^k + \gamma (\bm v^k_+ - \bm x^k)$ is feasible
to the quadratic program \eqref{eqn:QP_step} while $\bm x^{k+\frac{1}{2}}$ minimizes it;
the third inequality is due to the facts that
\begin{eqnarray*}
% \nonumber to remove numbering (before each equation)
  \norm{\mb L[\bm v^k_+] - \mb L[\bm x^k]}{F} &\le& \norm{\mb L[\bm v^k_+]}{F} + \norm{\mb L[\bm x^k]}{F}  \le \norm{\mb L[\bm v^k_+]}{*} + \norm{\mb L[\bm x^k]}{F} \le t_L^\star+  r_L^\star \\
  \norm{\mb S[\bm v^k_+] - \mb S[\bm x^k]}{F} &\le& \norm{\mb S[\bm v^k_+]}{F} + \norm{\mb S[\bm x^k]}{F} \le \norm{\mb S[\bm v^k_+]}{1} + \norm{\mb S[\bm x^k]}{F}\le t_S^\star+  r_S^\star;
\end{eqnarray*}
the fourth inequality holds as $(\mb L[\bm x^\star], t_L[\bm x^\star])$ and $(\mb S[\bm x^\star], t_S[\bm x^\star])$
are respectively feasible to \eqref{eqn:aux_linear_sub_1} and \eqref{eqn:aux_linear_sub_2}
while $(\mb L[\bm v^k_+], t_L[\bm v^k_+])$ and $(\mb S[\bm v^k_+], t_S[\bm v^k_+])$ respectively
minimize \eqref{eqn:aux_linear_sub_1} and \eqref{eqn:aux_linear_sub_2};

Therefore, we obtain
$$
g(\bm x^{k+\frac{1}{2}})  - g(\bm x^\star) \le (1-\gamma) \left(g(\bm x^k)  - g(\bm x^\star) \right)
+ \gamma^2 \left((t_L^\star+  r_L^\star)^2+ (t_S^\star+  r_S^\star)^2\right).
$$
Moreover, by Lemma \ref{lem:non_dec}, we have
$$
g(\bm x^{k+1}) \le g(\bm x^{k+\frac{1}{2}}).
$$
Thus, we obtain the recurrence
$$
g(\bm x^{k+1})  - g(\bm x^\star) \le (1-\gamma) \left(g(\bm x^k)  - g(\bm x^\star) \right)
+ \gamma^2 \left((t_L^\star+  r_L^\star)^2+ (t_S^\star+  r_S^\star)^2\right).
$$
Applying mathematical induction, one can easily obtain that
$$
g(\mb L^k, \mb S^k, t_L^k, t_S^k) - g(\mb L^\star, \mb S^\star, t_L^\star, t_S^\star) \le \frac{4\left((t_L^\star+r_L^\star)^2+(t_S^\star+r_S^\star)^2\right)}{k+2}.
$$
\end{proof}

Since $U_L^0$ and $U_S^0$ are quite crude upper bounds for $t_L^\star$ and $t_S^\star$, $16(U_L^0)^2+16(U_S^0)^2$ could be much larger than $4(t_L^\star+r_L^\star)^2+4(t_S^\star+r_S^\star)^2$. Therefore, this primal convergence results depend on $U_L^0$ and $U_S^0$ in a very weak manner.
%Note that the constant on the rate of primal convergence only depends on several intrinsic parameters, namely $U_L^\star$, $U_S^\star$,  $R_1^\star$ and $R_1^\star$.

However, the convergence result of the surrogate duality gap $d(\bm x^k)$ still hinges upon the upper bounds:
\begin{theorem}\label{thm:pen_dual_thm}
 Let $\bm x^k$ denote  $(\mb L^k,\mb S^k, t_L^k, t_S^k)$ generated by Algorithm \ref{alg:FW-T}. Then for any $K\ge 1$, there exists $1\le \tilde k \le K$ such that
\begin{equation}
g(\bm x^{\tilde k}) -  g(\bm x^\star) \le
d(\bm x^{\tilde k}) \le \frac{48 \left( (U_L^0)^2 + (U_S^0)^2  \right)}{K+2}.
\end{equation}
\end{theorem}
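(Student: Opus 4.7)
The plan is to recognize Algorithm \ref{alg:FW-T} as an instance of the generalized Frank--Wolfe descent scheme (Algorithm \ref{alg:general_frank_wolfe}) applied to problem \eqref{eqn:penal_prob_epi_bd} on the \emph{initial} bounded feasible set $\mathcal{D}_0 = \set{(\mb L,\mb S,t_L,t_S) \mid \norm{\mb L}{*} \le t_L \le U_L^0,\; \norm{\mb S}{1} \le t_S \le U_S^0}$, and then to reuse the proof of Theorem \ref{thm:fw_dual} essentially verbatim.

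First I would verify that every iterate $\bm x^k$ and every Frank--Wolfe atom $\bm v^k$ stays inside $\mathcal{D}_0$. By Lemma \ref{lem:non_dec}, $g$ is non-increasing along the iterates, so the adaptive rules in lines \ref{eqn:update_U_L}--\ref{eqn:update_U_S} force $U_L^k \le U_L^0$ and $U_S^k \le U_S^0$ for all $k$, hence $\mathcal{D}_k \subseteq \mathcal{D}_0$. Next I would check the descent property $g(\bm x^{k+1}) \le g(\bm x^k + \gamma(\bm v^k - \bm x^k))$ for $\gamma = 2/(k+2)$: the exact line search in \eqref{eqn:QP_step} gives this with $\bm x^{k+1/2}$ in place of $\bm x^{k+1}$ (since $\bm x^k + \gamma(\bm v^k - \bm x^k)$ lies in the search segment), while the soft-thresholding step on $\mb S$ minimizes a quadratic majorizer of $g(\mb L^{k+1/2}, \cdot)$ and hence can only further decrease $g$, exactly as in the FW-P analysis. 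Combining these with $L$-smoothness ($L = 2$ by Lemma \ref{lem:lipschitz}) and a squared-diameter bound $D_0^2 \le 4\bigl((U_L^0)^2 + (U_S^0)^2\bigr)$ produces the standard per-step estimate
\begin{equation*}
g(\bm x^{k+1}) \;\le\; g(\bm x^k) - \gamma\, d(\bm x^k) + \tfrac{L\gamma^2}{2}\, D_0^2,
\end{equation*}
from which the telescoping/induction argument in the proof of Theorem \ref{thm:fw_dual} carries over unchanged to give $d(\bm x^{\tilde k}) \le 6 L D_0^2/(K+2) = 48\bigl((U_L^0)^2 + (U_S^0)^2\bigr)/(K+2)$. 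The diameter bound above, sharper than the one stated in Lemma \ref{lem:diameter}, follows from $\norm{\mb L}{F} \le \norm{\mb L}{*} \le t_L$ and a Cauchy--Schwarz expansion of $\norm{(\mb L, t_L) - (\mb L', t_L')}{F}^2$ on each block.

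The main subtlety I anticipate is that Algorithm \ref{alg:FW-T} computes $\bm v^k$ by minimizing the linearization over the \emph{shrunk} region $\mathcal{D}_k$ rather than over $\mathcal{D}_0$, so one must argue that $d(\bm x^k) = \innerprod{\bm x^k - \bm v^k}{\nabla g(\bm x^k)}$ computed against this $\bm v^k$ (a)~still upper bounds $g(\bm x^k) - g(\bm x^\star)$, and (b)~is the right quantity to feed into the Theorem \ref{thm:fw_dual} argument. For (a), since $U_L^k \ge t_L^\star$ and $U_S^k \ge t_S^\star$, the optimizer $\bm x^\star$ of \eqref{eqn:penal_prob_epi_bd} remains feasible for every $\mathcal{D}_k$, so the one-line derivation $g(\bm x^k) - g(\bm x^\star) \le -\innerprod{\bm x^\star - \bm x^k}{\nabla g(\bm x^k)} \le -\min_{\bm v \in \mathcal{D}_k}\innerprod{\bm v - \bm x^k}{\nabla g(\bm x^k)} = d(\bm x^k)$ goes through. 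For (b), the per-step descent inequality only uses $\bm x^k, \bm v^k \in \mathcal{D}_0$ (for the diameter bound) together with the line-search/proximal inequalities, neither of which requires $\bm v^k$ to be globally optimal over $\mathcal{D}_0$; hence the adaptive bookkeeping is fully compatible with the analysis.
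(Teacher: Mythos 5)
Your proposal is correct and follows essentially the same route as the paper: establish the per-step inequality $\Delta^{k+1} \le \Delta^k - \gamma\, d(\bm x^k) + 4\gamma^2\bigl((U_L^0)^2+(U_S^0)^2\bigr)$ and rerun the argument of Theorem \ref{thm:fw_dual} with this constant, which yields the stated $48\bigl((U_L^0)^2+(U_S^0)^2\bigr)/(K+2)$. The only cosmetic difference is how that constant is obtained---the paper exploits the linearity of $g$ in $(t_L,t_S)$ so that only the matrix blocks enter the quadratic remainder (see \eqref{eqn:cor_careful}), whereas you combine $L=2$ smoothness with a sharpened diameter bound $D_0^2 \le 4\bigl((U_L^0)^2+(U_S^0)^2\bigr)$; both give the identical number, and your explicit treatment of the shrinking feasible sets $\mathcal{D}_k$ (feasibility of $\bm x^\star$, validity of the gap bound) makes rigorous a point the paper leaves implicit.
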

\begin{proof}
Define $\Delta^k = g(\bm x^k) - g(\bm x^\star)$. Following \eqref{eqn:cor_careful}, we have
\begin{equation}
\Delta^{k+1} \le \Delta^k + \gamma \innerprod{\nabla g(\bm x^k)}{\bm v^k-\bm x^k}+ 4\gamma^2\left((U_L^0)^2 + (U_S^0)^2\right). \label{eqn:thm5_dual}
\end{equation}
Then following the arguments in the proof of Theorem 2 with \eqref{eqn:fw_pf_dual_conv_1} replaced by \eqref{eqn:thm5_dual}, we can easily obtain the result.
\end{proof}

\paragraph{\textbf{Stopping criterion}} Compared to the convergence of $g(\bm x^k)$ (Theorem \ref{thm:pen_main_thm}), the convergence result for $d(\bm x^k)$ can be much slower (Theorem \ref{thm:pen_dual_thm}). Therefore, here the surrogate duality gap $d(\cdot)$ is not that suitable to serve as a stopping criterion. Consequently, in our implementation, we terminate Algorithm \ref{alg:FW-T} if
\begin{equation}\label{eqn:stopping}
%\left(g(\bm x^{k+1})- g(\bm x^{k})\right)/{\norm{\PQ[\mb M_0]}{F}^2} \le \eps,
\left|g(\bm x^{k+1})- g(\bm x^{k})\right|/g(\bm x^{k}) \le \eps,
\end{equation}
for five consecutive iterations.

\section{Numerical Experiments} \label{sec:num}
In this section, we report numerical results obtained by applying our FW-T method (Algorithm \ref{alg:FW-T}) to problem \eqref{eqn:SPCP} with real data arising from applications considered in \cite{candes2011robust}:
{\em foreground/background separation in surveillance videos}, and {\em shadow and specularity removal from face images}.

Given observations $\set{\mb M_0(i,j) \;\middle|\; (i,j)\in \Omega}$, where $\Omega \subseteq \set{1,\ldots, m}\times \set{1,\ldots, n}$ is the index set of the observable entries in $\mb M_0 \in \reals^{m\times n}$, we assigned weights $$\lambda_L = \delta \rho \norm{\PO[\mb M_0]}{F} \quad \mbox{and} \quad \lambda_S = \delta \sqrt \rho \norm{\PO[\mb M_0]}{F}/\sqrt{\max(m,n)}$$ to problem \eqref{eqn:SPCP}, \footnote{The ratio $\lambda_L/\lambda_S = \sqrt{\rho\max(m,n)}$ follows the suggestion in \cite{candes2011robust}. For applications in computer vision at least, our choices in $\lambda_L$ and $\lambda_S$ seem to be quite robust, although it is possible to improve the performance by making slight adjustments to our current settings of $\lambda_L$ and $\lambda_S$.} where $\rho = |\Omega|/{mn}$ and $\delta $ is chosen as $0.001$ for the surveillance problem and $0.01$ for the face problem.

We compared our FW-T method with the popular first-order methods {\em iterative soft-thresholding algorithm} (ISTA) and {\em fast iterative soft-thresholding algorithm} (FISTA) \cite{beck2009fast}, both of whose implementations used partial {\em singular value decomposition} (SVD). In subsection \ref{app:alg}, we provided detailed descriptions and implementations of ISTA and FISTA.

We set $\eps = 10^{-3}$ in FW-T's stopping criterion \eqref{eqn:stopping},\footnote{As discussed in \cite{yang2011alternating, yang2013linearized}, with noisy data, solving optimization problems to high accuracy does not necessarily improve the recovery quality. Consequently, we set $\eps$ to a modest value.} and terminated ISTA and FISTA whenever they reached the objective value returned by the FW-T method.\footnote{All codes are available at: https://sites.google.com/site/mucun1988/publi} All the experiments were conducted on a computer with Intel Xeon E5-2630 Processor (12 cores at 2.4 GHz), and 64GB RAM running MATLAB R2012b (64 bits).

\subsection{ISTA \& FISTA for problem \eqref{eqn:SPCP}}\label{app:alg}
{\em Iterative soft-thresholding algorithm} (ISTA), is an efficient way to tackle unconstrained nonsmooth optimization problem especially at large scale. ISTA follows the general idea by iteratively minimizing an upper bound of the original objective. In particular, when applied to problem  \eqref{eqn:SPCP} of our interest, ISTA updates $(\mb L, \mb S)$ for the $k$-th iteration by solving
\begin{flalign} \label{eqn:ista}
(\mb L^{k+1}, \mb S^{k+1}) = \arg \min_{\mb L, \mb S} \;
&\innerprod{\left( \begin{array}{ll} \nabla_{\mb L} l(\mb L^k, \mb S^k)\\\nabla_{\mb S} l(\mb L^k, \mb S^k)   \end{array} \right)}
{\left( \begin{array}{lll} \mb L - \mb L^k  \\ \mb S - \mb S^k  \end{array} \right)}
+ \\
& \qquad  \qquad \quad \frac{L_f}{2}\norm{\left( \begin{array}{lll} \mb L \\ \mb S \end{array} \right)-
\left( \begin{array}{lll} \mb L^k \\ \mb S^k \end{array} \right)}{F}^2
+\lambda_L \norm{\mb L}{*} + \lambda_S \norm{\mb S}{1}.  \nonumber
\end{flalign}
Here $L_f = 2$ denotes the Lipschitz constant of $\nabla l(\mb L, \mb S)$ with respect to $(\mb L, \mb S)$,
and $ \nabla_{\mb L} l(\mb L^k, \mb S^k) =  \nabla_{\mb S} l(\mb L^k, \mb
S^k) = \PO [\mb L^k + \mb S^k - \mb M]$. Since $\mb L$ and $\mb S$ are
decoupled in \eqref{eqn:ista}, equivalently we have
\begin{eqnarray}
% \nonumber to remove numbering (before each equation)
  \mb L^{k+1} &=& \arg  \min_{\mb L} \norm{\mb L - \left( \mb L^k - \frac{1}{2} \PO[\mb L^k + \mb S^k - \mb M] \right)}{F}^2 + \lambda_L \norm{\mb L}{*}, \label{eqn:ista_nuc}\\
  \mb S^{k+1} &=& \arg  \min_{\mb S} \norm{\mb S - \left( \mb S^k - \frac{1}{2} \PO[\mb L^k + \mb S^k - \mb M] \right)}{F}^2 + \lambda_S \norm{\mb S}{1}. \label{eqn:ista_l1}
\end{eqnarray}
The solution to problem \eqref{eqn:ista_l1} can be given explicitly in terms of the proximal mapping of
$\norm{\cdot}{1}$ as introduced in Section 2.2,  i.e.,
$$
\mb S^{k+1} = \mc T_{\lambda_S/2}\left[\mb S^k - \frac{1}{2} \PO[\mb L^k + \mb S^k - \mb M]  \right].
$$
For a matrix $\mb X$ and any $\tau \ge 0$, let $\mc D_{\tau}(\mb X)$ denote the singular value thresholding operator $\mc D_{\tau} (\mb X) = \mb U \mc
T_{\tau}(\mb{\Sigma})\mb V^{\t}$, where $\mb X = \mb U \mb \Sigma \mb V^{\t}$ is
the singular value decomposition of $\mb X$. It is not difficult to show \cite{cai2010singular,ma2011fixed} that the solution to problem \eqref{eqn:ista_nuc} can be given explicitly by
$$
\mb L^{k+1} = \mc D_{\lambda_L/2}\left[\mb L^k - \frac{1}{2} \PO[\mb L^k + \mb S^k - \mb M]  \right].
$$
Algorithm \ref{alg:ista} summarizes our ISTA implementation for problem \eqref{eqn:SPCP}.

\begin{algorithm}[H]
   \caption{ISTA for problem \eqref{eqn:SPCP}}
   \label{alg:ista}
\begin{algorithmic}[1]
   \STATE {\bfseries Initialization:} $\mb L^0 = \mb 0$, $\mb S^0 = \mb 0$;
   \FOR{$k=0,\; 1,\; 2, \; \cdots$}
   \STATE $\mb L^{k+1} = \mc D_{\lambda_L/2}\left[ \mb L^k - \frac{1}{2} \PO [\mb L^k + \mb S^k - \mb M]   \right]$;
   \STATE $\mb S^{k+1} = \mc T_{\lambda_S/2}\left[ \mb S^k - \frac{1}{2} \PO [\mb L^k + \mb S^k - \mb M]   \right]$;
   \ENDFOR
\end{algorithmic}
\end{algorithm}
Regarding ISTA's speed of convergence, it can be proved that $f(\mb L^k, \mb
S^k) - f^\star = O(1/k)$, where $f^\star$ denotes the optimal value of
problem \eqref{eqn:SPCP}.

{\em Fast iterative soft-thresholding algorithm} (FISTA)
introduced in \cite{beck2009fast}, is an accelerated version
of ISTA, which incorporate a momentum step borrowed from Nesterov's optimal gradient scheme
\cite{nesterov1983method}. For FISTA, a better convergence result, $f(\mb L^k, \mb S^k) - f^\star = O(1/k^2)$, can be
achieved with a cost per iteration that is comparable to ISTA. Algorithm
\ref{alg:fista} summarizes our FISTA
implementation for problem \eqref{eqn:SPCP}.

\begin{algorithm}[H]
   \caption{FISTA for problem \eqref{eqn:SPCP}}
   \label{alg:fista}
\begin{algorithmic}[1]
   \STATE {\bfseries Initialization:} $\hat {\mb L}^0  = \mb L^0 = \mb 0$, $\hat {\mb S}^0  = \mb S^0 = \mb 0$, $t_0 =1$;
   \FOR{$k=0,\; 1,\; 2, \; \cdots$}
   \STATE $\mb L^{k+1} = \mc D_{\lambda_L/2}\left[ \hat{\mb L}^k - \frac{1}{2} \PO [\hat{\mb L}^k + \hat{\mb S}^k - \mb M]   \right]$;
   \STATE $\mb S^{k+1} = \mc T_{\lambda_S/2}\left[ \hat{\mb S}^k - \frac{1}{2} \PO [\hat{\mb L}^k + \hat{\mb S}^k - \mb M]   \right]$;
   \STATE $t^{k+1} = \frac{1+\sqrt{1+4(t^k)^2}}{2}$;
   \STATE $\hat {\mb L}^{k+1} = \mb L^{k+1} + \frac{t^k-1}{t^{k+1}}(\mb L^{k+1} - \mb L^k)$;
   \STATE $\hat {\mb S}^{k+1} = \mb S^{k+1} + \frac{t^k-1}{t^{k+1}}(\mb S^{k+1} - \mb S^k)$;
   \ENDFOR
\end{algorithmic}
\end{algorithm}
\paragraph{Partial SVD} In each iteration of either ISTA or FISTA, we
only need those singular values that are larger than $\lambda_S/2$ and their
corresponding singular vectors. Therefore, a partial SVD can be utilized to reduce
the computational burden of a full SVD. Since most partial SVD
software packages (e.g. PROPACK \cite{larsen2004propack}) require specifying in advance the number of top singular values and singular vectors to compute, we heuristically
determine this number (denoted as $sv^k$ at iteration $k$). Specifically,
let $d = \min \{m,n\}$, and $svp^k$ denote the number of computed singular
values that were larger than $\lambda_S/2$ in the $k$-th iteration. Similar to \cite{tao2011recovering}, in our
implementation, we start with $sv^0 = d/10$, and adjust $sv^k$ dynamically as
follows:
$$
sv^{k+1} = \begin{cases} \min\{svp^k+1, d\} &\mbox{if } svp^k < sv^k \\
\min\{svp^k+\mbox{round}(0.05d),d \} &\mbox{otherwise}. \end{cases}
$$

\subsection{Foreground-background separation in surveillance video}

In surveillance videos, due to the strong correlation between frames, it is natural to model the background as low rank; while  foreground objects, such as cars or pedestrians, that normally occupy only a fraction of the video, can be treated as sparse. So, if we stack each frame as a column in the data matrix $\mb M_0$, it is reasonable to assume that $\mb M_0 \approx \mb L_0 + \mb S_0$, where $\mb L_0$ captures the background and $\mb S_0$ represents the foreground movements. Here, we solved problem \eqref{eqn:SPCP} for videos introduced in \cite{li2004statistical} and \cite{jacobs2007consistent}. The observed entries were sampled uniformly with ratio $\rho$ chosen respectively as $1$, $0.8$ and $0.6$.

Table \ref{tab:surv} summarizes the numerical performances of FW-T, ISTA and FISTA in terms of the iteration number and running time (in seconds).
As can be observed, our FW-T method is more efficient than ISTA and FISTA, and the advantage becomes more prominent as the size of the data grows and the observations are more compressed (with smaller sampling ratio $\rho$).
Even though the FW-T method took more iterations than FISTA and in many cases than ISTA, it took less time in many cases but one due to its low per-iteration cost. To illustrate this more clearly, in Figure \ref{fig:time_per_iter}, we plot the per-iteration cost of these three methods on the Airport and Square videos as a function of the number of frames. The computational cost of FW-T scales linearly with the size of the data, whereas the cost of the other methods increases superlinearly.
Another observation is that as the number of measurements decreases, the iteration numbers of both ISTA and FISTA methods grow substantially, while those of the FW-T method remain quite stable. This explains the more favorable behavior of the FW-T method when $\rho$ is small.
In Figure \ref{fig:video}, frames of the original videos, the backgrounds and the foregrounds produced by the FW-T method are presented, and the separation achieved is quite satisfactory.

\begin{figure}[tb]
\centerline{
\begin{minipage}{3in}
\centerline{\includegraphics[width=2.5in]{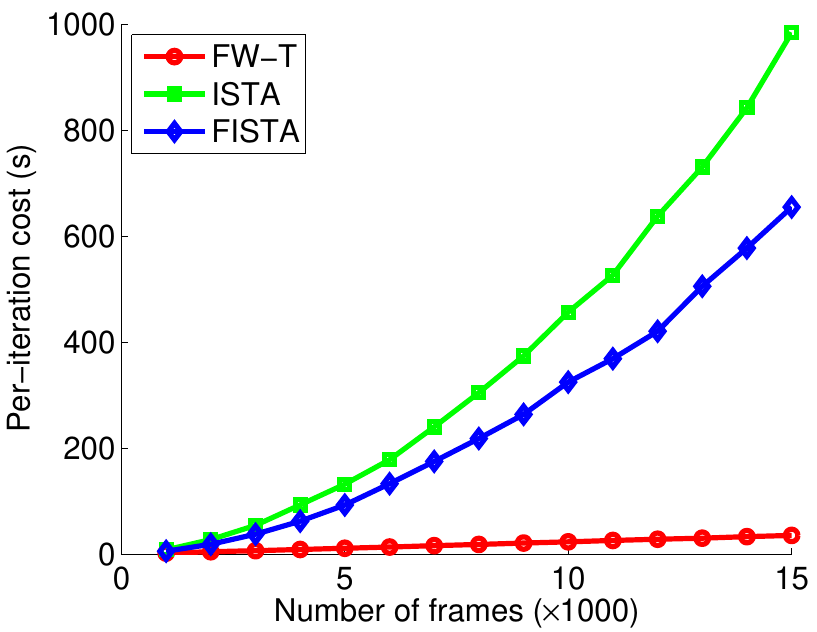}}
\end{minipage}
\hspace{-5mm}
\begin{minipage}{3in}
\centerline{\includegraphics[width=2.5in]{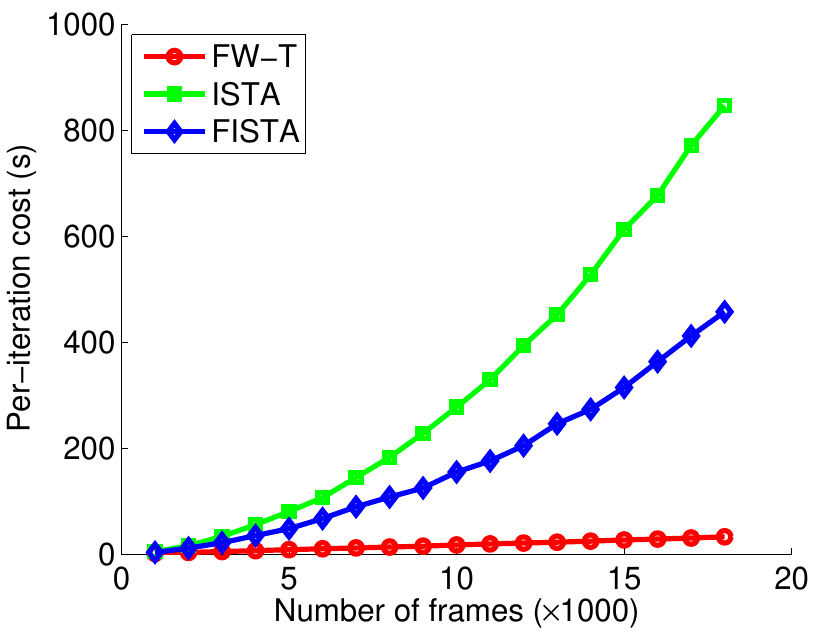}}
\end{minipage}
}
\vspace{3mm}
\centerline{
\begin{minipage}{3in}
\centerline{\textbf{Airport}}
\end{minipage}
\hspace{-5mm}
\begin{minipage}{3in}
\centerline{\textbf{Square}}
\end{minipage}
}
%\centerline{\includegraphics[width=2in]{fig/timeIter.pdf} }
\caption{\textbf{Per-iteration cost vs. the number of frames in Airport and Square videos with full observation.} The per-iteration cost of our FW-T method grows linearly with the size of data, in contrast with the superlinear per-iteration cost of ISTA and FISTA. That makes the FW-T method more advantageous or may even be the only feasible choice for large problems.}
\label{fig:time_per_iter}
\end{figure}

\begin{figure}[htb!]
\centerline{
\begin{minipage}{2.7in}
\centerline{\includegraphics[width=2.4in,height=0.6in]{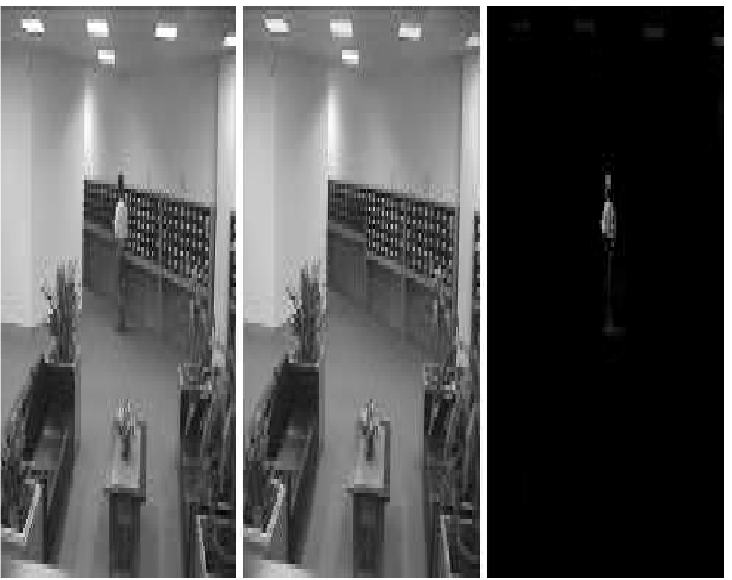}}
\end{minipage}
\begin{minipage}{2.7in}
\centerline{\includegraphics[width=2.4in,height=0.6in]{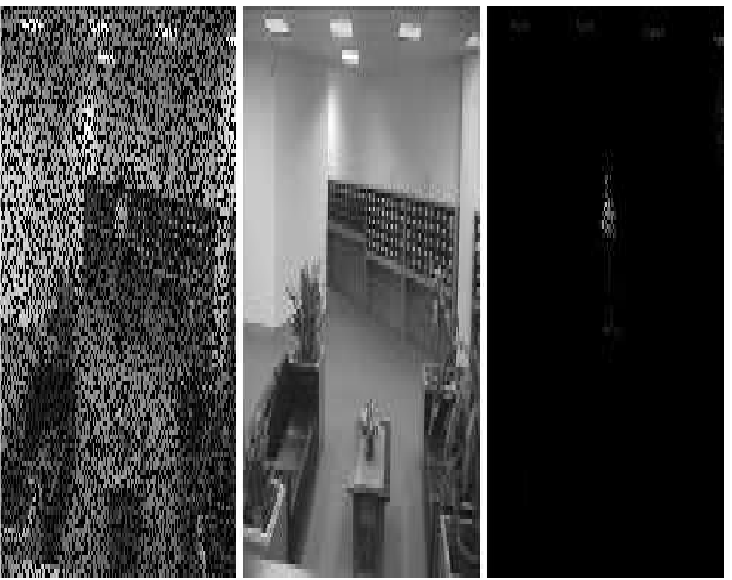}}
\end{minipage}
}
\vspace{1.5mm}
\centerline{
\begin{minipage}{2.7in}
\centerline{\includegraphics[width=2.4in,height=0.6in]{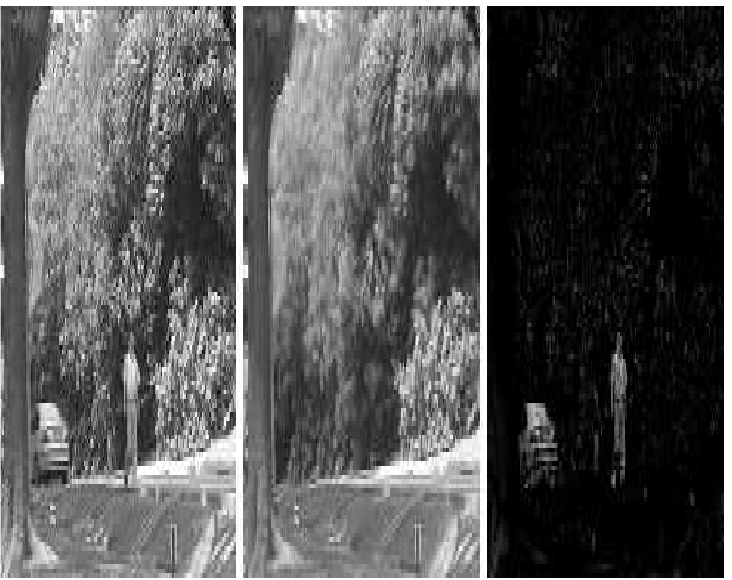}}
\end{minipage}
\begin{minipage}{2.7in}
\centerline{\includegraphics[width=2.4in,height=0.6in]{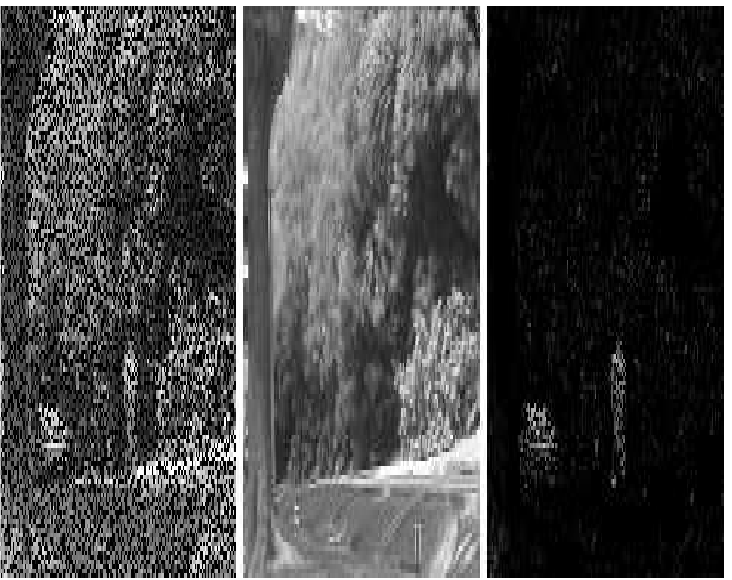}}
\end{minipage}
}
\vspace{1.5mm}
\centerline{
\begin{minipage}{2.7in}
\centerline{\includegraphics[width=2.4in,height=0.6in]{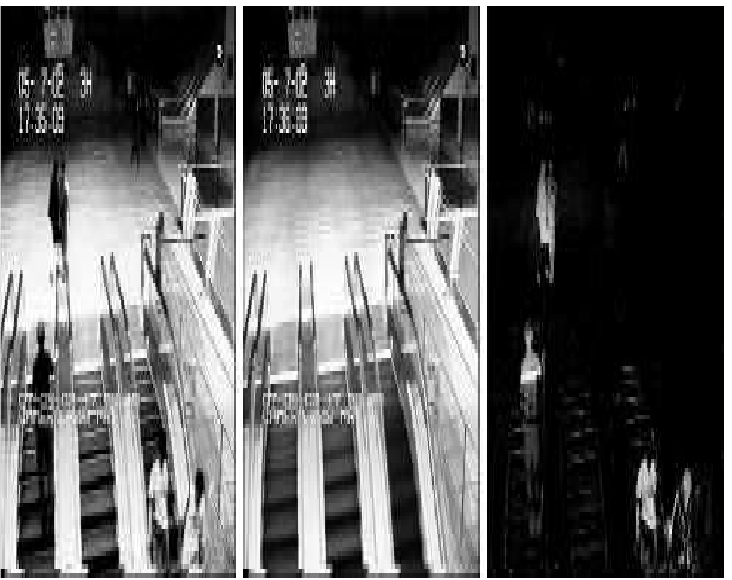}}
\end{minipage}
\begin{minipage}{2.7in}
\centerline{\includegraphics[width=2.4in,height=0.6in]{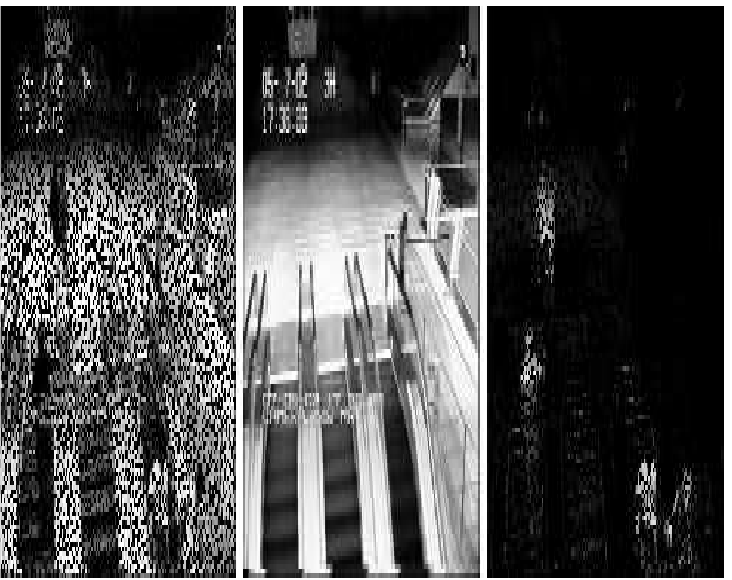}}
\end{minipage}
}
\vspace{1.5mm}
\centerline{
\begin{minipage}{2.7in}
\centerline{\includegraphics[width=2.4in,height=0.6in]{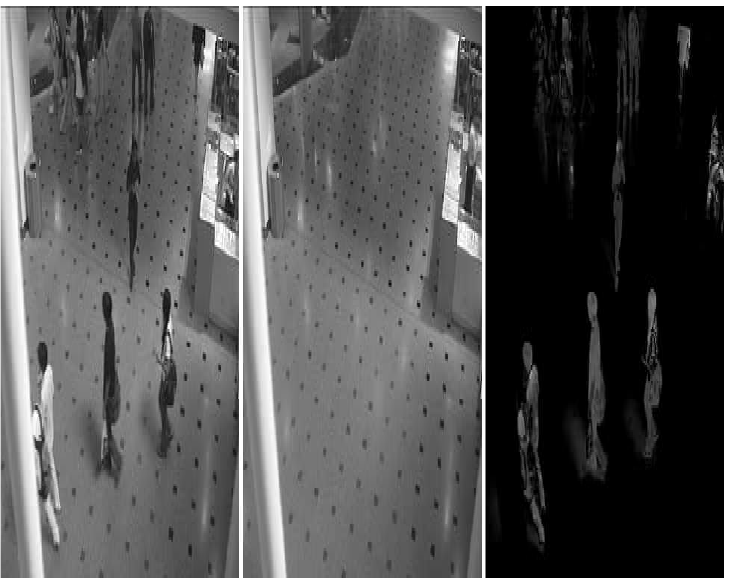}}
\end{minipage}
\begin{minipage}{2.7in}
\centerline{\includegraphics[width=2.4in,height=0.6in]{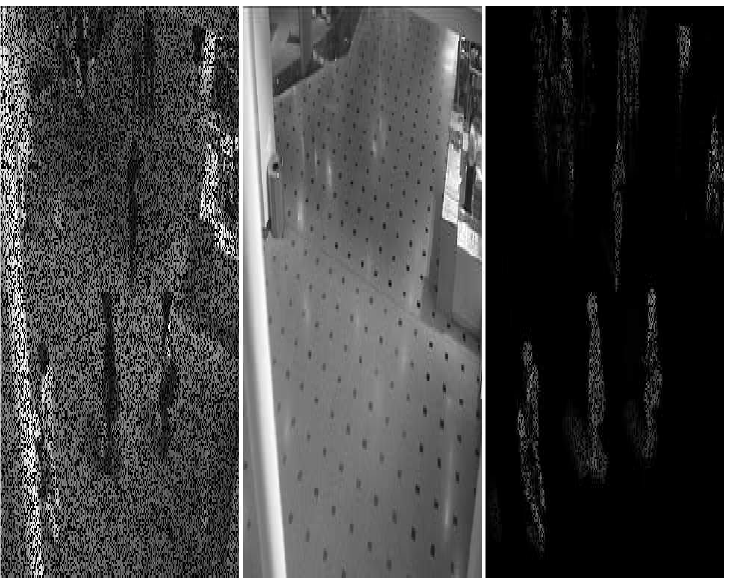}}
\end{minipage}
}
\vspace{1.5mm}
\centerline{
\begin{minipage}{2.7in}
\centerline{\includegraphics[width=2.4in,height=0.6in]{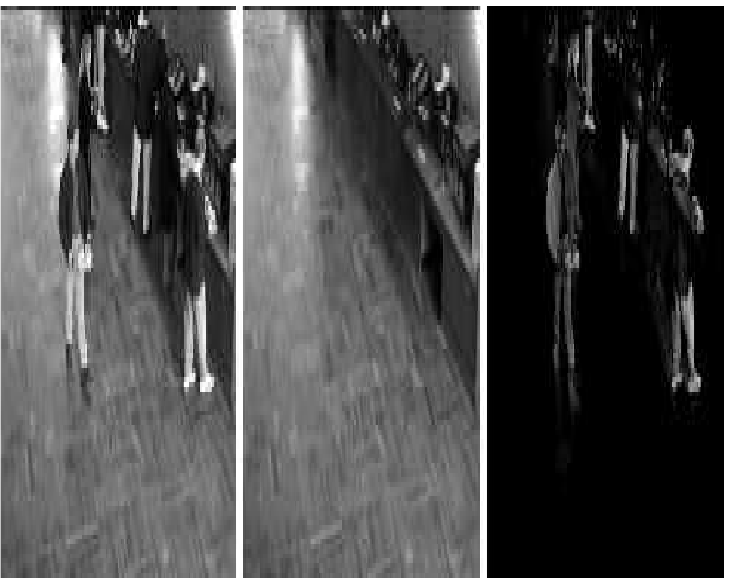}}
\end{minipage}
\begin{minipage}{2.7in}
\centerline{\includegraphics[width=2.4in,height=0.6in]{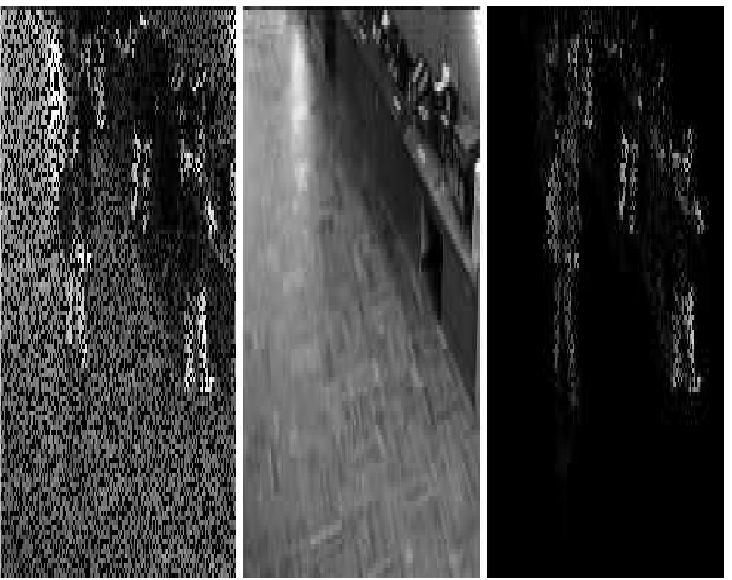}}
\end{minipage}
}
\vspace{1.5mm}
\centerline{
\begin{minipage}{2.7in}
\centerline{\includegraphics[width=2.4in,height=0.6in]{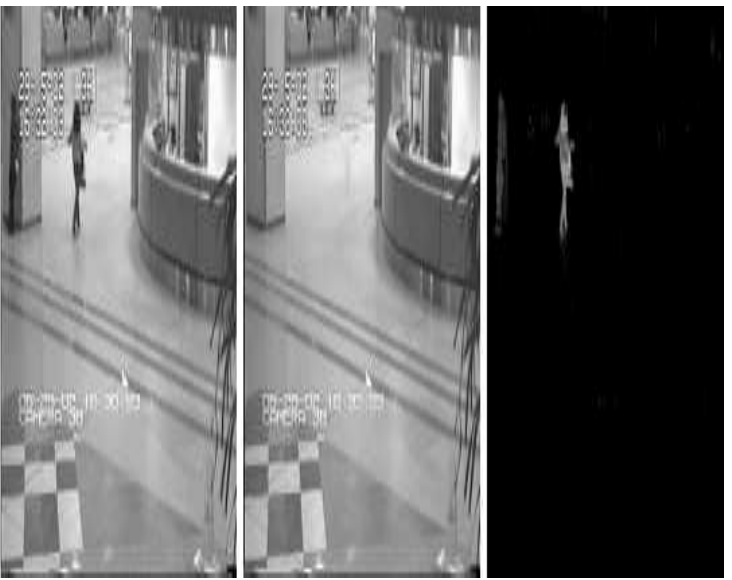}}
\end{minipage}
\begin{minipage}{2.7in}
\centerline{\includegraphics[width=2.4in,height=0.6in]{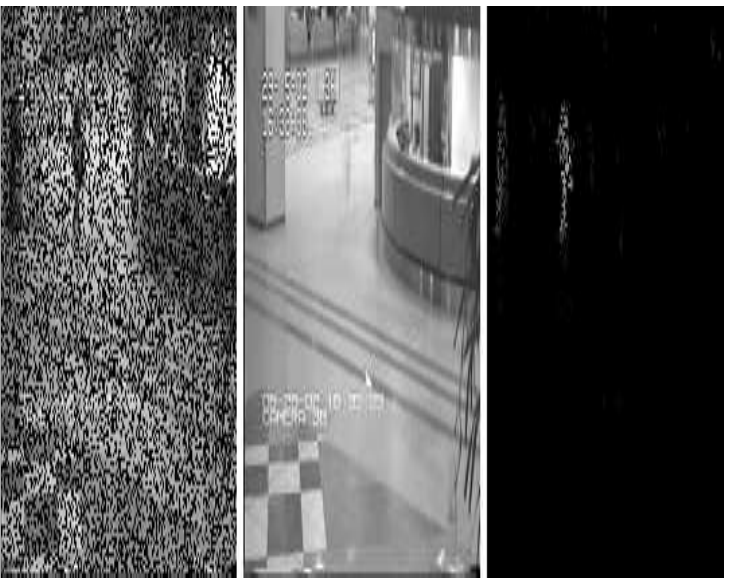}}
\end{minipage}
}
\vspace{1.5mm}
\centerline{
\begin{minipage}{2.7in}
\centerline{\includegraphics[width=2.4in,height=0.6in]{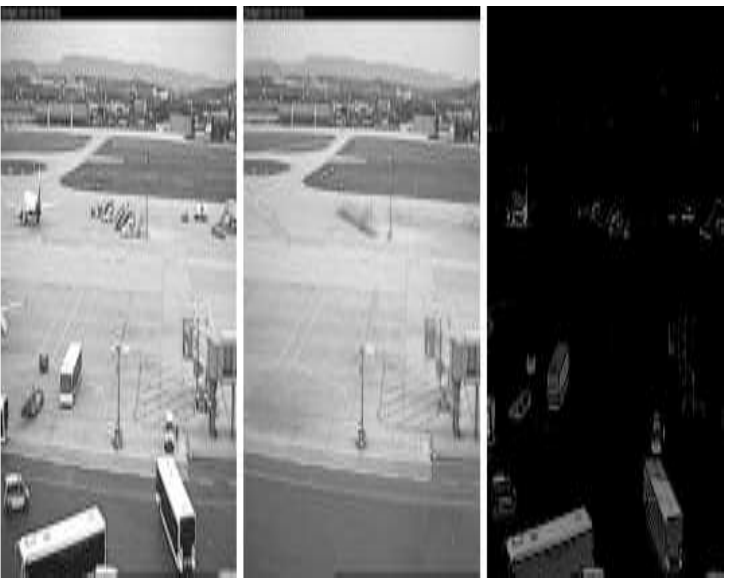}}
\end{minipage}
\begin{minipage}{2.7in}
\centerline{\includegraphics[width=2.4in,height=0.6in]{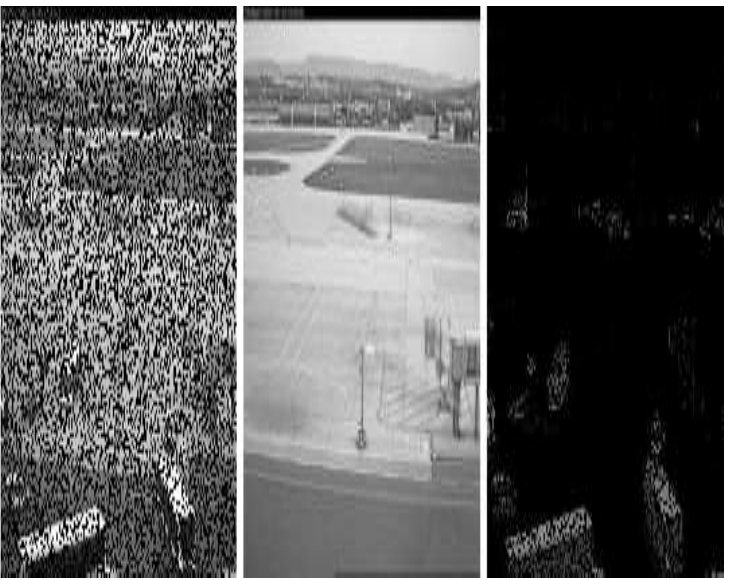}}
\end{minipage}
}
\vspace{1.5mm}
\centerline{
\begin{minipage}{2.7in}
\centerline{\includegraphics[width=2.4in,height=0.6in]{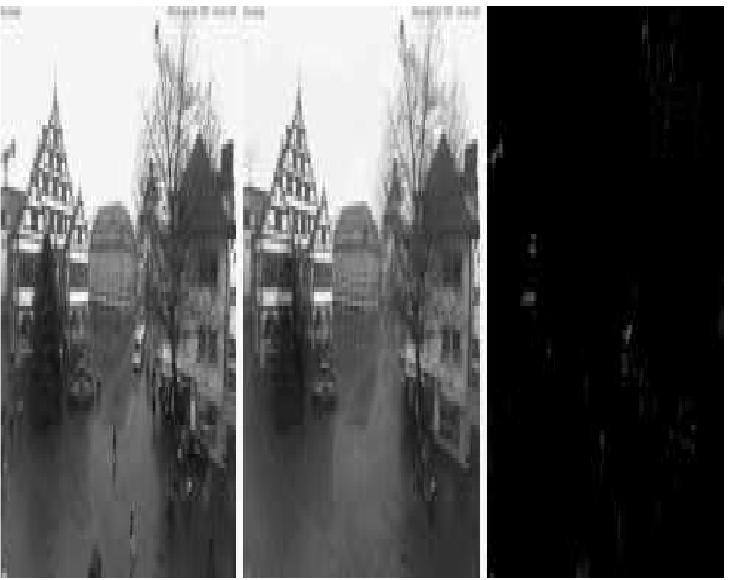}}
\end{minipage}
\begin{minipage}{2.7in}
\centerline{\includegraphics[width=2.4in,height=0.6in]{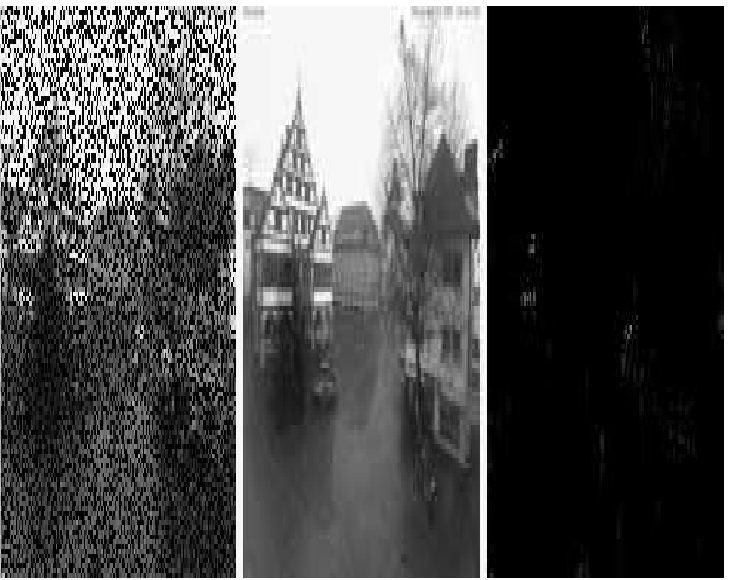}}
\end{minipage}
}
% words
\vspace{4mm}
\centerline{
\begin{minipage}{2.7in}
{\hspace{10mm} $\mb M_0$\hspace{17mm}$\hat{\mb L}$\hspace{18mm}$\hat{\mb S}$}
\end{minipage}
\begin{minipage}{2.7in}
{\hspace{8mm}$\PO[\mb M_0]$\hspace{13mm}$\hat{\mb L}$\hspace{17mm}$\hat{\mb S}$}
\end{minipage}
}
% description
\caption{\textbf{Surveillance videos.} The videos from top to bottom are respectively Lobby, Campus, Escalator, Mall, Restaurant, Hall, Airport and Square. The left panel presents videos with full observation ($\rho=1$) and the right one presents videos with partial observation ($\rho = 0.6$). Visually, the low-rank component successfully recovers the background and the sparse one captures the moving objects (e.g. vehicles, pedestrians) in the foreground.}
\label{fig:video}
\end{figure}

\begin{table}
\centering
\caption{\textbf{Comparisons of FW-T, ISTA and FISTA on surveillance video data.} The advantage of our FW-T method becomes prominent when the data are at large scale and compressed (i.e. the small $\rho$ scenarios).}
{\tabulinesep = .07in
\begin{tabu}{l |[1pt] c |[1pt] c c c c c c}
\hline
&& \multicolumn{2}{c}{FW-T} & \multicolumn{2}{c}{ISTA} & \multicolumn{2}{c}{FISTA} \\
Data & $\rho$ &  iter. & time  & iter. & time  & iter. & time  \\
\hline
\hline
Lobby                                & $1.0$ & 96  & {1.94e+02} & 144 & 3.64e+02  & 41  & \textbf{1.60e+02} \\
{\footnotesize ($20480\times 1000$)} & $0.8$ & 104 & \textbf{2.33e+02} & 216 & 1.03e+03 & 52  & 3.55e+02 \\
                                     & $0.6$ & 133 & \textbf{3.12e+02} & 380 & 1.67e+03 & 74  & 5.10e+02 \\
%                          & $0.4$ & 158 & \textbf{368} & 812 & 4310 & 113 & 791 \\
%                          & $0.2$ & 211 & \textbf{455} & --  & --   & 208 & 3490 \\
\hline
\hline
Campus                               & $1.0$ & 45 &\textbf{1.56e+02} & 78&1.49e+03 &23 & 4.63e+02 \\
{\footnotesize $(20480\times 1439)$} & $0.8$ & 44 &\textbf{1.57e+02} & 122& 2.34e+03 &30 & 6.45e+02 \\
                                     & $0.6$ & 41 &\textbf{1.39e+02} & 218&4.27e+03 &43 &1.08e+03 \\
                          % & $0.4$ & 0 &0 & 0 &0 &0 & 0 \\
                          % & $0.2$ & 0 &0 & 0 &0 &0 & 0 \\
\hline
\hline
Escalator                          & $1.0$ & 81 &\textbf{7.40e+02} & 58  & 4.19e+03 &25 & 2.18e+03 \\
{\footnotesize $(20800\times 3417)$} & $0.8$ & 80 &\textbf{7.35e+02} & 90  & 8.18e+03 &32 & 3.46e+03 \\
                                   & $0.6$ & 82 &\textbf{7.68e+02} & 162 & 1.83e+04 &43 & 5.73e+03 \\
                          %& $0.4$ & 0 &0 & 0 &0 &0 & 0 \\
                          %& $0.2$ & 0 &0 & 0 &0 &0 & 0 \\
\hline
\hline
Mall                                  & $1.0$ & 38 &\textbf{4.70e+02} & 110 &5.03e+03 &35 & 1.73e+03 \\
{\footnotesize $(81920 \times 1286)$} & $0.8$ & 35 &\textbf{4.58e+02} & 171 &7.32e+03 &44 & 2.34e+03 \\
                                      & $0.6$ & 44 &\textbf{5.09e+02} & 308 &1.31e+04 &62 & 3.42e+03 \\
                          %& $0.4$ & 0 &0 & 0 &0 &0 & 0 \\
                          %& $0.2$ & 0 &0 & 0 &0 &0 & 0 \\
\hline
\hline
Restaurant                            & $1.0$ & 70 & \textbf{5.44e+02} & 52  & 3.01e+03 & 20 & 1.63e+03 \\
{\footnotesize $(19200\times 3055)$}  & $0.8$ & 74 & \textbf{5.51e+02} & 81  & 4.84e+03 & 26 & 1.82e+03 \\
                                      & $0.6$ & 76 & \textbf{5.73e+02} & 144 & 9.93e+03 & 38 & 3.31e+03 \\
                                   % & $0.4$ & 0 &0 & 0 &0 &0 & 0 \\
                                   % & $0.2$ & 0 &0 & 0 &0 &0 & 0 \\
\hline
\hline
Hall                                 & $1.0$ & 60 &\textbf{6.33e+02} & 52  & 2.98e+03 & 21 & 1.39e+03 \\
{\footnotesize $(25344\times 3584)$} & $0.8$ & 62 &\textbf{6.52e+02} & 81  & 6.45e+03 & 28 & 2.90e+03 \\
                                     & $0.6$ & 70 &\textbf{7.43e+02} & 144 & 1.42e+04 & 39 & 4.94e+03 \\
                          %& $0.4$ & 0 &0 & 0 &0 &0 & 0 \\
                          %& $0.2$ & 0 &0 & 0 &0 &0 & 0 \\
\hline
\hline
Airport                                & $1.0$ & 130 & \textbf{6.42e+03} & 29 & 2.37e+04 & 14 & 1.37e+04  \\
{\footnotesize $(25344 \times 15730)$} & $0.8$ & 136 & \textbf{6.65e+03} & 45 & 6.92e+04 & 18 & 4.27e+04 \\
                                       & $0.6$ & 154 & \textbf{7.72e+03} & 77 & 1.78e+05 & 24 & 7.32e+04 \\
                          % & $0.4$ & 0 &0 & 0 &0 &0 & 0 \\
                          % & $0.2$ & 0 &0 & 0 &0 &0 & 0 \\
\hline
\hline
Square                                & $1.0$ & 179 & \textbf{1.24e+04} & 29 & 3.15e+04 & 13 & 1.51e+04 \\
{\footnotesize $(19200\times 28181)$} & $0.8$ & 181 & \textbf{1.26e+04} & 44 & 1.04e+05 & 17 & 6.03e+04 \\
                                      & $0.6$ & 191 & \textbf{1.31e+04} & 78 & 2.63e+05 & 22 & 9.88e+05 \\
                          %& $0.8$ & 0 &0 & 0 &0 &0 & 0 \\
                          %& $1.0$ & 0 &0 & 0 &0 &0 & 0 \\
\hline
\end{tabu}}
\label{tab:surv}
\end{table}

\begin{table}
\centering
\caption{\textbf{Comparisons of FW-T, ISTA and FISTA on YaleB face data.} The number of frames, 65, is relatively small for this application. This disables the FW-T method to significantly benefit from its linear per-iteration cost and consequently the FISTA method consistently has a better performance.
%Consequently, the FW-T method becomes better only when the sampling ratio is quite low.
}
{\tabulinesep = .055in
\begin{tabu}{ l |[1pt] c |[1pt] c c c c c c}
\hline
&& \multicolumn{2}{c}{FW-T} & \multicolumn{2}{c}{ISTA} & \multicolumn{2}{c}{FISTA} \\
Data & $\rho$ &  iter. & time  & iter. & time  & iter. & time  \\
\hline
\hline
YaleB01 & $1.0$ & 65 & 34.0 & 49  & 21.4 & 17  & \textbf{8.69} \\
{\footnotesize $(32256 \times 65)$}        & $0.9$ & 68 & 35.6 & 59  & 23.9 & 19  & \textbf{8.62} \\
        & $0.8$ & 79 & 42.2 & 76  & 35.3 & 22  & \textbf{10.9} \\
        & $0.7$ & 76 & 39.9 & 97  & 44.0 & 25  & \textbf{11.1} \\
        & $0.6$ & 71 & 37.5 & 127 & 50.2 & 29  & \textbf{12.9} \\
        & $0.5$ & 80 & 40.5 & 182 & 77.9 & 35  & \textbf{15.2} \\
        %& $0.4$ & 70 & 35.7 & 259 & 107  & 42  & \textbf{18.3} \\
        %& $0.3$ & 74 & 36.3 & 428 & 178  & 55  & \textbf{23.0} \\
        %& $0.2$ & 63 & 31.4 & 763 & 307  & 74  & \textbf{29.7} \\
        %& $0.1$ & 32 & \textbf{16.1} & --  & --   & 102 & 43.5 \\
\hline
\hline
YaleB02 & $1.0$ & 64 & 34.6 & 51  & 19.2  & 18  & \textbf{7.31} \\
{\footnotesize $(32256 \times 65)$}        & $0.9$ & 64 & 26.8 & 61  & 22.6  & 20  & \textbf{7.32} \\
        & $0.8$ & 71 & 33.9 & 78  & 27.7  & 22  & \textbf{8.61} \\
        & $0.7$ & 71 & 31.3 & 99  & 36.6  & 26  & \textbf{11.0} \\
        & $0.6$ & 73 & 36.6 & 132 & 53.7  & 30  & \textbf{12.4} \\
        & $0.5$ & 63 & 28.0 & 177 & 64.6  & 35  & \textbf{13.4} \\
%        & $0.4$ & 66 & 29.2 & 267 & 92.9  & 45  & \textbf{16.3} \\
%        & $0.3$ & 70 & 27.2 & 445 & 155   & 56  & \textbf{19.1} \\
%        & $0.2$ & 62 & \textbf{25.6} & 837 & 270   & 78  & 27.5 \\
%        & $0.1$ & 22 & \textbf{9.37} & --  & --   & 106  & 38.1 \\
\hline
\hline
YaleB03 & $1.0$ & 62 & 26.0 & 49  & 16.6 & 18  & \textbf{6.00} \\
{\footnotesize $(32256 \times 65)$}        & $0.9$ & 71 & 27.5 & 62  & 20.3 & 20  & \textbf{6.43} \\
        & $0.8$ & 69 & 30.0 & 78  & 26.0 & 22  & \textbf{8.32} \\
        & $0.7$ & 78 & 31.5 & 101 & 32.9 & 26  & \textbf{9.00} \\
        & $0.6$ & 73 & 28.7 & 132 & 40.4 & 30  & \textbf{10.6} \\
        & $0.5$ & 70 & 28.0 & 181 & 60.3 & 36  & \textbf{12.8} \\
%        & $0.4$ & 68 & 34.2 & 266 & 94.8 & 44  & \textbf{16.8} \\
%        & $0.3$ & 62 & 26.8 & 424 & 144  & 55  & \textbf{20.6} \\
%        & $0.2$ & 60 & \textbf{25.3} & 789 & 290  & 75  & 28.1 \\
%        & $0.1$ & 31 & \textbf{11.8} & --  & --   & 101 & 38.6 \\
\hline
\hline
YaleB04 & $1.0$ & 63 & 28.5 & 47  & 16.6 & 17  & \textbf{6.35} \\
{\footnotesize $(32256 \times 65)$} & $0.9$ & 67 & 28.7 & 58  & 23.1 & 19  & \textbf{7.98} \\
        & $0.8$ & 68 & 31.7 & 72  & 26.3 & 23  & \textbf{9.39} \\
        & $0.7$ & 69 & 30.7 & 92  & 35.9 & 26  & \textbf{9.84} \\
        & $0.6$ & 71 & 29.4 & 124 & 40.0 & 29  & \textbf{10.1} \\
        & $0.5$ & 74 & 29.4 & 174 & 67.3 & 36  & \textbf{14.3} \\
 %       & $0.4$ & 71 & 29.0 & 255 & 88.5 & 42  & \textbf{14.9} \\
%        & $0.3$ & 71 & 30.2 & 417 & 157  & 54  & \textbf{22.0} \\
%        & $0.2$ & 67 & 31.0 & 760 & 272  & 74  & \textbf{29.6} \\
%        & $0.1$ & 40 & \textbf{15.7} & --  & --   & 99  & 43.7 \\
\hline
\end{tabu}}
\label{tab:face}
\end{table}

\subsection{Shadow and specularity removal from face images}
Images taken under varying illumination can also be modeled as the superposition of low-rank and sparse components. Here, the data matrix $\mb M_0$ is again formed by stacking each image as a column. The low-rank term $\mb L_0$ captures the smooth variations \cite{basri2003lambertian}, while  the sparse term $\mb S_0$ represents cast shadows and specularities \cite{wright2009robust,Zhang2013-ICCV}. CPCP can be used to remove the shadows and specularities \cite{candes2011robust,Zhang2013-ICCV}. Here, we solved problem \eqref{eqn:penalized_prob} for YaleB face images \cite{georghiades2001few}. %, and images rendered from 3D
%triangulated face models from \cite{Bosphorus}.
Table \ref{tab:face} summarizes the numerical performances of FW-T, ISTA and FISTA.
%where ``--'' indicates that the number of iterations exceeded the cut-off of 1000.
%As the number of frames is relatively small for this application, the FW-T method is better only when the sampling ratio is quite low.
Similar to the observation made regarding the above surveillance video experiment, the number of iterations required by ISTA and FISTA grows much faster than it does for the FW-T method when $\rho$ decreases. However, unlike in those tests, where the number of frames in each dataset was at least several thousand, the number of frames here is just $65$. This prevents the FW-T method from significantly benefiting from its linear per-iteration cost and consequently, while FW-T still outperforms ISTA for values of $\rho \le 0.7$, the FISTA method is always the fastest.
In Figure \ref{fig:face}, the original images, the low-rank and the sparse parts produced by the FW-T method are presented. Visually, the recovered low-rank component is smoother and better conditioned for face recognition than the original image, while the sparse component corresponds to shadows and specularities.
\begin{figure}[htb!]
\centerline{
\begin{minipage}{2.7in}
\centerline{\includegraphics[width=2.4in,height=0.8in]{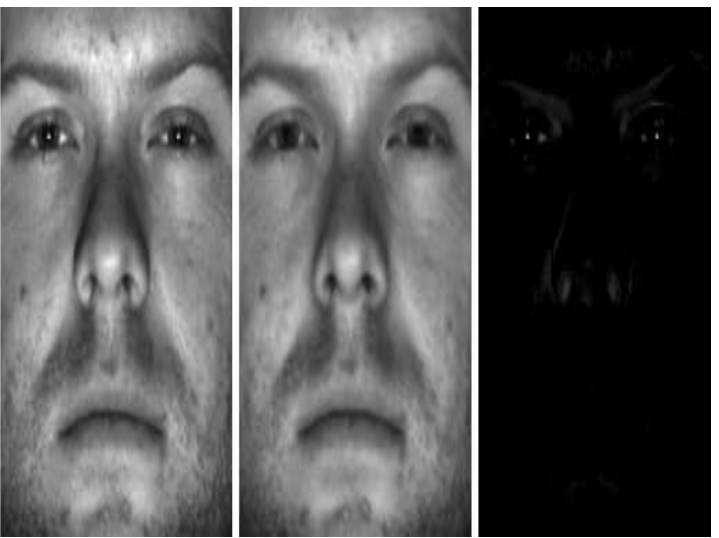}}
\end{minipage}
\begin{minipage}{2.7in}
\centerline{\includegraphics[width=2.4in,height=0.8in]{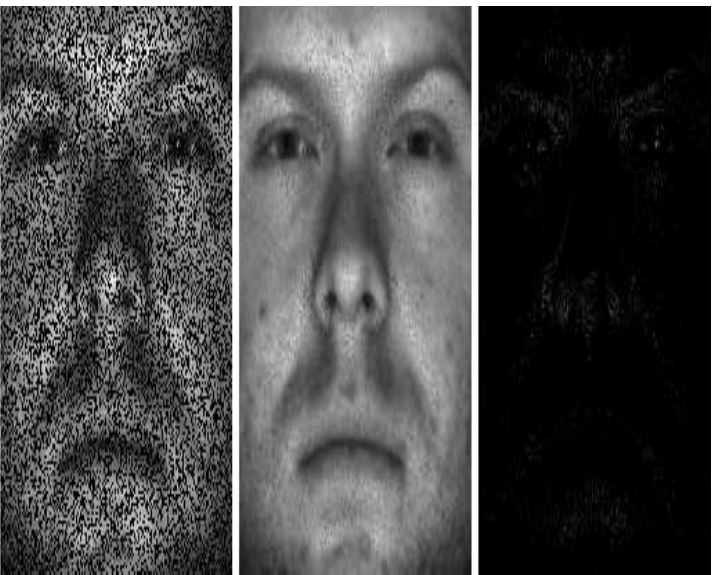}}
\end{minipage}
}
\vspace{2mm}
\centerline{
\begin{minipage}{2.7in}
\centerline{\includegraphics[width=2.4in,height=0.8in]{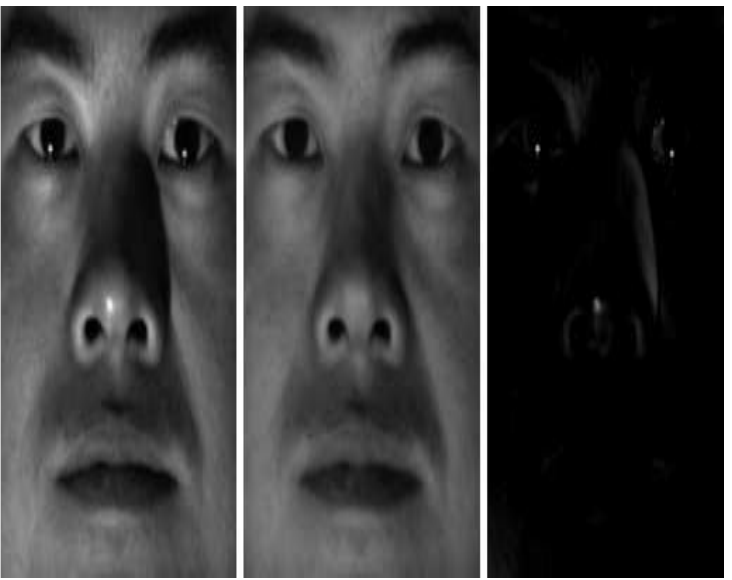}}
\end{minipage}
\begin{minipage}{2.7in}
\centerline{\includegraphics[width=2.4in,height=0.8in]{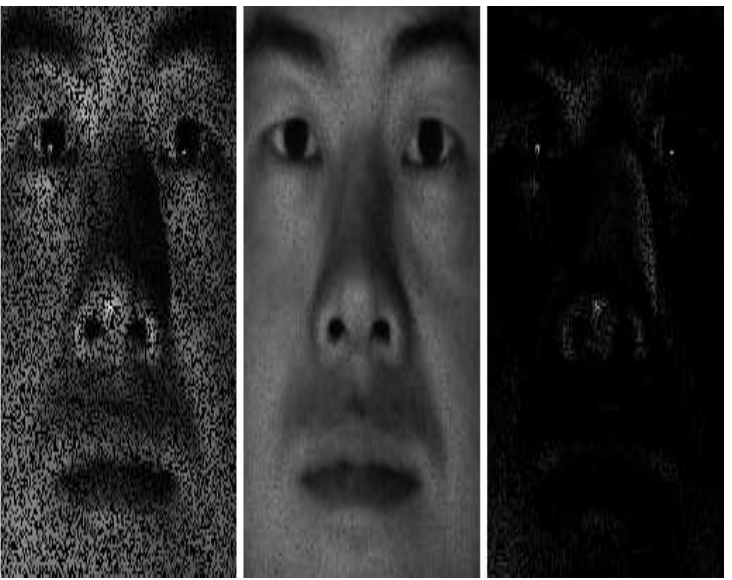}}
\end{minipage}
}
\vspace{2mm}
\centerline{
\begin{minipage}{2.7in}
\centerline{\includegraphics[width=2.4in,height=0.8in]{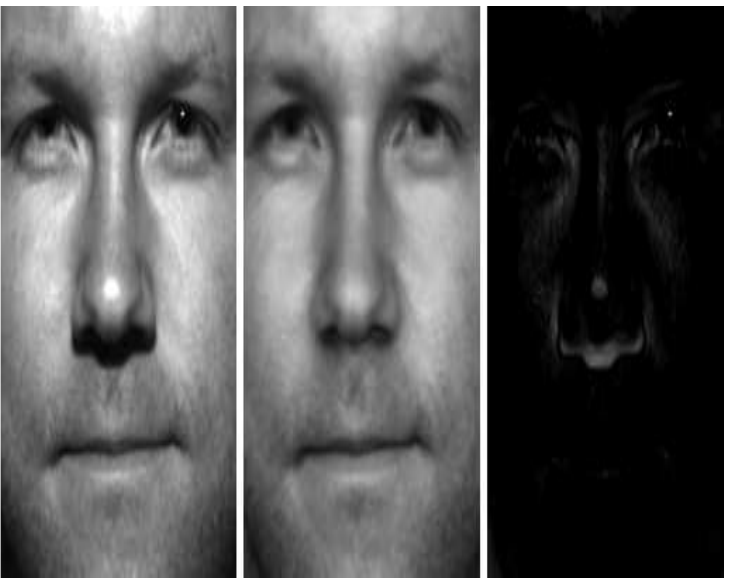}}
\end{minipage}
\begin{minipage}{2.7in}
\centerline{\includegraphics[width=2.4in,height=0.8in]{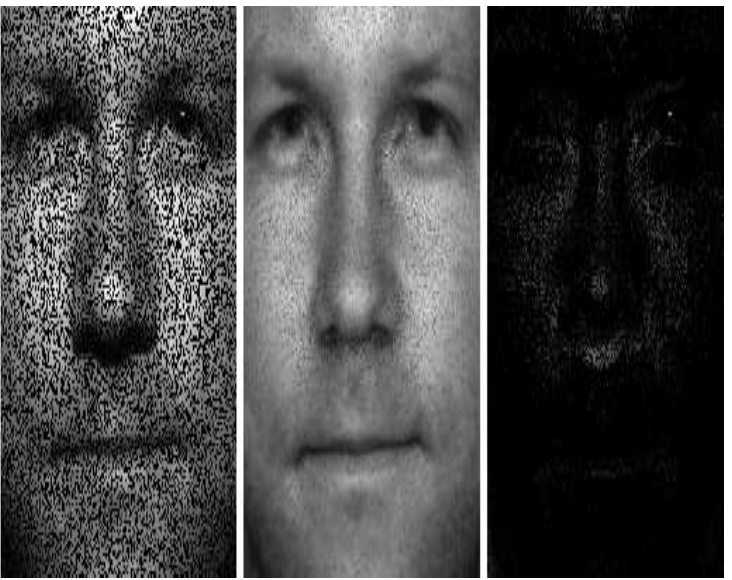}}
\end{minipage}
}
\vspace{2mm}
\centerline{
\begin{minipage}{2.7in}
\centerline{\includegraphics[width=2.4in,height=0.8in]{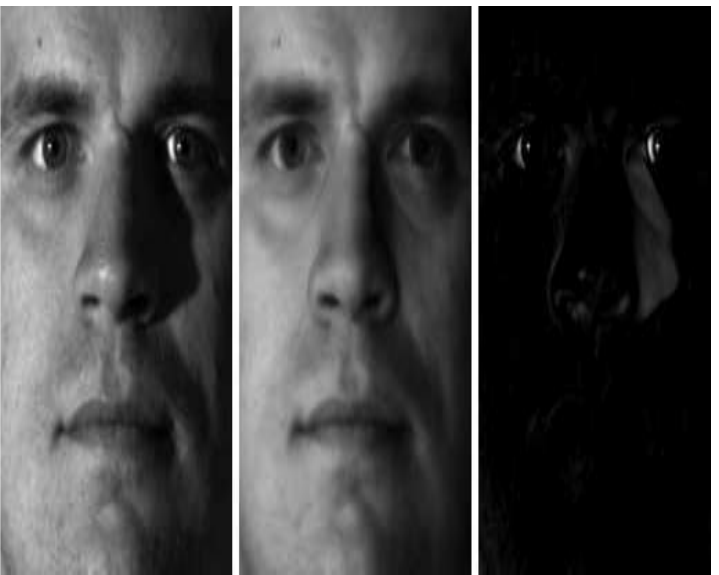}}
\end{minipage}
\begin{minipage}{2.7in}
\centerline{\includegraphics[width=2.4in,height=0.8in]{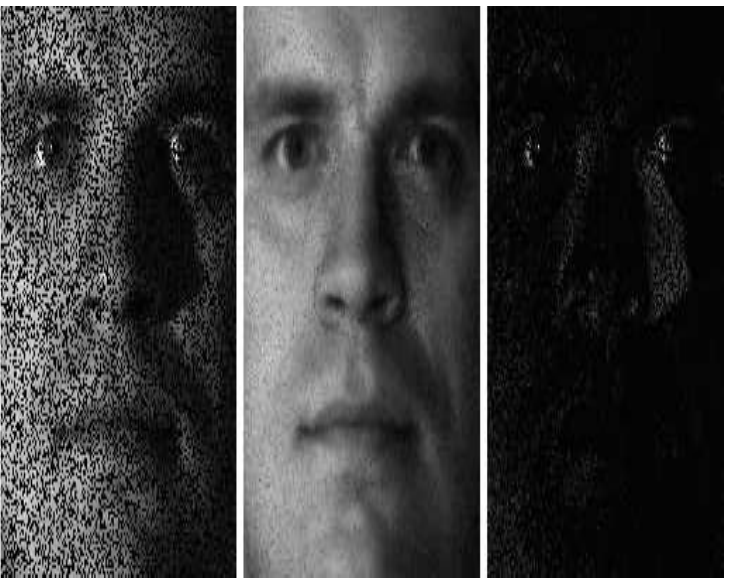}}
\end{minipage}
}
\vspace{4mm}
\centerline{
\begin{minipage}{2.7in}
{\hspace{10mm} $\mb M_0$\hspace{17mm}$\hat{\mb L}$\hspace{18mm}$\hat{\mb S}$}
\end{minipage}
\begin{minipage}{2.7in}
{\hspace{8mm}$\PO[\mb M_0]$\hspace{13mm}$\hat{\mb L}$\hspace{17mm}$\hat{\mb S}$}
\end{minipage}
}
\caption{\textbf{Face images.} The pictures from top to bottom are respectively YaleB01, YaleB02, YaleB03 and  YaleB04 face images. The left panel presents the case with full observation ($\rho = 1$), while the right panel presents the case with partial observation ($\rho = 0.6$). Visually, the recovered low-rank component is smoother and better conditioned for face recognition than the original image, while the sparse component corresponds to shadows and specularities.}
\label{fig:face}
\end{figure}

\section{Discussion} \label{sec:conclusion}
In this paper, we have proposed scalable algorithms called Frank-Wolfe-Projection (FW-P) and Frank-Wolfe-Thresholding (FW-T) for norm constrained and penalized versions of CPCP. Essentially, these methods combine classical ideas in Frank-Wolfe and Proximal methods to achieve linear per-iteration cost, $O(1/k)$ convergence in function value and practical efficiency in updating the sparse component. Extensive numerical experiments were conducted on computer vision related applications of CPCP, which demonstrated the great potential of our methods for dealing with problems of very large scale. Moreover, the general idea of leveraging different methods to deal with different functions may be valuable for other demixing problems.

We are also aware that though our algorithms are extremely efficient in the beginning iterations and quickly arrive at an approximate solution of practical significance, they become less competitive in solutions of very high accuracy, due to the nature of Frank-Wolfe. This suggests further hybridization under our framework (e.g. using nonconvex approaches to handle the nuclear norm) might be utilized in certain applications (see \cite{laue2012hybrid} for research in that direction).

\section*{Acknowledgements} We are grateful to the associate editor Chen Greif and three anonymous reviewers for their helpful suggestions and comments that substantially improve the paper.

\bibliographystyle{ieeetr}
\bibliography{refs}

\end{document}